\documentclass[9pt]{amsart}
\usepackage{amsmath}
\usepackage{amsxtra}
\usepackage{amscd}
\usepackage{amsthm}
\usepackage{amsfonts}
\usepackage{amssymb}
\usepackage{eucal}
\usepackage[all]{xy}
\usepackage{graphicx}
\usepackage[hidelinks]{hyperref}

\newtheorem{cor}[subsubsection]{Corollary}
\newtheorem{lem}[subsubsection]{Lemma}
\newtheorem{prop}[subsubsection]{Proposition}

\newtheorem{conj}[subsubsection]{Conjecture}
\newtheorem{thm}[subsubsection]{Theorem}

\theoremstyle{definition}

\theoremstyle{remark}
\newtheorem{rem}[subsubsection]{Remark}

\newcommand{\thmref}[1]{Theorem~\ref{#1}}

\newcommand{\secref}[1]{Sect.~\ref{#1}}
\newcommand{\lemref}[1]{Lemma~\ref{#1}}
\newcommand{\propref}[1]{Proposition~\ref{#1}}
\newcommand{\corref}[1]{Corollary~\ref{#1}}
\newcommand{\conjref}[1]{Conjecture~\ref{#1}}

\numberwithin{equation}{section}

\newcommand{\nc}{\newcommand}
\nc{\renc}{\renewcommand}
\nc{\ssec}{\subsection}
\nc{\sssec}{\subsubsection}
\nc{\on}{\operatorname}

\nc\ol{\overline}
\nc\wt{\widetilde}
\nc\tboxtimes{\wt{\boxtimes}}
\nc\tstar{\wt{\star}}
\nc{\alp}{\alpha}

\nc{\ZZ}{{\mathbb Z}}
\nc{\NN}{{\mathbb N}}
\nc{\OO}{{\mathbb O}}
\renc{\SS}{{\mathbb S}}
\nc{\DD}{{\mathbb D}}
\nc{\GG}{{\mathbb G}}

\nc{\Fq}{{\mathbb F}_q}
\nc{\Fqb}{\ol{{\mathbb F}_q}}
\nc{\Ql}{\ol{{\mathbb Q}_\ell}}
\nc{\id}{\text{id}}
\nc\X{\mathcal X}

\nc{\Hom}{\on{Hom}}
\nc{\Lie}{\on{Lie}}
\nc{\Loc}{\on{Loc}}
\nc{\Pic}{\on{Pic}}
\nc{\Bun}{\on{Bun}}
\nc{\IC}{\on{IC}}
\nc{\Aut}{\on{Aut}}
\nc{\rk}{\on{rk}}
\nc{\Sh}{\on{Sh}}
\nc{\Perv}{\on{Perv}}
\nc{\pos}{{\on{pos}}}
\nc{\Conv}{\on{Conv}}
\nc{\Sph}{\on{Sph}}
\nc{\Sym}{\on{Sym}}
\nc{\BunBb}{\overline{\Bun}_B}
\nc{\BunNb}{\overline{\Bun}_N}
\nc{\BunTb}{\overline{\Bun}_T}
\nc{\BunBbm}{\overline{\Bun}_{B^-}}
\nc{\BunBbel}{\overline{\Bun}_{B,el}}
\nc{\BunBbmel}{\overline{\Bun}_{B^-,el}}
\nc{\Buno}{\overset{o}{\Bun}}
\nc{\BunPb}{{\overline{\Bun}_P}}
\nc{\BunBM}{\Bun_{B(M)}}
\nc{\BunBMb}{\overline{\Bun}_{B(M)}}
\nc{\BunPbw}{{\widetilde{\Bun}_P}}
\nc{\BunBP}{\widetilde{\Bun}_{B,P}}
\nc{\GUb}{\overline{G/U}}
\nc{\GUPb}{\overline{G/U(P)}}

\nc{\Hhom}{\underline{\on{Hom}}}
\nc\syminfty{\on{Sym}^{\infty}}
\nc\lal{\ol{\lambda}}
\nc\xl{\ol{x}}
\nc\thl{\ol{\theta}}
\nc\nul{\ol{\nu}}
\nc\mul{\ol{\mu}}
\nc\Sum\Sigma
\nc{\oX}{\overset{o}{X}{}}
\nc{\hl}{\overset{\leftarrow}h{}}
\nc{\hr}{\overset{\rightarrow}h{}}
\nc{\M}{{\mathcal M}}
\nc{\N}{{\mathcal N}}
\nc{\F}{{\mathcal F}}
\nc{\D}{{\mathcal D}}
\nc{\Q}{{\mathcal Q}}
\nc{\Y}{{\mathcal Y}}
\nc{\G}{{\mathcal G}}
\nc{\E}{{\mathcal E}}
\nc{\CalC}{{\mathcal C}}
\nc\Dh{\widehat{\D}}

\nc{\C}{{\mathcal C}}
\nc{\K}{{\mathcal K}}
\renewcommand{\H}{{\mathcal H}}

\nc{\T}{{\mathcal T}}
\nc{\V}{{\mathcal V}}
\renc{\P}{{\mathcal P}}
\nc{\A}{{\mathcal A}}
\nc{\B}{{\mathcal B}}
\nc{\U}{{\mathcal U}}

\nc{\Gr}{{\on{Gr}}}

\nc{\frn}{{\check{\mathfrak u}(P)}}

\nc{\fC}{\mathfrak C}
\nc{\p}{\mathfrak p}
\nc{\q}{\mathfrak q}
\nc\f{{\mathfrak f}}

\nc{\qo}{{\mathfrak q}}
\nc{\po}{{\mathfrak p}}
\nc{\s}{{\mathfrak s}}
\nc\w{\text{w}}

\nc\mathi\iota
\nc\Spec{\on{Spec}}
\nc\Mod{\on{Mod}}
\nc{\tw}{\widetilde{\mathfrak t}}
\nc{\pw}{\widetilde{\mathfrak p}}
\nc{\qw}{\widetilde{\mathfrak q}}
\nc{\jw}{\widetilde j}

\nc{\grb}{\overline{\Gr}}
\nc{\I}{\mathcal I}

\nc{\lambdach}{{\check\lambda}}
\nc{\Lambdach}{{\check\Lambda}{}}
\nc{\much}{{\check\mu}}
\nc{\omegach}{{\check\omega}}
\nc{\nuch}{{\check\nu}}
\nc{\etach}{{\check\eta}}
\nc{\alphach}{{\check\alpha}}
\nc{\oblvtach}{{\check\oblvta}}
\nc{\rhoch}{{\check\rho}}
\nc{\ch}{{\check h}}

\nc{\Hb}{\overline{\H}}

\nc{\BA}{{\mathbb{A}}}
\nc{\BC}{{\mathbb{C}}}
\nc{\BE}{{\mathbb{E}}}
\nc{\BF}{{\mathbb{F}}}
\nc{\BG}{{\mathbb{G}}}
\nc{\BM}{{\mathbb{M}}}
\nc{\BO}{{\mathbb{O}}}
\nc{\BD}{{\mathbb{D}}}
\nc{\BL}{{\mathbb{L}}}
\nc{\Bl}{{\mathbb{l}}}
\nc{\BN}{{\mathbb{N}}}
\nc{\BP}{{\mathbb{P}}}
\nc{\BQ}{{\mathbb{Q}}}
\nc{\BR}{{\mathbb{R}}}
\nc{\BZ}{{\mathbb{Z}}}
\nc{\BS}{{\mathbb{S}}}
\nc{\BT}{{\mathbb{T}}}

\nc{\CA}{{\mathcal{A}}}
\nc{\CB}{{\mathcal{B}}}

\nc{\CE}{{\mathcal{E}}}
\nc{\CF}{{\mathcal{F}}}
\nc{\CH}{{\mathcal{H}}}

\nc{\CL}{{\mathcal{L}}}
\nc{\CC}{{\mathcal{C}}}
\nc{\CG}{{\mathcal{G}}}
\nc{\CM}{{\mathcal{M}}}
\nc{\CN}{{\mathcal{N}}}
\nc{\CK}{{\mathcal{K}}}
\nc{\CO}{{\mathcal{O}}}
\nc{\CP}{{\mathcal{P}}}
\nc{\CQ}{{\mathcal{Q}}}
\nc{\CR}{{\mathcal{R}}}
\nc{\CS}{{\mathcal{S}}}
\nc{\CT}{{\mathcal{T}}}
\nc{\CU}{{\mathcal{U}}}
\nc{\CV}{{\mathcal{V}}}
\nc{\CW}{{\mathcal{W}}}
\nc{\CX}{{\mathcal{X}}}
\nc{\CY}{{\mathcal{Y}}}
\nc{\CZ}{{\mathcal{Z}}}
\nc{\CI}{{\mathcal{I}}}
\nc{\cD}{{\mathcal{D}}}
\nc{\ocD}{\overset{\circ}{\mathcal{D}}}

\nc{\csM}{{\check{\mathcal A}}{}}
\nc{\oM}{{\overset{\circ}{\mathcal M}}{}}
\nc{\obM}{{\overset{\circ}{\mathbf M}}{}}
\nc{\oCA}{{\overset{\circ}{\mathcal A}}{}}
\nc{\obA}{{\overset{\circ}{\mathbf A}}{}}
\nc{\ooM}{{\overset{\circ}{M}}{}}
\nc{\osM}{{\overset{\circ}{\mathsf M}}{}}
\nc{\vM}{{\overset{\bullet}{\mathcal M}}{}}
\nc{\nM}{{\underset{\bullet}{\mathcal M}}{}}
\nc{\oD}{{\overset{\circ}{\mathcal D}}{}}
\nc{\obD}{{\overset{\circ}{\mathbf D}}{}}
\nc{\oA}{{\overset{\circ}{\mathbb A}}{}}
\nc{\op}{{\overset{\bullet}{\mathbf p}}{}}
\nc{\cp}{{\overset{\circ}{\mathbf p}}{}}
\nc{\oU}{{\overset{\bullet}{\mathcal U}}{}}
\nc{\oZ}{{\overset{\circ}{\mathcal Z}}{}}
\nc{\ofZ}{{\overset{\circ}{\mathfrak Z}}{}}
\nc{\oF}{{\overset{\circ}{\fF}}}

\nc{\fa}{{\mathfrak{a}}}
\nc{\fb}{{\mathfrak{b}}}
\nc{\fc}{{\mathfrak{c}}}
\nc{\fch}{{\mathfrak{ch}}}
\nc{\fd}{{\mathfrak{d}}}
\nc{\ff}{{\mathfrak{f}}}
\nc{\fg}{{\mathfrak{g}}}
\nc{\fgl}{{\mathfrak{gl}}}
\nc{\fh}{{\mathfrak{h}}}
\nc{\fj}{{\mathfrak{j}}}
\nc{\fl}{{\mathfrak{l}}}
\nc{\fm}{{\mathfrak{m}}}
\nc{\fn}{{\mathfrak{n}}}
\nc{\fu}{{\mathfrak{u}}}
\nc{\fp}{{\mathfrak{p}}}
\nc{\fr}{{\mathfrak{r}}}
\nc{\fs}{{\mathfrak{s}}}
\nc{\ft}{{\mathfrak{t}}}
\nc{\fT}{{\mathfrak{T}}}
\nc{\fz}{{\mathfrak{z}}}
\nc{\fsl}{{\mathfrak{sl}}}
\nc{\hsl}{{\widehat{\mathfrak{sl}}}}
\nc{\hgl}{{\widehat{\mathfrak{gl}}}}
\nc{\hg}{{\widehat{\mathfrak{g}}}}
\nc{\htt}{{\widehat{\mathfrak{t}}}}
\nc{\chg}{{\widehat{\mathfrak{g}}}{}^\vee}
\nc{\hn}{{\widehat{\mathfrak{n}}}}
\nc{\chn}{{\widehat{\mathfrak{n}}}{}^\vee}

\nc{\fA}{{\mathfrak{A}}}
\nc{\fB}{{\mathfrak{B}}}
\nc{\fD}{{\mathfrak{D}}}
\nc{\fE}{{\mathfrak{E}}}
\nc{\fF}{{\mathfrak{F}}}
\nc{\fG}{{\mathfrak{G}}}
\nc{\fK}{{\mathfrak{K}}}
\nc{\fL}{{\mathfrak{L}}}
\nc{\fM}{{\mathfrak{M}}}
\nc{\fN}{{\mathfrak{N}}}
\nc{\fP}{{\mathfrak{P}}}
\nc{\fU}{{\mathfrak{U}}}
\nc{\fV}{{\mathfrak{V}}}
\nc{\fZ}{{\mathfrak{Z}}}

\nc{\bb}{{\mathbf{b}}}
\nc{\bc}{{\mathbf{c}}}
\nc{\bd}{{\mathbf{d}}}
\nc{\bbf}{{\mathbf{f}}}
\nc{\be}{{\mathbf{e}}}
\nc{\bg}{{\mathbf{g}}}
\nc{\bi}{{\mathbf{i}}}
\nc{\bj}{{\mathbf{j}}}
\nc{\bn}{{\mathbf{n}}}
\nc{\bp}{{\mathbf{p}}}
\nc{\bq}{{\mathbf{q}}}
\nc{\bu}{{\mathbf{u}}}
\nc{\bv}{{\mathbf{v}}}
\nc{\bx}{{\mathbf{x}}}
\nc{\bs}{{\mathbf{s}}}
\nc{\by}{{\mathbf{y}}}
\nc{\bw}{{\mathbf{w}}}
\nc{\bA}{{\mathbf{A}}}
\nc{\bK}{{\mathbf{K}}}
\nc{\bB}{{\mathbf{B}}}
\nc{\bC}{{\mathbf{C}}}
\nc{\bG}{{\mathbf{G}}}
\nc{\bD}{{\mathbf{D}}}
\nc{\bH}{{\mathbf{He}}}
\nc{\bM}{{\mathbf{M}}}
\nc{\bN}{{\mathbf{N}}}
\nc{\bO}{{\mathbf{O}}}
\nc{\bV}{{\mathbf{V}}}
\nc{\bW}{{\mathbf{Wh}}}
\nc{\bX}{{\mathbf{X}}}
\nc{\bY}{{\mathbf{Y}}}
\nc{\bZ}{{\mathbf{Z}}}
\nc{\bS}{{\mathbf{S}}}
\nc{\bT}{{\mathbf{T}}}

\nc{\sA}{{\mathsf{A}}}
\nc{\sB}{{\mathsf{B}}}
\nc{\sC}{{\mathsf{C}}}
\nc{\sD}{{\mathsf{D}}}
\nc{\sF}{{\mathsf{F}}}
\nc{\sG}{{\mathsf{G}}}
\nc{\sH}{{\mathsf{H}}}
\nc{\sK}{{\mathsf{K}}}
\nc{\sL}{{\mathsf{L}}}
\nc{\sM}{{\mathsf{M}}}
\nc{\sO}{{\mathsf{O}}}
\nc{\sR}{{\mathsf{R}}}
\nc{\sU}{{\mathsf{U}}}
\nc{\sW}{{\mathsf{W}}}
\nc{\sQ}{{\mathsf{Q}}}
\nc{\sP}{{\mathsf{P}}}
\nc{\sY}{{\mathsf{Y}}}
\nc{\sZ}{{\mathsf{Z}}}
\nc{\sfp}{{\mathsf{p}}}
\nc{\sfq}{{\mathsf{q}}}
\nc{\sr}{{\mathsf{r}}}
\nc{\sk}{{\mathsf{k}}}
\nc{\su}{{\mathsf{u}}}
\nc{\sv}{{\mathsf{v}}}
\nc{\sg}{{\mathsf{g}}}
\nc{\sff}{{\mathsf{f}}}
\nc{\sfb}{{\mathsf{b}}}
\nc{\sfc}{{\mathsf{c}}}
\nc{\sd}{{\mathsf{d}}}

\nc{\BK}{{\bar{K}}}

\nc{\tA}{{\widetilde{\mathbf{A}}}}
\nc{\tB}{{\widetilde{\mathcal{B}}}}
\nc{\tg}{{\widetilde{\mathfrak{g}}}}
\nc{\tG}{{\widetilde{G}}}
\nc{\TM}{{\widetilde{\mathbb{M}}}{}}
\nc{\tO}{{\widetilde{\mathsf{O}}}{}}
\nc{\tU}{{\widetilde{\mathfrak{U}}}{}}
\nc{\TZ}{{\tilde{Z}}}
\nc{\tx}{{\tilde{x}}}
\nc{\tbv}{{\tilde{\bv}}}
\nc{\tfP}{{\widetilde{\mathfrak{P}}}{}}
\nc{\tz}{{\tilde{\zeta}}}
\nc{\tmu}{{\tilde{\mu}}}

\nc{\urho}{\underline{\rho}}
\nc{\uB}{\underline{B}}
\nc{\uC}{{\underline{\mathbb{C}}}}
\nc{\ui}{\underline{i}}
\nc{\uj}{\underline{j}}
\nc{\ofP}{{\overline{\mathfrak{P}}}}
\nc{\oB}{{\overline{\mathcal{B}}}}
\nc{\og}{{\overline{\mathfrak{g}}}}
\nc{\oI}{{\overline{I}}}

\nc{\eps}{\varepsilon}
\nc{\hrho}{{\hat{\rho}}}

\nc{\one}{{\mathbf{1}}}
\nc{\two}{{\mathbf{t}}}

\nc{\Rep}{{\mathop{\operatorname{\rm Rep}}}}
\nc{\Tot}{{\mathop{\operatorname{\rm Tot}}}}
\nc{\Ker}{{\mathop{\operatorname{\rm Ker}}}}
\nc{\Hilb}{{\mathop{\operatorname{\rm Hilb}}}}
\nc{\End}{{\mathop{\operatorname{\rm End}}}}
\nc{\Ext}{{\mathop{\operatorname{\rm Ext}}}}
\nc{\CHom}{{\mathop{\operatorname{{\mathcal{H}}\it om}}}}
\nc{\GL}{{\mathop{\operatorname{\rm GL}}}}
\nc{\gr}{{\mathop{\operatorname{\rm gr}}}}
\nc{\Id}{{\mathop{\operatorname{\rm Id}}}}
\nc{\de}{{\mathop{\operatorname{\rm def}}}}
\nc{\length}{{\mathop{\operatorname{\rm length}}}}
\nc{\supp}{{\mathop{\operatorname{\rm supp}}}}

\nc{\Cliff}{{\mathsf{Cliff}}}
\nc{\Fl}{\on{Fl}}
\nc{\Fib}{{\mathsf{Fib}}}
\nc{\Coh}{{\on{Coh}}}
\nc{\QCoh}{{\on{QCoh}}}
\nc{\IndCoh}{{\on{IndCoh}}}
\nc{\FCoh}{{\mathsf{FCoh}}}

\nc{\reg}{{\text{\rm reg}}}

\nc{\cplus}{{\mathbf{C}_+}}
\nc{\cminus}{{\mathbf{C}_-}}
\nc{\cthree}{{\mathbf{C}_*}}
\nc{\Qbar}{{\bar{Q}}}
\nc\Eis{{\on{Eis}}}
\nc\Eisb{\ol\Eis{}}
\nc\Eisr{\on{Eis}^{rat}{}}
\nc\wh{\widehat}
\nc{\Def}{\on{Def_{\check{\fb}}(E)}}
\nc{\barZ}{\overline{Z}{}}
\nc{\barbarZ}{\overline{\barZ}{}}
\nc{\barpi}{\overline\pi}
\nc{\barbarpi}{\overline\barpi}
\nc{\barpip}{\overline\pi{}^+}
\nc{\barpim}{\overline\pi{}^-}

\nc{\fq}{\mathfrak q}

\nc{\fqb}{\ol{\fq}{}}
\nc{\fpb}{\ol{\fp}{}}
\nc{\fpr}{{\fp^{rat}}{}}
\nc{\fqr}{{\fq^{rat}}{}}

\nc{\hattimes}{\wh\otimes}

\nc{\bh}{{{\mathbf h}}}
\nc{\bk}{{{\mathbf k}}}
\nc{\bOmega}{{\overline{\Omega(\check \fn)}}}

\nc{\seq}[1]{\stackrel{#1}{\sim}}

\nc{\cT}{{\check{T}}}
\nc{\cG}{{\check{G}}}
\nc{\cM}{{\check{M}}}
\nc{\cB}{{\check{B}}}
\nc{\cP}{{\check{P}}}

\nc{\ct}{{\check{\mathfrak t}}}
\nc{\cg}{{\check{\fg}}}
\nc{\cb}{{\check{\fb}}}
\nc{\cn}{{\check{\fn}}}

\nc{\cLambda}{{\check\Lambda}}

\nc{\cla}{{\check\lambda}}
\nc{\cmu}{{\check\mu}}
\nc{\cnu}{{\check\nu}}
\nc{\ceta}{{\check\eta}}

\nc{\DefbE}{{\on{Def}_{\cB}(E_\cT)}}

\nc{\imathb}{{\ol{\imath}}}
\nc{\rlr}{\overset{\longrightarrow}{\underset{\longrightarrow}\longleftarrow}}

\nc{\oBun}{\overset{\circ}\Bun}
\nc{\oSht}{\overset{\circ}\Sht}
\nc{\LocSys}{\on{LocSys}}
\nc{\BunBbb}{\ol{\ol{Bun}}_B}
\nc{\BunBr}{\Bun_B^{rat}}
\nc{\BunBrp}{\Bun_B^{rat,polar}}
\nc{\BunTrp}{\Bun_T^{rat,polar}}
\nc{\BunNr}{\Bun_N^{rat}}
\nc{\BunNre}{\Bun_N^{enh,rat}}
\nc{\BunTr}{\Bun_T^{rat}}
\nc{\Vect}{\on{Vect}}
\nc{\Whit}{\on{Whit}}
\nc{\CTb}{\ol{\on{CT}}}
\nc{\Ran}{\on{Ran}}
\nc{\CTr}{\on{CT}^{rat}{}}
\nc\jmathr{\jmath^{rat}{}}
\nc{\ux}{\underline{x}}
\nc{\clambda}{{\check\lambda}}
\nc{\calpha}{{\check\alpha}}
\nc{\ind}{{\mathbf{ind}}}
\nc{\oblv}{{\mathbf{oblv}}}
\nc{\coeff}{\on{W-coeff}}
\nc{\Poinc}{\on{Poinc}}
\nc{\Dmod}{\on{D-mod}}
\nc{\dr}{\on{dR}}
\nc{\oCZ}{\overset{\circ}\CZ}
\nc{\KL}{\on{KL}}
\nc{\triv}{{\mathbf{triv}}}

\nc{\dgSch}{\on{DGSch}}
\nc{\Sch}{\on{Sch}}
\nc{\affdgSch}{\on{DGSch}^{\on{aff}}}
\nc{\affSch}{\on{Sch}^{\on{aff}}}
\nc{\Sing}{\on{Sing}}
\nc{\inftygroup}{\infty\on{-Grpd}}
\renc{\dr}{{\on{dr}}}
\nc\Maps{\on{Maps}}
\nc\Res{\on{Res}}
\nc\bMaps{\mathbf{Maps}}
\nc{\ul}{\underline}
\nc{\bNP}{\mathbf{N(P)}}
\nc{\ofc}{\overset{\circ}\fch}
\nc{\ppart}{(\!(t)\!)}
\nc{\qqart}{[\![t]\!]}
\nc{\crit}{\on{crit}}
\nc{\DGCat}{\on{DGCat}}
\nc{\Shv}{\on{Shv}}
\nc{\bDelta}{\mathbf{\Delta}}
\nc{\genB}{{\overset{\on{gen}}\to B}}
\nc{\genP}{{\underset{\on{gen}}\longrightarrow P}}
\nc{\genN}{{\underset{\on{gen}}\longrightarrow N}}
\nc{\semiinf}{{\frac{\infty}{2}+\bullet}}
\nc{\mmod}{\on{-}\mathbf{mod}}
\nc{\AdFr}{\on{Ad}_{\on{Frob}}}
\nc{\Frob}{\on{Frob}}
\nc{\Tr}{\on{Tr}}
\nc{\Sht}{\on{Sht}}
\nc{\sfe}{\mathsf{e}}
\nc{\tCat}{{2\on{-Cat}}}
\nc{\tIndCoh}{{2\on{-IndCoh}}}
\nc{\tQCoh}{{2\on{-QCoh}}}
\nc{\uShv}{\underline{\Shv}}
\nc{\qLisse}{\on{QLisse}}
\nc{\Nilp}{{\on{Nilp}}}
\nc{\sotimes}{\overset{!}\otimes} 
\nc{\AGCat}{\on{AGCat}}
\nc{\LS}{\on{LS}}
\nc{\Cat}{\on{-Cat}}
\nc{\bo}{\mathbf o}
\nc{\poinc}{\mathsf{poinc}}
\nc{\funct}{\mathsf{funct}}
\nc{\sFunct}{\mathsf{Funct}}

\begin{document}

\vskip1cm

\title[Tempered vs generic automorphic functions]{Tempered vs generic automorphic functions and \\
the canonical filtration on automorphic functions}

\author{Dennis Gaitsgory, Vincent Lafforgue and Sam Raskin} 

\begin{abstract}
We introduce and study the filtration on the space of automorphic functions (in the everywhere unramified situation for
the function field case) obtained by transferring the filtration on the spectral side of the classical Langlands conjecture,
induced by \emph{coherent singular support}. 

\medskip

We propose a number of conjectures that tie this filtration (which, by design, arises from the notion of \emph{cohomological support}) 
to a filtration on the space of $\BC$-valued automorphic functions that arises by considering the \emph{analytic} spectrum of Hecke
operators.

\end{abstract} 

\date{\today}

\maketitle

\tableofcontents

\section*{Introduction}

\ssec{What is this paper about?}

\sssec{}

Let $X$ be a smooth and complete curve and $G$ a reductive group over $\ol\BF_q$.
Let $\Bun_G$ denote the moduli stack of principle $G$-bundles on $X$. 

\medskip

The following version of the geometric Langlands conjecture was established in \cite{GR}:

\begin{equation} \label{e:GLC Intro}
\Shv_\Nilp(\Bun_G) \overset{\BL_G^{\on{restr}}}\simeq \IndCoh_\Nilp({}'\!\LS_\cG^{\on{restr}}),
\end{equation} 
where:

\medskip

\begin{itemize}

\item $\Shv_\Nilp(\Bun_G)\subset \Shv(\Bun_G)$ is the full subcategory consisting of objects whose
singular support is contained in the nilpotent cone (see \cite[Sect. 14.1.3]{AGKRRV1});

\medskip

\item $\cG$ is the Langlands dual of $G$, viewed as an algebraic group over $\ol\BQ_\ell$;

\medskip

\item $\LS_\cG^{\on{restr}}$ is the stack of $\cG$-local systems with \emph{resricted variation} on $X$
(see \cite[Sect. 1.4]{AGKRRV1}); it is a prestack locally of finite type over $\ol\BQ_\ell$

\medskip

\item $\IndCoh_{\Nilp}(\LS_\cG^{\on{restr}})\subset \IndCoh(\LS_\cG^{\on{restr}})$ is the full subcategory
consisting of objects whose \emph{coherent} singular support is contained in the nilpotent cone (see \cite[Sect. 21.2]{AGKRRV1});

\medskip

\item $'\!\LS_\cG^{\on{arithm}}\subset \LS_\cG^{\on{arithm}}$ is the union of \emph{some of} the connected
components of $\LS_\cG^{\on{arithm}}$;

\end{itemize}

\sssec{}

Assume now that both $X$ and $G$ are defined over $\BF_q$; for simplicity we will assume that the resulting $\BF_q$-form of $G$ is split. 
Then $\Bun_G$ also inherits this structure; in particular, it carries the geometric Frobenius endomorphism, to be denoted $\Frob_{\Bun}$. 

\medskip

In addition, the geometric Frobenius on $X$ gives rise to an automorphism, denoted $\Frob$ of $\LS_\cG^{\on{restr}}$.
The equivalence \eqref{e:GLC Intro} intertwines the actions of $(\Frob_{\Bun})_*$ on the left-hand side with the 
action of $\Frob^\IndCoh_*$ on the right-hand side.

\medskip

Taking the traces, we obtain an isomorphism of $\ol\BQ_\ell$-vector spaces:

\begin{equation} \label{e:Tr GLC Intro}
\Tr((\Frob_{\Bun})_*,\Shv_\Nilp(\Bun_G)) \simeq \Tr(\Frob^\IndCoh_*,\IndCoh_\Nilp({}'\!\LS_\cG^{\on{restr}})).
\end{equation}

\medskip

We now translate the isomorphism \eqref{e:Tr GLC Intro} into what can be called the \emph{classical} Langlands conjecture
(for function fields, unramified case).

\sssec{}

We first analyze the left-hand side in \eqref{e:Tr GLC Intro}. 

\medskip

The main theorem of \cite{AGKRRV3} says that the composition
\begin{equation} \label{e:Trace conj}
\Tr((\Frob_{\Bun})_*,\Shv_\Nilp(\Bun_G)) \to \Tr((\Frob_{\Bun})_*,\Shv(\Bun_G)) \overset{\on{LT}}\to \sFunct_c(\Bun_G(\BF_q),\ol\BQ_\ell)
\end{equation}
is an isomorphism, where:

\medskip

\begin{itemize}

\item The first arrow is induced by the embedding $\Shv_\Nilp(\Bun_G)\to \Shv(\Bun_G)$, which gives rise to a well-defined
map at the level of traces thanks to \cite[Theorem 1.1.7]{GR};

\medskip

\item $\Bun_G(\BF_q)\simeq (\Bun_G)^{\Frob_{\Bun}}$ is the groupoid of $\BF_q$-points of $\Bun_G$;

\medskip

\item $\sFunct_c(-,\ol\BQ_\ell)$ is the set of $\ol\BQ_\ell$-valued functions with compact supports on the set of
isomorphism classes of objects of a given groupoid;

\medskip

\item The second arrow is the local term map, which is a souped-up version of Grothendieck's sheaves-functions correspondence,
see \secref{sss:sheaf-functions}.

\end{itemize}

\sssec{}

We now consider the right-hand side of \eqref{e:Tr GLC Intro}.

\medskip

According to \cite[Sect. 6.4.13]{BLR}, the map
\begin{multline} \label{e:Trace IndCoh}
\Tr(\Frob^{\IndCoh}_*,\IndCoh_\Nilp(\LS_\cG^{\on{restr}}))\to
\Tr(\Frob^{\IndCoh}_*,\IndCoh(\LS_\cG^{\on{restr}})) \overset{\sim}\to  \\
\to \Gamma^{\IndCoh}(\LS_\cG^{\on{arithm}},\omega_{\LS_\cG^{\on{arithm}}})
\end{multline}
is also an isomorphism, where:

\medskip

\begin{itemize}

\item $\LS_\cG^{\on{arithm}}:=(\LS^{\on{restr}}_\cG)^{\on{Frob}}$ is the stack of $\cG$-local systems on $X$ equipped with a Weil structure; it is
a quasi-compact algebraic stack over $\ol\BQ_\ell$ (see \cite[Theorem 24.1.4]{AGKRRV1});

\medskip

\item $\omega_{(-)}$ denotes the dualizing sheaf, which is defined as an object of $\IndCoh(-)$
on any prestack locally almost of finite type; 

\medskip

\item $\Gamma^{\IndCoh}(-,-)$ is the functor of $\IndCoh$-global sections, which is defined as a functor on $\IndCoh(-)$
for quasi-compact formal (derived) algebraic stacks; 

\medskip

\item The second arrow is the isomorphism of (see \cite[Sect. 24.6]{AGKRRV1}); it is valid for any formal algebraic stack. 

\end{itemize}

An isomorphism parallel to \eqref{e:Trace IndCoh} holds for $\LS_\cG^{\on{restr}}$ replaced by $'\!\LS_\cG^{\on{restr}}$. 

\medskip

Combining \eqref{e:Tr GLC Intro}, \eqref{e:Trace conj} and \eqref{e:Trace IndCoh}, we obtain an isomorphism 
\begin{equation} \label{e:classical Intro}
\sFunct_c(\Bun_G(\BF_q),\ol\BQ_\ell) \overset{\sL_G}\simeq \Gamma^{\IndCoh}({}'\!\LS_\cG^{\on{arithm}},\omega_{'\!\LS_\cG^{\on{arithm}}}).
\end{equation}

This isomorphism is the starting point of this paper. 

\sssec{}

Now, to every closed $\on{Ad}$-invariant subset $Y$ of the nilpotent cone of $\cg$ (which is automatically conical), 
we can associate a full subcategory
$$\IndCoh_Y(\LS_\cG^{\on{restr}})\subset \IndCoh_\Nilp(\LS_\cG^{\on{restr}}),$$
stable under the action of $\Frob^\IndCoh_*$, see \cite[Sect. 21.1]{AGKRRV1}.

\medskip

The above embedding induces a map
\begin{multline} \label{e:Y Intro}
\Tr(\Frob^{\IndCoh}_*,\IndCoh_Y(\LS_\cG^{\on{restr}}))\to
\Tr(\Frob^{\IndCoh}_*,\IndCoh_\Nilp(\LS_\cG^{\on{restr}}))\simeq \\
\simeq \Gamma^{\IndCoh}(\LS_\cG^{\on{arithm}},\omega_{\LS_\cG^{\on{arithm}}}).
\end{multline}

In \secref{ss:filtr spec} we formulate a conjecture\footnote{This is our \conjref{c:gr}.}  
(which in in fact a theorem-in-progress by the authors of the present paper) that
implies that for any $Y$, the left-hand side in \eqref{e:Y Intro} is a \emph{classical} vector space (i.e., is concentrated
in cohomological degree $0$) and the map \eqref{e:Y Intro} (of classical vector spaces) is injective. 

\medskip

For the duration of this introduction we will assume the validity of this conjecture. It follows formally that a parallel assertion 
holds for $\LS_\cG^{\on{restr}}$ replaced by $'\!\LS_\cG^{\on{restr}}$. 

\sssec{}

Thus, we obtain that 
\begin{equation} \label{e:filtr Intro}
Y\rightsquigarrow \Tr(\Frob^{\IndCoh}_*,\IndCoh_Y({}'\!\LS_\cG^{\on{restr}}))
\end{equation}
defines a filtration on $\Gamma^{\IndCoh}({}'\!\LS_\cG^{\on{arithm}},\omega_{'\!\LS_\cG^{\on{arithm}}})$,
indexed by the poset of closed $\on{Ad}$-invariant subsets of the nilpotent cone of $\cg$.

\medskip

The term corresponding to $Y=\{0\}$ is the (image of)
$$\Gamma({}'\!\LS_\cG^{\on{arithm}},\CO_{{}'\!\LS_\cG^{\on{arithm}}})\simeq
\Tr(\Frob_*,\QCoh({}'\!\LS_\cG^{\on{restr}}))=\Tr(\Frob^{\IndCoh}_*,\IndCoh_{\{0\}}({}'\!\LS_\cG^{\on{restr}}))$$
under
\begin{multline} 
\Tr(\Frob^{\IndCoh}_*,\IndCoh_{\{0\}}({}'\!\LS_\cG^{\on{restr}}))\to \Tr(\Frob^{\IndCoh}_*,\IndCoh_\Nilp({}'\!\LS_\cG^{\on{restr}})\overset{\sim}\to \\
\to \Tr(\Frob^{\IndCoh}_*,\IndCoh({}'\!\LS_\cG^{\on{restr}}) \simeq \Gamma^{\IndCoh}({}'\!\LS_\cG^{\on{arithm}},\omega_{'\!\LS_\cG^{\on{arithm}}}).
\end{multline}
induced by the embedding
$$\Xi_{{}'\!\LS_\cG^{\on{restr}}}:\QCoh({}'\!\LS_\cG^{\on{restr}})\to \IndCoh({}'\!\LS_\cG^{\on{restr}}).$$

\sssec{}

Let us transfer the filtration \eqref{e:filtr Intro} to the left-hand side of the isomorphism \eqref{e:classical Intro}. We obtain a filtration
\begin{equation} \label{e:filtr autom Intro}
Y \rightsquigarrow \sFunct_c(\Bun_G(\BF_q),\ol\BQ_\ell)_Y
\end{equation}
on the space $\sFunct_c(\Bun_G(\BF_q),\ol\BQ_\ell)$ of automorphic functions.

\medskip

The goal of this paper is to initiate the study this filtration. 

\ssec{What is done in this paper?}

This paper consists of two parts that are logically almost disconnected from one another. 

\sssec{}

In the first part we study the term
of the filtration \eqref{e:filtr autom Intro} corresponding to $Y=\{0\}$. We denote it by
$$\sFunct_c(\Bun_G(\BF_q),\ol\BQ_\ell)_{\on{temp}}:=\sFunct_c(\Bun_G(\BF_q),\ol\BQ_\ell)_{\{0\}}.$$

It can be defined (without assuming \conjref{c:gr}) as the image under $\sL_G$ of the image of the map
\begin{multline} \label{e:map in tempered part}
\Gamma({}'\!\LS_\cG^{\on{arithm}},\CO_{{}'\!\LS_\cG^{\on{arithm}}})\simeq
\Tr(\Frob^{\IndCoh}_*,\IndCoh_{\{0\}}({}'\!\LS_\cG^{\on{restr}})) \to \\
\to \Tr(\Frob^{\IndCoh}_*,\IndCoh({}'\!\LS_\cG^{\on{restr}}) \simeq \Gamma^{\IndCoh}({}'\!\LS_\cG^{\on{arithm}},\omega_{'\!\LS_\cG^{\on{arithm}}}),
\end{multline}
since one knows a priori that both the left-hand side and the right-hand side are classical
(the former thanks to \cite[Corollary 3.7.3]{GaLiRe}, and the latter thanks to \eqref{e:classical Intro}).

\sssec{}

We prove (see \corref{c:temp=nondeg}) that the subspace 
$$\sFunct_c(\Bun_G(\BF_q),\ol\BQ_\ell)_{\on{temp}}\subset \sFunct_c(\Bun_G(\BF_q),\ol\BQ_\ell)$$
equals 
$$\sFunct_c(\Bun_G(\BF_q),\ol\BQ_\ell)_{\on{non-degen}}\subset \sFunct_c(\Bun_G(\BF_q),\ol\BQ_\ell),$$
where the latter is the submodule of $\sFunct_c(\Bun_G(\BF_q),\ol\BQ_\ell)$ generated by the \emph{vacuum Poincar\'e function}
$\poinc^{\on{Vac}}_!$ (see \secref{sss:vac poinc funct}) under the action of the \emph{excursion algebra} 
$$\on{Exc}(X,\cG):=\Gamma(\LS_\cG^{\on{arithm}},\CO_{\LS_\cG^{\on{arithm}}})$$
(see \secref{sss:act of exc}). 

\sssec{}

In fact, we prove that the isomorphism $\sL_G$ sends the element 
\begin{equation} \label{e:poinc funct Intro}
\poinc^{\on{Vac}}_!\in  \sFunct_c(\Bun_G(\BF_q),\ol\BQ_\ell)
\end{equation} 
to the image of
$$1\in \Gamma({}'\!\LS_\cG^{\on{arithm}},\CO_{{}'\!\LS_\cG^{\on{arithm}}})$$
under the map \eqref{e:map in tempered part}.

\medskip

The latter is the main result of the present paper, which appears as \thmref{t:main}. 

\sssec{}

Let us comment briefly on how we prove \thmref{t:main}. 

\medskip

At the first glance, this theorem should be easy to prove: we understand the equivalence \eqref{e:GLC Intro} rather
explicitly, and $\poinc^{\on{Vac}}_!$ is obtained as the \emph{class} of a Weil sheaf, namely, the object
$$\on{Poinc}^{\on{Vac}}_!\in \Shv(\Bun_G)^c,$$
which is involved in the construction of the functor $\BL_G$. 

\medskip

However, we cannot apply the functor $\BL_G$ to $\on{Poinc}^{\on{Vac}}_!$ directly because it does not lie in 
$\Shv_\Nilp(\Bun_G)$. 

\medskip

So the proof of \thmref{t:main} consists of manipulating $\on{Poinc}^{\on{Vac}}_!$ to ``make it belong" to
$\Shv_\Nilp(\Bun_G)$. We do so by applying a version of Beilinson's projector for a large enough 
algebraic substack $\CZ_a\subset \LS_\cG^{\on{restr}}$, see \thmref{t:Z a}. 

\medskip

In fact, the latter theorem answers the following general question: given an object $\CF\in \Shv(\Bun_G)^c$
equipped with a Weil structure, how to find an object $\CF'\in \Shv_\Nilp(\Bun_G)^c$ (also equipped with a Weil
structure) so that\footnote{More precisely, \thmref{t:Z a} makes the equality below hold up to a universal scalar.}
\begin{equation} \label{e:replace F}
\funct(\CF)=\funct(\CF').
\end{equation} 

\sssec{}

The second part of the paper consists of conjectures pertaining the properties of the filtration 
\eqref{e:filtr autom Intro}. There are three \emph{types} of conjectures that we propose:

\medskip

\begin{enumerate}

\item Rationality conjectures;

\medskip

\item Properties of \eqref{e:filtr autom Intro} with respect to the action of $\on{Exc}(X,\cG)$ and the Hecke algebra(s);

\medskip

\item Interaction of the two.

\end{enumerate}

\sssec{}

Note that the space $\sFunct_c(\Bun_G(\BF_q),\ol\BQ_\ell)$ comes equipped with a natural rational structure:
$$\sFunct_c(\Bun_G(\BF_q),\ol\BQ_\ell)\simeq \ol\BQ_\ell\underset{\BQ}\otimes \sFunct_c(\Bun_G(\BF_q),\BQ).$$

Our key rationality conjecture (made jointly with D.~Kazhdan), \conjref{c:filtr rat}, says that the filtration \eqref{e:filtr autom Intro} is defined
over $\BQ$.

\sssec{}

Another rationality conjecture (\conjref{c:exc rat}) says that the action of the excursion algebra $\on{Exc}(X,G)$ 
(which is equipped with a rational structure thanks to \cite{GaLiRe}) on $\sFunct_c(\Bun_G(\BF_q),\ol\BQ_\ell)$ ic
compatible with the rational structures.

\sssec{}

Note that Conjectures \conjref{c:filtr rat} and \conjref{c:exc rat} have a common consequence, namely, that the subspace
$$\sFunct_c(\Bun_G(\BF_q),\ol\BQ_\ell)_{\on{temp}}\subset \sFunct_c(\Bun_G(\BF_q),\ol\BQ_\ell)$$
is defined over $\BQ$. This is our \conjref{c:temp rat}.

\medskip

The deduction  \conjref{c:exc rat} $\Rightarrow$ \conjref{c:temp rat} uses the equality 
$$\sFunct_c(\Bun_G(\BF_q),\ol\BQ_\ell)_{\on{temp}}=\sFunct_c(\Bun_G(\BF_q),\ol\BQ_\ell)_{\on{non-degen}}$$
and the fact that the element \eqref{e:poinc funct Intro} is rational. 

\ssec{Spectral conjectures}

We propose a whole web of conjectures pertaining to the spectral properties of the action of $\on{Exc}(X,\cG)$
on $\sFunct_{\on{cusp}}(\Bun_G(\BF_q),\ol\BQ_\ell)$. This entire discussion can be viewed as 
an elaboration of \cite{Ras}. 

\sssec{}

Our key spectral conjecture is \conjref{c:Ramanujan Q ell}, which is an $\ell$-adic version of the Ramanujan-Arthur conjecture.
It says that the subspace of
$$\sFunct_{\on{cusp}}(\Bun_G(\BF_q),\ol\BQ_\ell)\subset \sFunct_c(\Bun_G(\BF_q),\ol\BQ_\ell)$$
of cuspidal autmorphic functions splits as a direct sum
$$\underset{\bO}\oplus\, \sFunct_{\on{cusp}}(\Bun_G(\BF_q),\ol\BQ_\ell)_\bO,$$
where $\bO$ runs over the set of nilpotent orbits in $\cg$. 

\medskip

The direct summand $\sFunct_{\on{cusp}}(\Bun_G(\BF_q),\ol\BQ_\ell)_\bO$ is characterized by the property that
the eigenvalues of the Hecke operators acting on it are given by Weil numbers of weight $0$, up to a correction given by $\bO$,
see \secref{sss:Weil as eigen} for the precise formulation. 

\sssec{}

We note that \conjref{c:Ramanujan Q ell} implies its rational version, \conjref{c:Ramanujan Q}, which in turn implies
a version of the Ramanujan-Arthur conjecture with $\BC$-coefficients, \conjref{c:Ram-Pet}. 

\sssec{}

In its turn, \conjref{c:Ramanujan Q ell} follows from a more general statement, given by \conjref{c:support O discr Q ell}:

\medskip 

For a given nilpotent orbit $\bO$, consider the subspace
\begin{multline*}
\sFunct_{\on{discr}}(\Bun_G(\BF_q),\ol\BQ_\ell)_{\bO}\subset 
\sFunct_c(\Bun_G(\BF_q),\ol\BQ_\ell)_{\bO}:=\\
=\sFunct_c(\Bun_G(\BF_q),\ol\BQ_\ell)_{\ol\bO}/\sFunct_c(\Bun_G(\BF_q),\ol\BQ_\ell)_{\ol\bO-\bO}
\end{multline*}
that consist of elements that are \emph{locally finite} with respect to the action of the Hecke operators.

\medskip

Our \conjref{c:support O discr Q ell} says that the
eigenvalues of the Hecke operators acting on the space $\sFunct_{\on{discr}}(\Bun_G(\BF_q),\ol\BQ_\ell)_{\bO}$ 
are given by \emph{mock} Weil numbers\footnote{See \cite[Sect. 2.1.2]{GaLiRe} for what this means. In fact, according to \conjref{c:mock Weil}, these
are actual Weil numbers.} of weight $0$,
up to a correction given by $\bO$.

\medskip

In fact, we propose an even more precise conjecture, \conjref{c:locally finite O}, 
which says that the support of $\sFunct_{\on{discr}}(\Bun_G(\BF_q),\ol\BQ_\ell)_{\bO}$
in
$$\LS^{\on{arithm,coarse}}_\cG:=\Spec(\on{Exc}(X,\cG))$$
consists of \emph{elliptic Arthur parameters}, whose $SL_2$ component corresponds to $\bO$.

\medskip

It appears, however, that \conjref{c:locally finite O} is within reach. It is the subject of the work-in-progress \cite{GaLaRa}. As was metioned
above, once this conjecture becomes as theorem, so will the other conjectures mentioned previously in the current subsection. 

\sssec{}

Let us now assume the rationality conjecture, i.e., \conjref{c:filtr rat}. This will allow us to connect the filtration \eqref{e:filtr autom Intro}
to a decomposition of $L^2(\Bun_G(\BF_q))$, conjectured by J.~Arthur. 

\medskip

On the one hand, by \conjref{c:filtr rat}, the filtration \eqref{e:filtr autom Intro} induces a filtration 
$$Y\rightsquigarrow \sFunct_c(\Bun_G(\BF_q),\BQ)_Y$$
on $\sFunct_c(\Bun_G(\BF_q),\BQ)$,
which in turn induces a filtration 
$$Y\rightsquigarrow \sFunct_c(\Bun_G(\BF_q),\BC)_Y$$
on $\sFunct_c(\Bun_G(\BF_q),\BC)$.

\medskip

On the other hand, Arthur's conjecture predicts a decomposition of $L^2(\Bun_G(\BF_q))$ as
\begin{equation} \label{e:Arth Intro}
L^2(\Bun_G(\BF_q))\simeq \underset{\bO}\oplus\, L^2(\Bun_G(\BF_q))_\bO,
\end{equation}
where each direct summand is characterized by its spectral support with respect to the Hecke algebra,
see \conjref{c:Arthur} for the precise statement. (Note \eqref{e:Arth Intro} also implies the Ramanujan-Arthur conjecture,
\conjref{c:Ram-Pet}.) 

\medskip

Jointly with D.~Kazhdan, we propose \conjref{c:Arth alg}, which says that for a given closed Ad-invariant subset $Y$ of the nilpotent 
cone of $\cG$, the closure of $\sFunct_c(\Bun_G(\BF_q),\BC)_Y$ in $L^2(\Bun_G(\BF_q))$ equals 
$$\underset{\bO\,|\, \bO\subset Y}\oplus\, L^2(\Bun_G(\BF_q))_\bO.$$

\medskip

We note that \conjref{c:Arth alg} implies \conjref{c:support O discr Q ell} mentioned above. 

\sssec{}

A part of our system of conjectures can be depicted by the following diagram:

$$
\xy
(-50,0)*+{\underset{\text{(supp.of alg.discr.subqnt is ellptc)}}{\text{Conj. \ref{c:locally finite O}}}}="A";
(0,0)*+{\underset{\text{(supp.of alg.discr.subqnt}/\ol\BQ_\ell \text{ is mock-Weil)}}{\text{Conj. \ref{c:support O discr Q ell}}}}="B";
(50,0)*+{\underset{\text{Ramanujan-Arthur over }\ol\BQ_\ell}{\text{Conj. \ref{c:Ramanujan Q ell}}},}="C";
(0,20)*+{\underset{\text{(supp.of alg.discr.subqnt}/\BQ \text{ is mock-Weil)}}{\text{Conj. \ref{c:support O discr Q}/(Conj. \ref{c:filtr rat})}}}="BQ";
(50,20)*+{\underset{\text{Ramanujan-Arthur over }\BQ}{\text{Conj. \ref{c:Ram-Pet}}}}="CQ";
(0,40)*+{\underset{\text{(supp.of alg.discr.subqnt}/\BC \text{ is mock-Weil)}}{\text{Conj. \ref{c:support O discr C}/(Conj. \ref{c:filtr rat})}}}="BC";
(50,40)*+{\underset{\text{Ramanujan-Arthur over }\BC}{\text{Conj. \ref{c:Ram-Pet}}}}="CC";
(50,60)*+{\underset{\text{Arthur conjecture}}{\text{Conj. \ref{c:Arthur}}}}="D";
{\ar@{=>} "A";"B"};
{\ar@{=>} "B";"C"};
{\ar@{=>} "BQ";"CQ"};
{\ar@{=>} "BC";"CC"};
{\ar@{<=>} "B";"BQ"};
{\ar@{<=>} "BQ";"BC"};
{\ar@{<=>} "C";"CQ"};
{\ar@{<=>} "CQ";"CC"};
{\ar@{=>} "D";"CC"};
{\ar@{=>}_{\text{Conj. \ref{c:Arth alg}}} "D";"BC"};
\endxy
$$
where:

\begin{itemize}

\item Conj. X/(Conj. Y) means that Conjecture X makes sense once we accept Conjecture Y;

\medskip

\item $\overset{\text{Conj. Z}}\Leftarrow$ means that the implication uses Conjecture Z. 

\end{itemize} 

\ssec{Plan of the paper}

\sssec{}

In \secref{s:poinc} we state and prove \thmref{t:main}, mentioned above, which describes the image of the function
$\poinc^{\on{Vac}}_!$ under the isomorphism $\sL_G$.

\medskip

The proof uses a certain technical assertion, \thmref{t:present}, which we deal with in \secref{s:proof of present}.  

\sssec{}

In \secref{s:filtr} we introduce the filtration by nilpotent orbits, first on the spectral side, and then on the automorphic
side of the isomorphism \eqref{e:classical Intro}. 

\medskip

The existence of this filtration relies on \conjref{c:gr}, which is work-in-progress by the authors of the present paper.
We assume this conjecture throughout the paper. 

\medskip

Using \thmref{t:main}, we prove that the term of this filtration corresponding to the $0$-orbit equals the subspace of
non-degenerate functions, i.e., the subspace of functions generated by $\poinc^{\on{Vac}}_!$ under the action of the
excursion algebra. 

\medskip

Finally, we formulate our rationality conjectures, \ref{c:filtr rat} and \ref{c:exc rat}. 

\sssec{}

In \secref{s:Arth} we present our maze of conjectures about the spectral properties 
of the action of $\on{Exc}(X,\cG)$ on $\sFunct_{\on{cusp}}(\Bun_G(\BF_q),\ol\BQ_\ell)$,
and the corresponding system of logical implications. 

\ssec{Conventions and notation}

The conventions and notations in this paper follow those adopted in \cite{GR}. 

\sssec{} We fix $X$ a smooth and complete curve and $G$ a reductive group over $\ol\BF_q$, but 
both assumed to be defined over $\BF_q$, so that they carry an action of the geometric Frobenius.
For simplicity, we will assume that the $\BF_q$-form of $G$ is split.

\medskip

We let $\cG$ be the Langlands dual of $G$, which we regard as a reductive group over the field
of coefficients $\ol\BQ_\ell$. 

\sssec{} 

We will deal with two kinds of algebraic geometry. One is taking place over the \emph{ground field}, which
is $\ol\BF_q$, and we will be interested in the category of constructible sheaves on schemes or algebraic stacks over 
this $\ol\BQ_\ell$. As this category is not sensitive to infinitesimals, we can stay within the realm of \emph{classical}
(as opposed to derived) algebraic geometry. 

\medskip

The second kind of algebraic geometry is taking place over the \emph{field of coefficients}, which is $\ol\BQ_\ell$. 
We will be dealing with coherent (or ind-coherent) sheaves on prestacks over $\ol\BQ_\ell$. As these categories
are sensitive to the derived structure, we have to place ourselves in the context of \emph{derived} algebraic geometry
(but in the simplified context\footnote{Foundations of this theory can be found, e.g., in \cite[Chapter 2]{GaRo1}.},
since $\ol\BQ_\ell$ has characteristic $0$). 

\sssec{}

We will be dealing with \emph{higher algebra} over $\ol\BQ_\ell$, in which the basic object is the $\infty$-category
of $\ol\BQ_\ell$-linear DG-categories, denoted $\DGCat_{\ol\BQ_\ell}$ (see, e.g.,  \cite[Chapter 1, Sect. 10]{GaRo1}). 
This category carries a symmetric monoidal structure, given by the Lurie tensor product, for which the unit is the
category $\Vect_{\ol\BQ_\ell}$ of chain complexes of $\ol\BQ_\ell$-vector spaces. 

\medskip

In particular, for a \emph{dualizable} object $\bC\in \DGCat_{\ol\BQ_\ell}$ equipped with an endofunctor $F$, we have a
well-defined object
$$\Tr(F,\bC)\in \Vect_{\ol\BQ_\ell}.$$

\sssec{}

As was mentioned above, the main source of objects in $\DGCat_{\ol\BQ_\ell}$ in this paper is provided by the construction
$$\CY \rightsquigarrow \Shv(\CY),$$
where $\CY$ is a scheme (or, more generally, a prestack locally of finite type) over $\ol\BF_q$; see, e.g., 
\cite[Sects 1.1 and F.1]{AGKRRV1}

\ssec{Acknowledgements}

Many of the conjectures in Sects. \ref{s:filtr} and \ref{s:Arth} were proposed in the course of discussions with D.~Kazhdan. 
We are grateful to him for sharing his ideas and for allowing us to make them public. 

\medskip

We wish to thank A.~Eteve, M.~Harris, K.~Lin, A.~Okounkov, W.~Reeves and P.~Scholze for some very valuable discussions. 

\section{The image of the Poincar\'e function} \label{s:poinc}

In this section we formulate and prove the main result of this paper, \thmref{t:main}, that
describes the image of the vacuum Poincar\'e (a.k.a., basic Whittaker) function under the
\emph{classical} Langlands isomorphism. 

\ssec{Sheaves-functions dictionary} \label{ss:sheaf-functions}

\sssec{} 

Let $\CY$ be a quasi-compact algebraic stack over $\ol\BF_q$, but defined over $\BF_q$, so that it carries the geometric 
Frobenius, denoted $\Frob_\CY$. 

\medskip

Note that the functor $(\Frob_\CY)_*$ is continuous 
(see, e.g., \cite[Lemma 5.1.3]{GRV}\footnote{The potential issue could have been that $\CY$ is a stack, the morphism
$\Frob_\CY$ is not schematic.}).

\sssec{}

By a \emph{weak Weil sheaf} on $\CY$ we will mean an object
$\CF\in \Shv(\CY)$, equipped with a map
$$\alpha:\CF\to (\Frob_\CY)_*(\CY).$$

Note that a usual Weil sheaf is a similar data, where the map $\alpha$ is required to be an isomorphism.

\sssec{}

Assume that $\CF$ belongs to $\Shv(\CY)^c$. Then, following Grothendieck
 to the data of $(\CF,\alpha)$ one associates an element
$$\funct(\CF,\alpha)\in \sFunct(\CF(\BF_q),\ol\BQ_\ell).$$

When no ambiguity is likely to occur, we will simply write $\funct(\CF)$. 

\medskip

There are (at least) three ways to think about this construction.

\sssec{}

For $y\in \CF(\BF_q)$, let $i_y$ denote the corresponding morphism
$$\Spec(\BF_q) \to \CY.$$

By adjunction, $\alpha$ gives rise to a map
$$\alpha':\Frob_\CY^*(\CF)\to \CF.$$

Pulling back by means of $i_y$, and using the fact that $\Frob_\CY\circ i_y\simeq i_y$ from $\alpha'$ we obtain a map, to be denoted $\alpha'_y$
$$\CF_y:=i_y^*(\CF)\simeq i_y^*\circ (\Frob_\CY)^* \overset{\alpha'}\to i_y^*(\CF)=\CF_y.$$

The value of $\funct(\CF)$ at $y$ is by definition
$$\Tr(\alpha'_y,\CF_y)\in \ol\BQ_\ell.$$

This is Grothendieck's original definition.

\sssec{}

The above definition can be rewritten (almost tautologically as follows):

\medskip

The functor 
$$i_y^*:\Shv(\CY)\to \Vect_{\ol\BQ_\ell}$$ satisfies
$$i_y^* \simeq i_y^*\circ (\Frob_\CY)_*,$$
and admits a colimit-preserving right adjoint. 

\medskip

Hence, by \cite[Sect. 3.4.1]{GKRV}, $i_y^*$ gives rise to a map
$$\Tr((\Frob_\CY)_*,\Shv(\CY))\to \Tr(\on{Id},\Vect_{\ol\BQ_\ell})\simeq \ol\BQ_\ell,$$
to be denoted $\on{LT}^{\on{naive}}_y$.

\medskip

The maps $\on{LT}^{\on{naive}}_y$ combine to a map
$$\on{LT}_\CY^{\on{naive}}:\Tr((\Frob_\CY)_*,\Shv(\CY))\to \sFunct(\CF(\BF_q),\ol\BQ_\ell).$$

\medskip

Now, to $(\CF,\alpha)$ as above with $\CF\in \Shv(\CY)^c$ one attaches an element
$$\on{cl}(\CF,\alpha)\in \Tr((\Frob_\CY)_*,\Shv(\CY)),$$
see \cite[Sect. 3.4.3]{GKRV}.

\medskip

Unwinding, we obtain:
$$\funct(\CF)=\on{LT}_\CY^{\on{naive}}(\CF,\alpha).$$

\sssec{}

Finally, following \cite[Sect. 22.2]{AGKRRV1}, for any endomorphism $\phi$ on $\CY$, there is a canonically defined map
$$\Tr(\phi_*,\Shv(\CY))\overset{\on{LT}_\phi}\to \on{C}^\cdot(\CY^\phi,\omega_{\CY^\phi}).$$

Take $\phi=\Frob_\CY$ and let us notice that 
$$\CY^{\Frob_\CY}\simeq \underset{y\in \CY(\BF_q)}\sqcup\, \on{pt}/\on{Aut}_y(\BF_q).$$

Hence, 
$$\on{C}^\cdot(\CY^{\Frob_\CY},\omega_{\CY^{\Frob_\CY}})\simeq \sFunct(\CF(\BF_q),\ol\BQ_\ell).$$

Denote the resulting map $\on{LT}_{\Frob_\CY}$ 
$$\Tr((\Frob_\CY)_*,\Shv(\CY))\to \sFunct(\CF(\BF_q),\ol\BQ_\ell)$$
by $\on{LT}_\CY^{\on{true}}$.

\sssec{} \label{sss:sheaf-functions} 

The main result of the paper \cite{GV} says that we have
$$\on{LT}_\CY^{\on{naive}}=\on{LT}_\CY^{\on{true}}.$$

Hence, we can talk about a map
$$\on{LT}_\CY:\Tr((\Frob_\CY)_*,\Shv(\CY))\to \sFunct(\CY(\BF_q),\ol\BQ_\ell),$$
omitting the superscript.

\sssec{}

The above discussion generalizes to algebraic stacks that are not necessarily quasi-compact,
see \cite[Sect. 22.2.11]{AGKRRV1}, with the difference that now the target is the space
$$\sFunct_c(\CY(\BF_q),\ol\BQ_\ell)$$
of functions with finite support. 

\sssec{}  \label{sss:sheaf-functions funct}

The following two features of the map $\on{LT}_\CY$ are obvious from its definition as $\on{LT}_\CY^{\on{naive}}$:

\begin{itemize}

\item For a map $f:\CY_1\to \CY_2$ and a weak Weil sheaf $(\CF,\alpha)$ on $\CY_2$ with $\CF$ compact, 
we have
$$\funct(f^*(\CF))=f^*(\funct(\CF)),$$
where:

\begin{itemize}

\item In the right-hand side, $f^*$ denotes the pullback map
$$\sFunct(\CY_2(\BF_q),\ol\BQ_\ell)\to \sFunct(\CY_1(\BF_q),\ol\BQ_\ell);$$

\item $f^*(\CF)$ is equipped with a weak Weil structure induced by that on $\CF$;

\end{itemize}

\medskip

\item For $(\CF_1,\alpha_1)$ and $(\CF_2,\alpha_2)$ two Weil sheaves on $\CY$, 
$$\funct(\CF_1\otimes \CF_2)=\funct(\CF_1)\cdot \funct(\CF_2),$$
where $\CF_1\otimes \CF_2$ is equipped with a weak Weil structure, induced by $\alpha_1$ 
and $\alpha_2$.

\end{itemize} 

\medskip

The next property (see \cite{GV}) is the Grothendieck-Lefschetz formula:

\medskip

\begin{itemize}

\item For a map $f:\CY_1\to \CY_2$ and a weak Weil sheaf $(\CF,\alpha)$ on $\CY_1$ with $\CF$ compact, 
we have
$$\funct(f_!(\CF))=f_!(\funct*\CY),$$
where:

\begin{itemize}

\item In the right-hand side, 
$$f:\sFunct(\CY_1(\BF_q),\ol\BQ_\ell)\to \sFunct(\CY_2(\BF_q),\ol\BQ_\ell)$$
denotes the operation of summation along the fibers of the map of groupoids
$$(\CY_1)^{\Frob_{\CY_1}}\to (\CY_2)^{\Frob_{\CY_2}};$$

\item $f_!(\CF)$ is equipped with a weak Weil structure induced by that on $\CF$. 

\end{itemize}

\end{itemize} 

\ssec{The vacuum Poincar\'e function}

\sssec{} \label{sss:vac poinc sheaf}

Consider the object 
$$\on{Poinc}^{\on{Vac}}_!\in \Shv(\Bun_G)^c,$$
see \cite[Sect. 1.1.1]{GR}; the definition is recalled below. 

\medskip

It is equipped with a naturally defined Weil structure. Set  
$$\poinc^{\on{Vac}}_!:=\funct(\on{Poinc}^{\on{Vac}}_!)\in \sFunct_c(\Bun_G(\BF_q),\ol\BQ_\ell).$$

\sssec{} \label{sss:vac poinc funct}

Explicitly, the function $\poinc^{\on{Vac}}_!$ can be described as follows. Consider the stack
$$\Bun_{N,\rho(\omega_X)}:=\Bun_B\underset{\Bun_T}\times \{\rho(\omega_X)\}.$$

It is equipped with a map 
$$\chi:\Bun_{N,\rho(\omega_X)}\to \BG_a,$$
see \cite[Sect. 1.3.6]{GLC1}. 

\medskip

Let $\fp$ denote the natural projection
$$\Bun_{N,\rho(\omega_X)}\to \Bun_G.$$

Then
$$\on{Poinc}^{\on{Vac}}_!:=\fp_!\circ \chi^*(\on{AS}),$$
where $\on{AS}\in \Shv(\BG_a)$ is the Artin-Schreier sheaf corresponding to a 
(chosen once and for all) character $\mathsf{exp}:\BF_q\to \ol\BQ_\ell^\times$, so that
$$\funct(\on{AS})=\mathsf{exp}.$$

\medskip

In its turn
$$\poinc^{\on{Vac}}_!=\fp_!\circ \chi^*(\mathsf{exp}),$$
see \secref{sss:sheaf-functions funct} for the symbols $(-)_!$ and $(-)^*$, respectively.

\sssec{}

Equivalently, the function $\poinc^{\on{Vac}}_!$ ``represents" the function of 
\emph{vacuum Whittaker coefficient}:

\medskip

For $\sff\in \sFunct_c(\Bun_G(\BF_q),\ol\BQ_\ell)$,
$$\langle \poinc^{\on{Vac}}_!,\sff \rangle_{\Bun_G(\BF_q)}=\underset{\Bun_{N,\rho(\omega_X)}(\BF_q)}\int\, \fp^*(\sff)\cdot \chi^*(\mathsf{exp}),$$
where for a quasi-compact algebraic stack $\CY$

\medskip 

\begin{itemize}

\item For $\sg\in \sFunct_c(\CY(\BF_q),\ol\BQ_\ell)$, 
$$\underset{\CY(\BF_q)}\int\, \sg:=\underset{y\in \CY(\BF_q)/\sim}\Sigma\, \sg(y)\cdot \frac{1}{|\on{Aut}(y)|};$$

\item For $\sg_1,\sg_2\in \sFunct_c(\CY(\BF_q),\ol\BQ_\ell)$,
$$\langle \sg_1,\sg_2\rangle_{\CY(\BF_q)}:=\underset{\CY(\BF_q)}\int\, \sg_1\cdot \sg_2.$$

\end{itemize} 

\ssec{The statement}

\sssec{}

Recall that according to \cite[Theorem 1.3.9(a)]{GR}, there is a canonically defined equivalence
\begin{equation} \label{e:GLC}
\Shv_\Nilp(\Bun_G) \overset{\BL_G^{\on{restr}}}\simeq \IndCoh_\Nilp({}'\!\LS_\cG^{\on{restr}}),
\end{equation} 
where $'\!\LS_\cG^{\on{restr}}$ is the union of \emph{some of} the connected components of $\LS_\cG^{\on{restr}}$.

\begin{rem}

Recall also that according to Theorem 1.3.9(c), the inclusion 
$$'\!\LS_\cG^{\on{restr}}\subseteq \LS_\cG^{\on{restr}}$$
is known to be an equality if $G=GL_n$, and is conjectured to be such for any $G$.

\end{rem}

\sssec{} \label{sss:char L}

A defining feature of the equivalence $\BL_G^{\on{restr}}$ is that the functor
$$\CHom_{\Shv(\Bun_G)}(\on{Poinc}^{\on{Vac}}_!,-):\Shv_\Nilp(\Bun_G)\to \Vect_{\ol\BQ_\ell}$$
corresponds to the functor
$$\Gamma^{\IndCoh}({}'\!\LS^{\on{restr}}_\cG,-):\IndCoh_\Nilp({}'\!\LS_\cG^{\on{restr}}) \to \Vect_{\ol\BQ_\ell}.$$

Note, however, that it \emph{does not} make sense to ask about the image of $\on{Poinc}^{\on{Vac}}_!$ under
$\BL_G^{\on{restr}}$, because $\on{Poinc}^{\on{Vac}}_!$ \emph{does not} belong to $\Shv_\Nilp(\Bun_G)$.

\sssec{}

The equivalence $\BL_G^{\on{restr}}$ is compatible with the Frobenius endofunctor on both sides, where:

\begin{itemize}

\item On the geometric side, this is the *-direct image with respect to the geometric Frobenius of $\Bun_G$;

\medskip

\item On the spectral side, this is the !-pullback with respect to the automorphism of $\LS^{\on{restr}}_\cG$,
given by pullback along the geometric Frobenius on $X$.

\end{itemize}

\sssec{}

Denote
$$\LS_\cG^{\on{arithm}}:=(\LS_\cG^{\on{restr}})^{\Frob}.$$
According to \cite[Theorem 24.1.4]{AGKRRV1}, $\LS_\cG^{\on{arithm}}$ is a quasi-compact algebraic stack locally 
almost of finite type. Similarly, denote
$${}'\!\LS_\cG^{\on{arithm}}:=({}'\!\LS_\cG^{\on{restr}})^{\Frob}.$$

\medskip

As is explained in \cite[Sect. 1.5]{GR}, by taking the categorical trace of Frobenius on either side of $\BL_G^{\on{restr}}$, 
the equivalence \eqref{e:GLC} induces an isomorphism of objects of $\Vect_{\ol\BQ_\ell}$:

\begin{equation} \label{e:classical L}
\sFunct_c(\Bun_G(\BF_q),\ol\BQ_\ell) \overset{\sL_G}\simeq \Gamma^{\IndCoh}({}'\!\LS_\cG^{\on{arithm}},\omega_{'\!\LS_\cG^{\on{arithm}}}).
\end{equation}

\sssec{}

We claim now that there is a canonically defined map
\begin{equation} \label{e:O to omega}
\Gamma(\LS_\cG^{\on{arithm}},\CO_{\LS_\cG^{\on{arithm}}})\to
\Gamma^{\IndCoh}(\LS_\cG^{\on{arithm}},\omega_{\LS_\cG^{\on{arithm}}})
\end{equation} 
and similarly for $\LS_\cG^{\on{arithm}}$ replaced by $'\!\LS_\cG^{\on{arithm}}$. 

\medskip

In fact, such a map is defined for an algebraic stack of the form $\CY^\phi$, where $\CY$ is a quasi-smooth
formal algebraic stack, and $\phi$ is its endomorphism. 

\medskip

Consider the functor
$$\Xi_\CY: \QCoh(\CY)\to \IndCoh(\CY)$$
equal to $(\CL_\CY\otimes -)\circ \Upsilon_\CY$, where:

\begin{itemize}

\item $\Upsilon_\CY$ is the canonical natural transformation $\QCoh(-)\to \IndCoh(\CY)$, given by tensoring with the dualizing sheaf;

\medskip

\item $\CL_\CY$ is the inverse of the determinant line bundle on $\CY$, i.e., $\det(T^*(\CY))^{\otimes -1}[-\dim(\CY)]$. 

\end{itemize}

Since $\CY$ is quasi-smooth, this functor preserves compactness. Note that we have a diagram 
\begin{equation} \label{e:Xi diag}
\CD
\QCoh(\CY) @>{\Xi_\CY}>> \IndCoh(\CY) \\
@A{\phi^*}AA @AA{\phi^!}A \\
\QCoh(\CY) @>>{\Xi_\CY}> \IndCoh(\CY)
\endCD
\end{equation} 
that commutes up to a natural transformation, given by tensoring with $\CL_Y\otimes \phi^*(\CL^{\otimes -1}_Y)$. 

\medskip

Hence (see \cite[Sect. 3.4]{GKRV}), the diagram \eqref{e:Xi diag} induces a map
\begin{equation} \label{e:Xi Tr}
\Tr(\phi^*,\QCoh(\CY))\to \Tr(\phi^!,\IndCoh(\CY)).
\end{equation} 

We have:
$$\Tr(\phi^*,\QCoh(\CY))\simeq \Gamma(\CY^\phi,\CO_{\CY^\phi})$$
and
$$\Tr(\phi^!,\IndCoh(\CY))\simeq \Gamma^{\IndCoh}(\CY^\phi,\omega_{\CY^\phi}).$$

Hence, \eqref{e:Xi Tr} gives the sought-for map
\begin{equation} \label{e:Xi Tr bis}
\Gamma(\CY^\phi,\CO_{\CY^\phi}) \to \Gamma^{\IndCoh}(\CY^\phi,\omega_{\CY^\phi}).
\end{equation}

\begin{rem}
In fact the map \eqref{e:Xi Tr bis} can be obtained by applying $\Gamma(\CY^\phi,-)$ to a map 
\begin{equation} \label{e:Xi Tr bis refined}
\CO_{\CY^\phi} \to \Psi(\omega_{\CY^\phi}),
\end{equation}
where $\Psi_{(-)}$ is the natural transformation $\IndCoh(-)\to \QCoh(-)$ (defined for algebraic stacks or formal algebraic 
stacks).

\medskip

Indeed, the map \eqref{e:Xi Tr bis refined} is obtained from \eqref{e:Xi diag} by considering the enhanced trace 
$\Tr^{\on{enh}}_{\QCoh(\CY)}(-,-)$ relative to the $\QCoh(\CY)$-action on both sides (see \cite[Sect. 3.8.2]{GKRV}),
instead of the absolute categorical trace.

\end{rem}

\begin{rem}

The map \eqref{e:Xi Tr bis refined} can also be interpreted as follows. Note that $\CY^\phi$ is
\emph{quasi-quasi-smooth}. 

\medskip

We expect that for any quasi-quasi-smooth $\CZ$, there is a canonical map
\begin{equation} \label{e:Xi Tr bis refined qq}
\det(T^*(\CZ))[-\dim(\CZ)]\to \Psi_\CZ(\omega_\CZ).
\end{equation}
(It is easy to construct this map for $\CZ$ exhibited as a fiber product $\CY_1\underset{\CY_0}\times \CY_2$,
where $\CY_i$ are quasi-smooth; the problem of constructing \eqref{e:Xi Tr bis refined qq} consists of gluing.)

\medskip

Now, for $\CZ$ of the form $\CY^\phi$, we have $\dim(\CZ)=0$ and $\det(T^*(\CZ))\simeq \CO_\CZ$. 
Hence, in this case, \eqref{e:Xi Tr bis refined qq} is a map
\begin{equation} \label{e:Xi Tr bis refined another}
\CO_{\CY^\phi} \to \Psi(\omega_{\CY^\phi}).
\end{equation}

We expect that the map \eqref{e:Xi Tr bis refined another} equals the map \eqref{e:Xi Tr bis refined}.

\end{rem} 

\sssec{}

We are now ready to state the main result of this section (which is also the main technical result
of the paper):

\begin{thm} \label{t:main}
Under the isomorphism \eqref{e:classical L}, the element
$$\poinc^{\on{Vac}}_!\in \sFunct_c(\Bun_G(\BF_q),\ol\BQ_\ell)$$
corresponds to the image of
$$1\in \Gamma({}'\!\LS_\cG^{\on{arithm}},\CO_{'\!\LS_\cG^{\on{arithm}}})$$
under the map \eqref{e:O to omega}.
\end{thm}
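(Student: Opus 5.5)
We follow the strategy indicated in the introduction: replace $\on{Poinc}^{\on{Vac}}_!$ by an object of $\Shv_\Nilp(\Bun_G)^c$ carrying a Weil structure and having the same function, then compute its class via the spectral side. The point is that $\poinc^{\on{Vac}}_!=\on{LT}(\on{cl}(\on{Poinc}^{\on{Vac}}_!))$, but $\on{cl}(\on{Poinc}^{\on{Vac}}_!)$ lives in $\Tr((\Frob_{\Bun})_*,\Shv(\Bun_G))$, not in the trace of the nilpotent subcategory.

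\medskip

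\emph{Step 1 (Beilinson projector).} Choose a quasi-compact algebraic substack $\CZ_a\subset {}'\!\LS_\cG^{\on{restr}}$, stable under $\Frob$ and containing the image of $\LS_\cG^{\on{arithm}}$. Using the action of $\QCoh(\LS_\cG^{\on{restr}})$ on $\Shv_\Nilp(\Bun_G)$ and the right adjoint of the inclusion $\Shv_\Nilp(\Bun_G)\hookrightarrow\Shv(\Bun_G)$, form
$$\CF':=\CO_{\CZ_a}\star \on{Poinc}^{\on{Vac}}_{!,\Nilp},$$
where $\on{Poinc}^{\on{Vac}}_{!,\Nilp}$ is the projection of $\on{Poinc}^{\on{Vac}}_!$ to $\Shv_\Nilp(\Bun_G)$, i.e. the object co-representing $\CHom(\on{Poinc}^{\on{Vac}}_!,-)$ on $\Shv_\Nilp$. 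The claim to prove, which is \thmref{t:Z a}, is that $\CF'$ is compact, inherits a weak Weil structure, and satisfies $\funct(\CF')=c\cdot\funct(\on{Poinc}^{\on{Vac}}_!)$ for a universal scalar $c$ (presumably $1$ after normalization). The argument would use the trace-compatibility of \cite[Theorem 1.1.7]{GR} (the map $\Tr(\Shv_\Nilp)\to\Tr(\Shv)$ together with the projector), plus the fact that after taking Frobenius fixed points only the locus $\LS_\cG^{\on{arithm}}\subset\CZ_a$ contributes. I expect this step to be the main obstacle: compactness of $\CF'$ and control of the trace require a presentation of $\CO_{\CZ_a}$ by compact objects, which is presumably what \thmref{t:present} supplies.

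\medskip

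\emph{Step 2 (spectral image).} By \secref{sss:char L}, $\BL_G^{\on{restr}}$ sends $\on{Poinc}^{\on{Vac}}_{!,\Nilp}$ to the object co-representing $\Gamma^{\IndCoh}$ on $\IndCoh_\Nilp$. After projecting with $\CO_{\CZ_a}$, this becomes $\Xi(\CO_{\CZ_a})$, up to the twist $\CL$, viewed in $\IndCoh_{\{0\}}$. Its Frobenius-Weil structure corresponds to the natural one, coming from the diagram \eqref{e:Xi diag}.

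\medskip

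\emph{Step 3 (compute the class).} By functoriality of classes under $\Xi$, $\on{cl}(\Xi(\CO_{\CZ_a}))$ is the image under \eqref{e:Xi Tr} of $\on{cl}(\CO_{\CZ_a})\in\Tr(\Frob^*,\QCoh)\simeq\Gamma(\LS^{\on{arithm}}_\cG,\CO)$. That class is the restriction of the unit, namely $1$, since the fixed points of $\Frob$ on $\CZ_a$ equal ${}'\!\LS_\cG^{\on{arithm}}$. Combining this with $\funct=\on{LT}\circ\on{cl}$, and with the compatibility of \eqref{e:classical L} with classes of compact Weil objects, yields the theorem, once we check that the scalar $c$ and the line-bundle twist $\CL_\CY\otimes\phi^*\CL_\CY^{-1}$ cancel.
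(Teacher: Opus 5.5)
Your architecture is the paper's: project $\on{Poinc}^{\on{Vac}}_!$ into $\Shv_\Nilp(\Bun_G)$ via a Frobenius-stable regular substack $\CZ_a$, identify the spectral image with $(i_a)_*(\CO_{\CZ_a})\otimes\Xi(\CO)$, and compare classes. However, the decisive computation in Step 3 is wrong, and Step 1 is misstated in a way that hides the actual mechanism.

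The object acting on both sides is $(i_a)_*(\CO_{\CZ_a})\in\QCoh(\LS^{\on{restr}}_\cG)$, not $\CO_{\CZ_a}\in\QCoh(\CZ_a)$; the latter's class in $\Tr(\Frob,\QCoh(\CZ_a))$ is indeed $1$. To compute the class of $(i_a)_*(\CO_{\CZ_a})$ in $\Tr(\Frob,\QCoh(\LS^{\on{restr}}_\cG))\simeq\Gamma(\LS^{\on{arithm}}_\cG,\CO)$, one applies $(i_a)^*$. This induces an isomorphism on traces precisely because $(\CZ_a)^{\Frob}\simeq\LS^{\on{arithm}}_\cG$. So the class is that of $\CE_a=(i_a)^*(i_a)_*(\CO_{\CZ_a})$. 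Since $\on{gr}(\CE_a)\simeq\Sym(N^*_a[1])$, this is the function $\sfe_a$ with $\sfe_a(\sigma)=\det(\on{id}-\Frob,(N^*_a)_\sigma)$.

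This normal-direction contribution is exactly what your heuristic ``only the fixed locus contributes'' drops. The function $\sfe_a$ depends on $a$, in general varies with $\sigma$, and cannot be normalized to $1$. For the same reason, the correct form of Step 1 (\thmref{t:Z a}) is $\funct(\iota_a\circ\sP_a(\CF))=\sfe_a\cdot\funct(\CF)$, with $\sfe_a$ acting through $\on{Exc}(X,\cG)$ rather than as a universal scalar. Indeed, the trace of $(i_a)_*(\CO_{\CZ_a})\otimes(-)$ on $\Tr(\Frob,\Shv_\Nilp(\Bun_G))$ is multiplication by that same class. As written, your Step 1 (with $c=1$) and your Step 3 each assert a false identity, each off by the factor $\sfe_a$.

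The missing idea that closes the argument has three parts. First, the same element $\sfe_a$ appears on both sides. Second, $\sL_G$ and the map \eqref{e:O to omega} are $\on{Exc}(X,\cG)$-linear. Third, $\sfe_a$ is invertible (\lemref{l:invertible}), because $(\CZ_a)^{\Frob}\to\LS^{\on{arithm}}_\cG$ being an isomorphism forces $((N_a)_\sigma)^{\Frob}=0$. Together these let you cancel $\sfe_a$.

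This is the real role of \thmref{t:present}(b) and of the regularity of $i_a$; it is not about presenting $\CO_{\CZ_a}$ by compact objects. Compactness of $\iota_a\circ\sP_a(\CF)$ comes from \propref{p:iota comp}, i.e., continuity of $(i_a)^!$ together with the fact that $\iota$ preserves compactness. The line-bundle twist $\CL_\CY\otimes\phi^*(\CL^{\otimes -1}_\CY)$ is part of the datum defining \eqref{e:O to omega}, so it has nothing to cancel against.

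Finally, your sketch of Step 1 omits the two ingredients the paper actually uses. One is the identity of \thmref{t:Frob}: the trace of $\Phi_\CF$ is the pairing with $\funct(\CF)$. The other is the isomorphism $\Phi_{\iota_a\circ\sP_a(\CF)}\simeq\Phi_\CF\circ\iota_a\circ\sP_a$, which requires $\CZ_a$ to be Chevalley-invariant (\thmref{t:present}(a)).
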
 

\sssec{} \label{sss:describe image}

The rest of the section will be devoted to the proof of this theorem. In the course of the proof, we will answer the following general question:

\medskip

Given $\CF\in \Shv(\Bun_G)^c$, equipped with a weak Weil structure, how to describe the element 
$\Gamma^{\IndCoh}({}'\!\LS_\cG^{\on{arithm}},\omega_{'\!\LS_\cG^{\on{arithm}}})$
that corresponds to $\funct(\CF)$ under $\sL_G$? 

\medskip

The above element will \emph{almost} be described as the \emph{class} (see \cite[Sect. 3.4.3]{GKRV} for the notion of class)
$$\on{cl}(\CM,\alpha)\in \Tr(\Frob,\IndCoh_\Nilp({}'\!\LS_\cG^{\on{restr}}))\simeq \Gamma^{\IndCoh}({}'\!\LS_\cG^{\on{arithm}},\omega_{'\!\LS_\cG^{\on{arithm}}})$$
for some $\CM\in \IndCoh_\Nilp(\LS_\cG^{\on{restr}})$ and $\alpha:\CM\to \Frob(\CM)$. However, the pair $(\CM,\alpha)$, rather than being unique, will
depend on a certain set of choices. 

\begin{rem}

Assume that $\CF$ belongs to $\Shv_\Nilp(\Bun_G)^c$. Then we can take $\CM=\BL_G^{\on{restr}}(\CF)$, with $\alpha$ 
corresponding to the given weak Weil structure on $\CF$. 

\medskip

Note, however, the fact that
$$\on{cl}(\CF,\alpha)\in \Tr(\Frob,\Shv_\Nilp(\Bun_G))\simeq \sFunct_c(\Bun_G(\BF_q),\ol\BQ_\ell)$$
equals $\funct(\CF)$ is a \emph{valid but a non-trivial} assertion. In fact, this is \cite[Theorem 5.2.3]{AGKRRV3}
(it relies on the refinement of the Grothedieck-Lefschetz formula developed in \cite{GV}). 

\end{rem} 

\ssec{Spectral action: recollections}

\sssec{}

The starting point of the proof is the fact that the category $\Shv_\Nilp(\Bun_G)$ carries a canonical action 
of $\QCoh(\LS^{\on{restr}}_\cG)$. 

\sssec{}

Recall that $\LS^{\on{restr}}_\cG$ is a \emph{formal} algebraic stack. By \cite[Sect. 7.9.8]{AGKRRV1}, we can write $\LS^{\on{restr}}_\cG$ as
$$\underset{a\in A}{\on{colim}}\, \CZ_a,$$
where:

\begin{itemize} 

\item The category $A$ of indices is a filtered set;

\item The terms $\CZ_a$ are quasi-smooth and quasi-compact algebraic stacks;

\item The maps $i_a:\CZ_a\to \LS^{\on{restr}}_\cG$ are \emph{regular}\footnote{For a morphism between prestacks locally almost of finite type, 
\emph{regular} means that the (derived) normal is an (unshifted) vector bundle.}  
schematic closed embeddings.

\end{itemize} 

\medskip

Moreover, we claim:

\begin{thm} \label{t:present} 
One can choose a presentation as above in a way compatible with the action of the Frobenius, i.e., 
$$a\mapsto \CZ_a$$
is a functor with values in the category of prestacks equipped with an endomorphism. Moreover,
one ensure that: 

\medskip

\noindent{\em(a)} Each $\CZ_a$ is invariant under the Chevalley involution;

\smallskip

\noindent{\em(b)} For every index $a$, the map 
$$(\CZ_a)^{\Frob}\to (\LS^{\on{restr}}_\cG)^{\Frob}=\LS^{\on{arithm}}_\cG$$
is an isomorphism.

\end{thm}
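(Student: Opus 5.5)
We will recall how the presentation of \cite[Sect. 7.9.8]{AGKRRV1} is built, and then check that each step can be made Frobenius-equivariant and Chevalley-invariant. The formal completion structure of $\LS^{\on{restr}}_\cG$ comes from the description of $\LS^{\on{restr}}_\cG$ via a Ran-type or \'etale-fundamental-group presentation. Concretely, choose a finite set of points $\ux$ and consider the rigidified stack $\LS^{\on{restr}}_{\cG,\ux}$ as a formal affine scheme. It is written as a disjoint union over semi-simple parameters $\sigma$ of formal completions of $\Spec(R_\sigma)$ at closed points, acted on by $\cG^{\ux}$. The quasi-smooth approximations $\CZ_a$ are then obtained as follows. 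Fix a finitely generated subgroup (or a quotient through which the relevant representations factor), present $\LS$ locally as the zero locus of a map $\CV\to \CW$ of smooth (formal) schemes, and replace the formal neighbourhood of the zero section by thickenings $\CV_a$. Each $\CV_a$ is the zero locus of a regular sequence, for instance given by powers of equations of the reduced locus. This makes $\CZ_a=\CV_a\underset{\CW}\times\{0\}$ quasi-smooth and the embeddings regular.

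To make this Frobenius-equivariant, I would first observe that $\Frob$ acts on $\LS^{\on{restr}}_\cG$ by an automorphism that permutes the connected components. On the set of semisimple parameters it acts with only finitely many fixed components in the quasi-compact part, by \cite[Theorem 24.1.4]{AGKRRV1}. The indexing category can be taken to be pairs consisting of a finite $\Frob$-stable collection of components and thickening data. Because $\Frob$ is an automorphism, we can replace any choice $\CZ_a$ by the union, or scheme-theoretic sum, of its finitely many translates along a $\Frob$-orbit. On each component, with the orbit finite, we can choose the defining regular sequence to be $\Frob$-stable by averaging over $\langle\Frob\rangle$ acting through a finite quotient on the relevant finite-dimensional spaces of equations. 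The same averaging over the group generated by $\Frob$ and the Chevalley involution, which commute, gives (a). I expect the delicate point to be preserving regularity (quasi-smoothness) after symmetrization. I would handle this by working with thickenings defined by $\Frob$-stable finite-dimensional subspaces of the conormal data, such as the ideal generated by $\Frob$- and involution-stable subspaces of powers $I^n$. I would then use the flexibility of \cite[Sect. 7.9.8]{AGKRRV1} to pass to a cofinal subsystem where these are regular.

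For (b), the map $(\CZ_a)^{\Frob}\to \LS^{\on{arithm}}_\cG$ is a closed embedding, since $i_a$ is. So it suffices to show that for $a$ large it is an isomorphism, and then restrict to a cofinal subset of $A$. Since $\LS^{\on{arithm}}_\cG$ is a quasi-compact algebraic stack locally almost of finite type, its underlying reduced stack meets only finitely many components. Its structure sheaf also has bounded "Frobenius-fixed nilpotent order" in each cohomological degree. Hence $\LS^{\on{arithm}}_\cG=\on{colim}_a(\CZ_a)^{\Frob}$ with the colimit stabilizing. This uses that fixed points commute with filtered colimits of closed embeddings, and that a quasi-compact, almost-finite-type target factors through a finite stage at the truncated levels. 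This is the main obstacle: the derived structure is unbounded, so stabilization must be argued degree-by-degree. I would instead argue deformation-theoretically that, once $\CZ_a$ contains the appropriate infinitesimal neighbourhood, the cotangent complexes of $(\CZ_a)^{\Frob}$ and $\LS^{\on{arithm}}_\cG$ agree. Both are cones of $1-d\Frob$ on the respective cotangent complexes, and the normal bundle of $i_a$ contributes an acyclic cone because $\Frob$ acts on it topologically nilpotently relative to $1$. A closed embedding that is an isomorphism on reduced loci and on cotangent complexes is then an isomorphism.
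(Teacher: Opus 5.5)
Your proposal has a genuine gap at the step the theorem is really about: making the quasi-smooth thickenings $\Frob$-stable.

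\textbf{Frobenius-equivariance.} On a $\Frob$-stable component, $\Frob$ does not act ``through a finite quotient''. On $\fm/\fm^2$ of the complete local ring of the coarse moduli space $\CZ^{\on{coarse}}_\kappa$, its eigenvalues are Weil numbers of weight $\leq -1$ (\thmref{t:eigs coarse}), so they are not roots of unity and there is nothing to average over. For the same reason, the $\Frob$-translates of a given $\CZ_a$ inside one component need not be finite in number; a union of quasi-smooth substacks is not quasi-smooth anyway.

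Your fallback, thickenings cut out by $\Frob$-stable finite-dimensional subspaces of $I^n$, runs into the real issue. For a general automorphism of a complete local ring, a finite-dimensional stable subspace of $\fm$ surjecting onto $\fm/\fm^2$ need not exist. Take $\phi(x)=x+x^2$ on $\ol\BQ_\ell[[x]]$. Any such $V$ embeds into some $\fm/\fm^k$, where $\phi$ is unipotent, so $\phi-1$ would have to be nilpotent on $V$. But $\phi-1$ raises the $x$-adic order of every nonzero element of $\fm$ by exactly one, so $(\phi-1)^m\neq 0$ on $V$ for all $m$.

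So an eigenvalue-separation input is indispensable, and this is what the paper provides in three steps:
\begin{enumerate}
\item It proves \thmref{t:eigs coarse}: choose a pure weight $0$ Weil structure on the semisimple $\sigma$, apply Weil II to $H^1(X,\cg_\sigma)$, and pass to the coarse space via \propref{p:inj}, which uses Hilbert--Mumford.
\item It uses \propref{p:V} to lift the generalized eigenspaces $V\subset\fm/\fm^n$ to a $\Frob$-equivariant map $V\to\fm$; extensions with disjoint eigenvalues split uniquely.
\item It pulls back along $\CZ_\kappa\to\CZ^{\on{coarse}}_\kappa\to(V^*)^\wedge_0$ the \emph{derived} presentation $(V^*)^\wedge_0\simeq\on{colim}_n\,\{0\}\underset{\wt{\Sym}{}^n(V^*)}\times V^*$, whose terms are quasi-smooth with regular transition maps.
\end{enumerate}
Working on the coarse space also disposes of the $\cG$-equivariance that your rigidified description leaves unaddressed.

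\textbf{Part (b).} This part is missing the same input. The computation in the proof of \lemref{l:invertible} shows that (b) for a given $\CZ_a$ forces $\Frob$ to have no eigenvalue $1$ on the conormal of $\CZ_a$ at $\Frob$-fixed points. So your claim that the normal bundle contributes an acyclic cone because $\Frob$ acts ``topologically nilpotently relative to $1$'' is exactly what has to be proved. For thickenings cut out by powers of equations of the reduced locus, you have no control over it. Your colimit-stabilization argument is, as you note yourself, not justified in the derived setting, and it is not needed.

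In the paper's construction the point is immediate. The conormal of the $n$-th stage is $\Sym^n(V)$, whose Frobenius eigenvalues have weight $\leq -n$. Hence $(V^*)^{\Frob}$ and $(\wt{\Sym}{}^n(V^*))^{\Frob}$ are both $\on{pt}$, and so is the fixed locus of each stage, after reducing to one $\Frob$-stable component of $\LS^{\on{arithm}}_\cG$.

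\textbf{Part (a).} This is fine once Frobenius is handled, since the Chevalley involution generates a finite group commuting with $\Frob$. The paper simply replaces $V$ by $V\oplus V^\tau$.
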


This theorem is not explicitly stated in \cite{AGKRRV1}. We supply a proof in \secref{s:proof of present}. 

\begin{rem}

Note that point (b) of \propref{t:present} is a strengthening of \cite[Theorem 24.1.4]{AGKRRV1}. The proof
that we will give is simpler than the one in {\it loc. cit.}

\end{rem} 

\sssec{} \label{sss:det funct prel}

Choose a presentation as in \thmref{t:present}, and let $a\in A$ be an index.

\medskip

Let $i_a$ denote the (schematic) closed embedding $\CZ_a\to \LS^{\on{restr}}_\cG$. The functor 
$$(i_a)_*:\QCoh(\CZ_a)\to \QCoh(\LS^{\on{restr}}_\cG)$$
admits a left adjoint, $(i_a)^*$, and also a right adjoint $(i_a)^!$ (thanks to the regularity assumption on the embeddings 
$i_a$). 

\medskip

Note that the composition
$$(i_a)^*\circ (i_a)_*:\QCoh(\CZ_a)\to \QCoh(\CZ_a)$$
admits a filtration with associated graded given by tensoring with $\Sym(N^*_a[1])$, 
where $N_a$ is the normal to $\CZ_a$ inside $\LS^{\on{restr}}_\cG$. 

\sssec{}

Denote 
$$\CE_a:=(i_a)^*\circ (i_a)_*(\CO_{\CZ_a}),$$
so that
\begin{equation} \label{e:gr Ea}
\on{gr}(\CE_a)\simeq \Sym(N^*_a[1]).
\end{equation} 

\sssec{} \label{sss:det funct}

Note that $\CE_a$ is naturally equipped with a structure of equivariance with respect to $\Frob$; denote it by $\alpha_a$. 
Consider the corresponding element
$$\sfe_a:=\on{cl}(\CE_a,\alpha_a)\in \Tr(\Frob,\QCoh(\CZ_a))\simeq \Gamma((\CZ_a)^{\Frob},\CO_{(\CZ_a)^{\Frob}}).$$

According to \cite[Proposition 2.2.3]{KP} (see also \cite[Proposition 3.5.7]{GKRV}), the element $\sfe_a$ is explicitly given by pointwise traces of $\alpha_a$
acting on the fibers of $\CE_a$ at points of $(\CZ_a)^{\Frob}$.

\sssec{}

Note that by \thmref{t:present}(b), the restriction map
$$\Gamma(\LS^{\on{arithm}}_\cG,\CO_{\LS^{\on{arithm}}_\cG})\to \Gamma((\CZ_a)^{\Frob},\CO_{(\CZ_a)^{\Frob}})$$
is an isomorphism, so we can view $\sfe_a$ as an element of $\Gamma(\LS^{\on{arithm}}_\cG,\CO_{\LS^{\on{arithm}}_\cG})$.

\medskip

We claim:

\begin{lem} \label{l:invertible}
The function 
$$\sfe_a\in \Gamma(\LS^{\on{arithm}}_\cG,\CO_{\LS^{\on{arithm}}_\cG})$$
is invertible.
\end{lem}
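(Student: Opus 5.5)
Since $\sfe_a$ is a global function on the quasi-compact algebraic stack $\LS^{\on{arithm}}_\cG$ (identified with $(\CZ_a)^{\Frob}$ via \thmref{t:present}(b)), invertibility can be tested pointwise. It suffices to show that for every field-valued point $\sigma$ of $(\CZ_a)^{\Frob}$, the value $\sfe_a(\sigma)$ is nonzero. Indeed, a function on a (derived) algebraic stack is invertible if and only if its restriction to the underlying classical reduced stack is invertible. The latter holds if and only if the function does not vanish at any geometric point, because non-vanishing means the zero locus is an empty closed substack.

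By the description of $\sfe_a$ recalled in \secref{sss:det funct} (\cite[Proposition 2.2.3]{KP}), $\sfe_a(\sigma)$ is the supertrace of $\alpha_a$ acting on the fiber $(\CE_a)_\sigma$. Here $\sigma$ is a Frobenius-fixed point, so $\alpha_a$ induces an endomorphism of this fiber. Supertrace is additive in filtrations, and the filtration on $\CE_a$ from \secref{sss:det funct prel} is Frobenius-equivariant because it is canonical. Using \eqref{e:gr Ea}, we get
$$\sfe_a(\sigma)=\on{Tr}\bigl(\Frob,\Sym(N^*_{a,\sigma}[1])\bigr)=\det\bigl(1-F_\sigma,\,N^*_{a,\sigma}\bigr),$$
where $F_\sigma$ is the endomorphism of the fiber $N^*_{a,\sigma}$ induced by Frobenius. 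The second equality holds because $\Sym(V[1])=\Lambda^\bullet(V)$ with signs, and $\sum(-1)^k\on{Tr}(\Lambda^kF)=\det(1-F)$. Since $N_a$ is an honest vector bundle (regularity of $i_a$), the fiber is finite-dimensional, so this makes sense. If the filtration has infinitely many steps, we first check that it is finite on fibers; this holds because $N_a$ is a vector bundle, so $\Sym(N_a^*[1])$ is a finite exterior algebra.

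\textbf{Main step.} It remains to show that $1$ is not an eigenvalue of $F_\sigma$ on $N^*_{a,\sigma}$, or dually that $\on{Id}-dF_\sigma$ is invertible on $N_{a,\sigma}$. We use \thmref{t:present}(b): $(\CZ_a)^{\Frob}\to(\LS^{\on{restr}}_\cG)^{\Frob}$ is an isomorphism, so it induces an isomorphism on tangent complexes at $\sigma$. The tangent complex of a fixed-point locus is the fiber of $\on{Id}-d\Frob$ on the tangent complex of the ambient space. The tangent complexes of $\CZ_a$ and of $\LS^{\on{restr}}_\cG$ differ by the normal $N_{a,\sigma}$, which sits in degree $0$ of the cofiber $T_\sigma\CZ_a\to T_\sigma\LS^{\on{restr}}_\cG$. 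Comparing the two fiber sequences, the fiber of $\on{Id}-dF_\sigma$ on $N_{a,\sigma}$ is the cofiber of the isomorphism of tangent complexes of the fixed loci, hence zero. Therefore $\on{Id}-dF_\sigma$ is an isomorphism on $N_{a,\sigma}$, so $\det(1-F_\sigma)\neq 0$.

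The delicate point I expect is justifying the tangent-complex comparison for the formal stack $\LS^{\on{restr}}_\cG$ and the fixed-point construction. The fixed-point locus is $\CY\times_{\CY\times\CY}\CY$, so its tangent complex is $\on{Fib}(\on{Id}-d\phi)$ on $T\CY$. This is compatible with the closed embedding $i_a$ because $\Frob$ preserves $\CZ_a$, and the naturality of the diagram of fixed points gives the required map of fiber sequences. One should also check that all field-valued points of $\LS^{\on{arithm}}_\cG$ are covered, including non-closed points. This follows by base change to algebraically closed fields, where the same argument applies verbatim.
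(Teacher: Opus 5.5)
Your proposal is correct and is essentially the paper's proof. You evaluate $\sfe_a$ at a point $\sigma$ of $(\CZ_a)^{\Frob}$ as $\det(1-F_\sigma,(N^*_a)_\sigma)$ using the filtration \eqref{e:gr Ea}, and then use \thmref{t:present}(b) to show that Frobenius has no eigenvalue $1$ on the normal. Your step of applying $\on{Fib}(\on{Id}-d\Frob)$ to the fiber sequence $T_\sigma\CZ_a\to T_\sigma\LS^{\on{restr}}_\cG\to (N_a)_\sigma$ is a more careful (derived) rendering of the paper's one-line remark that the isomorphism $(T_\sigma\CZ_a)^{\Frob}\simeq (T_\sigma\LS^{\on{restr}}_\cG)^{\Frob}$ forces $((N_a)_\sigma)^{\Frob}=0$.
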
 

\begin{proof}

By \eqref{e:gr Ea} and \secref{sss:det funct}, the value of $\sfe_a$ at a closed point 
$$\sigma\in (\CZ_a)^{\Frob}=\LS^{\on{arithm}}_\cG$$
is
$$\det(\on{id}-\alpha_a,(N^*_a)_\sigma).$$

However, the fact that $\CZ_a \to \LS_\cG^{\on{restr}}$ induces an isomorphism 
of $\Frob$-fixed points implies that $\alpha_a$ does not have eigenvalue $1$ when acting on 
$(N^*_a)_\sigma$: indeed, since
$$(T_\sigma(\CZ_a))^{\Frob}\to (T_\sigma(\LS_\cG^{\on{restr}}))^{\Frob}$$
is an isomorphism, we have
$$((N_a)_\sigma)^{\Frob}=0.$$

\end{proof} 

\sssec{}

We now come back to the action of $\QCoh(\LS^{\on{restr}}_\cG)$ on $\Shv_\Nilp(\Bun_G)$. 

\medskip

For an index $a$, consider the category 
$$\QCoh(\CZ_a)\underset{\QCoh(\LS^{\on{restr}}_\cG)}\otimes \Shv_\Nilp(\Bun_G).$$

Denote by $\iota_a$ the functor
$$\QCoh(\CZ_a)\underset{\QCoh(\LS^{\on{restr}}_\cG)}\otimes \Shv_\Nilp(\Bun_G)
\overset{((i_a)_*\otimes \on{Id})}\longrightarrow \Shv_\Nilp(\Bun_G)\overset{\iota}\hookrightarrow \Shv(\Bun_G).$$

According to \cite[Corollary 15.5.5]{AGKRRV1}, the functor $\iota_a$ admits a \emph{left} adjoint, given by
the Beilinson projector $\sP_a$. 

\begin{prop} \label{p:iota comp}
The functor $\iota_a$ preserves compactness. 
\end{prop}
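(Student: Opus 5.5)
The plan is to reduce compactness of objects of the form $\iota_a(\text{compact})$ to a statement about the monad $\iota_a\circ\sP_a$ on $\Shv(\Bun_G)$, and then to verify that this monad preserves compactness using the explicit (Beilinson-projector) description of $\sP_a$ from \cite[Sect. 15]{AGKRRV1}.

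\medskip

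\emph{Step 1: compact generation.} First I would check that the category $\QCoh(\CZ_a)\underset{\QCoh(\LS^{\on{restr}}_\cG)}\otimes \Shv_\Nilp(\Bun_G)$ is compactly generated by objects of the form $\sP_a(\CF)$ with $\CF\in \Shv(\Bun_G)^c$. Indeed, $\sP_a$ is a left adjoint of a continuous functor, so it preserves compactness. Moreover $\iota_a$ is conservative, because $(i_a)_*$ is conservative and $\iota$ is fully faithful, so the objects $\sP_a(\CF)$ generate. It therefore suffices to show that
$$\iota_a\circ \sP_a:\Shv(\Bun_G)\to \Shv(\Bun_G)$$
preserves compactness.

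\medskip

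\emph{Step 2: the monad as an integral transform.} Following the construction of the Beilinson projector in \cite[Sect. 15]{AGKRRV1}, I would write $\iota_a\circ\sP_a$ as the action of the object $(i_a)_*\CO_{\CZ_a}\in\QCoh(\LS^{\on{restr}}_\cG)$, precomposed with the projector $\sP:=\iota^L$. Since $\CZ_a$ is a quasi-compact, quasi-smooth algebraic stack and $i_a$ is a regular schematic closed embedding, the pushforward $(i_a)_*\CO_{\CZ_a}$ can be expressed via the Hecke/excursion functoriality. Concretely, it is a retract of a finite colimit of objects of the form $\CO_{\LS}\otimes V$, for finitely many representations $V$ of $\cG^{\times I}$ pulled back along $\LS^{\on{restr}}_\cG\to \cG^{\times I}/\on{Ad}$ at points of $X^I$. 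The intended model is the Koszul-type resolution coming from regularity, combined with quasi-compactness of $\CZ_a$, which lets us cut down to finitely many generators. Under the spectral action, such objects act on $\Shv_\Nilp(\Bun_G)$ through Hecke functors $\on{H}(V,-)$, which are given by kernels on $\Bun_G\times\Bun_G$ and extend to all of $\Shv(\Bun_G)$.

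\medskip

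\emph{Step 3: compactness.} The composite $\iota_a\circ\sP_a$ then becomes a finite colimit, up to retract, of functors of the form
$$\CF\mapsto \on{C}^\cdot_c(X^I,\,\on{H}(V,\sP(\CF))|_{\text{fiber}}).$$
Each such functor is expressed through the Beilinson projector kernel on $\Bun_G\times \Bun_G$, as in the proof of \cite[Corollary 15.5.5]{AGKRRV1}. Hecke functors preserve compactness on $\Shv(\Bun_G)$ by properness of the Hecke stack and the ULA property of the Satake kernels. The remaining point is that the projector kernel, once cut down by the finitely many Hecke operators cutting out $\CZ_a$, becomes a compact object of $\Shv(\Bun_G\times\Bun_G)$, rather than the merely ind-compact kernel of $\sP$ itself.

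\medskip

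\emph{Main obstacle.} I expect the main obstacle to be this last step: showing that restricting to the quasi-compact $\CZ_a$ turns the a priori infinite Beilinson kernel into a compact one. I would attack it by computing $\Gamma$ of $(i_a)^!$ of the regular representation, using that $\CZ_a$ is an algebraic stack of finite type. This makes the spectral decomposition of $\CO(\cG)$ restricted to $\CZ_a$ finite in each cohomological degree, and bounded because the embedding is quasi-smooth and regular. Compactness of the kernel would then follow from the fact that it is built out of finitely many Hecke kernels, each of which is compact.
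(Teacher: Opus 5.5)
\textbf{There is a genuine gap: the step that carries all the content is not proved.}

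Your Step 1 is fine. $\sP_a$ preserves compactness as the left adjoint of the continuous functor $\iota_a$, and conservativity of $\iota_a$ gives generation, so it does suffice to show that $\iota_a\circ\sP_a$ preserves compactness. But that is the whole content of the proposition, and you do not prove it: you call the compactness of the ``cut-down'' kernel the main obstacle and offer only a heuristic.

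The heuristic also points the wrong way. A Koszul-type expression of $(i_a)_*(\CO_{\CZ_a})$ through finitely many objects acting by Hecke functors only writes $\iota_a\circ\sP_a(\CF)\simeq (i_a)_*(\CO_{\CZ_a})\otimes\sP(\CF)$ as a finite colimit of Hecke functors applied to $\sP(\CF)$. But $\sP(\CF)$ is not compact: the paper stresses that $\sP$, unlike $\sP_a$, does not preserve compactness. So ``finitely many Hecke kernels, each compact'' gives nothing. Compactness could only come from cancellation inside that finite colimit, and finiteness or boundedness of $\Gamma$ of $(i_a)^!$ applied to the regular representation does not address this.

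In fact, taken over all $a$ at once, the statement you want from a kernel computation implies that $\iota\colon\Shv_\Nilp(\Bun_G)\hookrightarrow\Shv(\Bun_G)$ itself preserves compactness. The reason is that $\Shv_\Nilp(\Bun_G)$ is compactly generated by the images under $(i_a)_*\otimes\on{Id}$ of compact objects, because $\QCoh(\LS^{\on{restr}}_\cG)$ is generated by pushforwards from the $\CZ_a$. That is the deep theorem \cite[Theorem 1.1.7]{GR}, so it will not come out of a routine estimate; it is exactly what the paper imports.

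The missing idea is to avoid passing to the monad on all of $\Shv(\Bun_G)$ and instead factor $\iota_a=\iota\circ((i_a)_*\otimes\on{Id})$. The functor $(i_a)_*\otimes\on{Id}$ has the continuous right adjoint $(i_a)^!\otimes\on{Id}$; regularity of $i_a$ is what makes $(i_a)^!$ defined, continuous and $\QCoh(\LS^{\on{restr}}_\cG)$-linear. Hence $(i_a)_*\otimes\on{Id}$ preserves compactness, and $\iota$ preserves compactness by \cite[Theorem 1.1.7]{GR}.
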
 

\begin{proof}

It suffices to show that both functors $((i_a)_*\otimes \on{Id})$ and $\iota$ preserve compactness. The functor $((i_a)_*\otimes \on{Id})$
admits a continuous right adjoint given by $((i_a)^!\otimes \on{Id})$, where
$$(i_a)^!:\QCoh(\LS^{\on{restr}}_\cG)\to \QCoh(\CZ_a)$$
is defined since $i_a$ is a regular embedding. 

\medskip

The fact that $\iota$ preserves compactness is \cite[Theorem 1.1.7]{GR}.

\end{proof} 

\ssec{Mechanism of the proof}

\sssec{}

We now state a general assertion that stands behind \thmref{t:main}: 

\begin{thm} \label{t:Z a}
For $\CF\in \Shv(\Bun_G)^c$ equipped with a weak Weil structure,
$$\funct(\CF)=\sfe_a^{-1}\cdot \funct(\iota_a\circ \sP_a(\CF)),$$
where:

\begin{itemize}

\item $\iota_a\circ \sP_a(\CF)$ is compact thanks to \propref{p:iota comp};

\item it is equipped with the weak Weil structure induced by that on $\CF$ 
(using the fact that the functors $\iota_a$ and $\sP_a$ commute with $\Frob$);

\item $\sfe_a$ is viewed as an endomorphism of $\Tr(\Frob,\Shv_\Nilp)\simeq \sFunct_c(\Bun_G(\BF_q),\ol\BQ_\ell)$ 
via the action of $\Tr(\Frob,\QCoh(\LS^{\on{restr}}_\cG))\simeq \Gamma(\LS^{\on{arithm}}_\cG,\CO_{\LS^{\on{arithm}}_\cG})$. 

\end{itemize}

\end{thm}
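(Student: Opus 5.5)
The proof will compare the two sides inside the trace $\Tr((\Frob_{\Bun})_*,\Shv(\Bun_G))$ and then apply $\on{LT}$.

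By \cite[Sect. 3.4]{GKRV}, the adjunction $\sP_a\dashv\iota_a$ is compatible with $\Frob$, and $\iota_a$ preserves compactness by \propref{p:iota comp}. Write
$$\bC_a:=\QCoh(\CZ_a)\underset{\QCoh(\LS^{\on{restr}}_\cG)}\otimes \Shv_\Nilp(\Bun_G).$$
The adjunction then gives maps of traces
$$\Tr(\iota_a):\Tr(\Frob,\bC_a)\to \Tr(\Frob,\Shv(\Bun_G)),\qquad \Tr(\sP_a):\Tr(\Frob,\Shv(\Bun_G))\to \Tr(\Frob,\bC_a).$$
The class of $\iota_a\circ\sP_a(\CF)$, with its induced weak Weil structure, equals $\Tr(\iota_a)\circ\Tr(\sP_a)(\on{cl}(\CF))$. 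Composing with $\on{LT}$ and using \secref{sss:sheaf-functions}, the claim reduces to the identity
$$\on{LT}\circ \Tr(\iota_a)\circ\Tr(\sP_a)=\sfe_a\cdot \on{LT}$$
as maps $\Tr(\Frob,\Shv(\Bun_G))\to \sFunct_c(\Bun_G(\BF_q),\ol\BQ_\ell)$.

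I will factor $\iota_a=\iota\circ((i_a)_*\otimes\on{Id})$. Correspondingly, $\sP_a$ is the composite of the left adjoint of $\iota$ (the Beilinson projector onto $\Shv_\Nilp$) with $(i_a)^*\otimes\on{Id}$. By \cite[Theorem 1.1.7]{GR} and the main theorem of \cite{AGKRRV3}, the map $\Tr(\iota)$ identifies $\Tr(\Frob,\Shv_\Nilp(\Bun_G))$ with $\sFunct_c$ through $\on{LT}$. I then expect $\on{LT}\circ\Tr(\iota)\circ\Tr(\iota^L)=\on{LT}$, since the composite is a projector onto $\Tr(\Frob,\Shv_\Nilp)$ and is an isomorphism after $\on{LT}$; if this is not formal, I would deduce it from the fact that $\on{LT}$ factors through $\Tr(\Frob,\Shv_\Nilp)$ by the \cite{AGKRRV3} isomorphism. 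So it remains to compute, on $\Tr(\Frob,\Shv_\Nilp(\Bun_G))$, the endomorphism
$$\Tr((i_a)_*\otimes\on{Id})\circ\Tr((i_a)^*\otimes\on{Id}).$$

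The functor $(i_a)_*(i_a)^*\otimes\on{Id}$ on $\Shv_\Nilp(\Bun_G)$ is the action of the object $(i_a)_*\CO_{\CZ_a}\in\QCoh(\LS^{\on{restr}}_\cG)$, with its Frobenius structure, via the spectral action. Traces are functorial for module categories over $\QCoh(\LS^{\on{restr}}_\cG)$, compatibly with the $\Tr(\Frob,\QCoh(\LS^{\on{restr}}_\cG))$-action (\cite[Sect. 3.8]{GKRV}). Hence this endomorphism is multiplication by the class of $(i_a)_*\CO_{\CZ_a}$, paired suitably with $(i_a)^*$. Concretely, the composite $\Tr(\Frob,\QCoh(\LS^{\on{restr}}_\cG))\to\Tr(\Frob,\QCoh(\CZ_a))\to\Tr(\Frob,\QCoh(\LS^{\on{restr}}_\cG))$ sends $1$ to $\on{cl}((i_a)^*(i_a)_*\CO)=\sfe_a$, using that $(\CZ_a)^{\Frob}\simeq\LS^{\on{arithm}}_\cG$ by \thmref{t:present}(b). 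The outcome is that the endomorphism is multiplication by $\sfe_a$, which is invertible by \lemref{l:invertible}.

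The main obstacle is making this last step rigorous. One has to justify that the trace of the ``restrict-then-pushforward'' adjunction on the relative tensor product is multiplication by $\sfe_a$, and not by some other function such as a determinant twist. I would handle this by writing $\Tr(\Frob,\bC_a)$ as the relative (enhanced) trace $\Tr(\Frob,\QCoh(\CZ_a))\otimes_{\Tr(\Frob,\QCoh(\LS^{\on{restr}}_\cG))}\Tr(\Frob,\Shv_\Nilp)$; rigidity of $\QCoh$ should make this valid. The computation would then reduce to the case $\Shv_\Nilp$ replaced by $\QCoh(\LS^{\on{restr}}_\cG)$ itself, where it is the Kondyrev--Prikhodko pointwise trace formula \cite[Proposition 2.2.3]{KP}.
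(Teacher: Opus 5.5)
There is a genuine gap. It sits in the step you treat as routine, not in the one you flag.

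\textbf{The factorization through $\iota^L$ is not available.} You factor $\sP_a$ through a left adjoint $\iota^L$ of $\iota\colon \Shv_\Nilp(\Bun_G)\hookrightarrow \Shv(\Bun_G)$ and call it the Beilinson projector. But the universal Beilinson projector $\sP$ is the \emph{right} adjoint of $\iota$ (by \cite[Proposition 17.2.3]{AGKRRV1} and \cite[Theorem 1.1.7]{GR}), and it does not preserve compactness. In general $\iota$ has no left adjoint at all. Such an adjoint would force $\iota$, hence $\CHom(\on{Poinc}^{\on{Vac}}_!,\iota(-))$, to commute with limits. That functor is $\Gamma^{\IndCoh}({}'\!\LS^{\on{restr}}_\cG,-)$ on $\IndCoh_\Nilp$, which fails to commute with limits as soon as $\LS^{\on{restr}}_\cG$ has formal directions (already for $G$ a torus and $X$ of genus at least $1$). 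It fails just as the forgetful functor on $t$-torsion $\ol\BQ_\ell[t]$-modules fails to commute with products. The $\sP_a$ are finite-level substitutes for this missing adjoint, which is why the paper works with the $\CZ_a$.

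\textbf{What your argument actually proves.} With $\iota^L$ removed, you get the required identity only on the image of $\Tr(\iota)$. There $\sP_a\circ\iota\simeq (i_a)^*\otimes\on{Id}$, and your spectral computation is correct. But that is the easy case $\CF\in \Shv_\Nilp(\Bun_G)^c$. The theorem is needed for objects such as $\on{Poinc}^{\on{Vac}}_!$, and for their classes you have no control over $\on{LT}$. Your fallback does not supply it either: the result of \cite{AGKRRV3} only says that $\on{LT}\circ\Tr(\iota)$ is an isomorphism. Knowing that $\on{LT}$ is an isomorphism on the image of a projector says nothing about $\on{LT}$ on its kernel. The vanishing of $\on{LT}$ there is essentially the content of the theorem.

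\textbf{The missing idea is to dualize,} so that $\on{LT}$ is only ever evaluated on $\Tr(\Frob,\Shv_\Nilp(\Bun_G))$.
\begin{itemize}
\item By \thmref{t:Frob} (from \cite{GV}) and \corref{c:Frob Nilp}, $\funct(\CF)$ is determined by the functional $\Tr(\Frob,\Phi_\CF\circ\iota)$ on $\Tr(\Frob,\Shv_\Nilp(\Bun_G))$, where $\Phi_\CF=\on{C}^\cdot_c(\Bun_G,\CF\overset{*}\otimes -)$.
\item The integral Hecke kernel description of $\iota_a\circ\sP_a$, together with the Chevalley invariance $\tau(\CZ_a)=\CZ_a$ of \thmref{t:present}(a), gives $\Phi_{\iota_a\circ\sP_a(\CF)}\simeq \Phi_\CF\circ\iota_a\circ\sP_a$, compatibly with Frobenius. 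Your proposal never uses this Chevalley invariance.
\item This moves the projector across the pairing to the $\Shv_\Nilp$ side, where $\iota_a\circ\sP_a\circ\iota\simeq \iota\circ((i_a)_*(\CO_{\CZ_a})\otimes -)$.
\end{itemize}
Only at that point does your final step apply: the class of $(i_a)_*(\CO_{\CZ_a})$ equals $\sfe_a$, because $\Tr((i_a)^*)$ is an isomorphism by \thmref{t:present}(b). That step takes a few lines, so the relative-trace machinery you planned for it is unnecessary. Your formula for $\Tr(\Frob,\bC_a)$ as a tensor product of traces over $\Tr(\Frob,\QCoh(\LS^{\on{restr}}_\cG))$ is also not valid in general, rigidity notwithstanding.
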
 

\begin{rem} 

Note that \thmref{t:Z a} provides an answer to the question in \secref{sss:describe image}, \emph{up to} the multiplication by
$\sfe_a$. 

\medskip

Indeed, we can take $\CM:=\BL_G^{\on{restr}}(((i_a)_*\otimes \on{Id})\circ \sP_a(\CF))$ with the induced 
structure of weak $\Frob$-equivariance.

\end{rem}

\begin{rem}

Our proof of \thmref{t:main} uses the substacks $\CZ_a$ (and the corresponding projectors $\sP_a$) instead of the entire 
$\LS^{\on{restr}}_\cG$ and the universal Beilinson's projector $\sP$ (see below) because this is how we produce ensure
compactness: 

\medskip

The functors $\sP_a$ preserve compactness, whereas $\sP$ does not.

\end{rem} 

\sssec{}

In the rest of this subsection we will show how \thmref{t:Z a} implies \thmref{t:main}. This will be more or less tautological.

\medskip 

Let
$$\sP:\Shv(\Bun_G)\to \Shv_\Nilp(\Bun_G)$$ denote
the universal Beilinson's projector, which, thanks to the combination of \cite[Proposition 17.2.3]{AGKRRV1} and 
\cite[Theorem 1.1.7]{GR} identifies with the right adjoint of the functor 
$$\iota:  \Shv_\Nilp(\Bun_G)\hookrightarrow \Shv(\Bun_G).$$

\medskip

Denote
$$\on{Poinc}^{\on{Vac}}_{!,\Nilp}:=\sP(\on{Poinc}^{\on{Vac}}_!).$$

Recall that the equivalence $\BL_G^{\on{restr}}$ satisfies
\begin{equation} \label{e:L of Poinc}
\BL_G^{\on{restr}}(\on{Poinc}^{\on{Vac}}_{!,\Nilp})\simeq \Xi_{'\!\LS^{\on{restr}}_\cG}(\CO_{'\!\LS^{\on{restr}}_\cG}),
\end{equation} 
see \cite[Sect. 1.1.4]{GR}. Moreover, this identification is compatible with the structure of Frobenius-equivariance on the two sides. 

\begin{rem}
In fact, according to \cite[Proposition 1.2.5]{GR}, the identification \eqref{e:L of Poinc} is equivalent to
the characterization of $\BL_G^{\on{restr}}$ given in \secref{sss:char L}.
\end{rem} 

\sssec{}

We claim:

\begin{lem} \label{l:L Poinc a}
For any index $a$,
$$\BL_G^{\on{restr}}(((i_a)_*\otimes \on{Id})\circ \sP_a(\on{Poinc}^{\on{Vac}}_!))\simeq (i_a)_*(\CO_{\CZ_a})\otimes 
\Xi_{'\!\LS^{\on{restr}}_\cG}(\CO_{'\!\LS^{\on{restr}}_\cG}).$$
Moreover, this isomorphism is compatible with the structure of Frobenius-equivariance on the two sides. 
\end{lem}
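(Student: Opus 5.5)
Since $\sP_a$ is left adjoint to $\iota_a$ and $\BL_G^{\on{restr}}$ is an equivalence of $\QCoh(\LS^{\on{restr}}_\cG)$-module categories, we reduce the lemma to a statement about the universal projector $\sP$. The plan is to show that there is a canonical isomorphism
$$((i_a)_*\otimes \on{Id})\circ \sP_a(\CF)\simeq (i_a)_*(\CO_{\CZ_a})\otimes \sP(\CF),\qquad \CF\in \Shv(\Bun_G),$$
functorial in $\CF$ and compatible with $\Frob$. Applying it to $\CF=\on{Poinc}^{\on{Vac}}_!$ and using \eqref{e:L of Poinc}, we get the claim. Here $\BL_G^{\on{restr}}$ is $\QCoh(\LS^{\on{restr}}_\cG)$-linear, and $(i_a)_*(\CO_{\CZ_a})$ is supported on the relevant components or is killed on the complementary ones, so it may be viewed on $'\!\LS^{\on{restr}}_\cG$.

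To construct the isomorphism, compare right adjoints against test objects $\CF'\in \Shv_\Nilp(\Bun_G)\otimes_{\QCoh}\QCoh(\CZ_a)$; by \propref{p:iota comp}, or just by continuity, it is enough to test against compact ones. For $\CM\in\QCoh(\CZ_a)\underset{\QCoh(\LS^{\on{restr}}_\cG)}\otimes\Shv_\Nilp(\Bun_G)$ we have
$$\CHom(\sP_a(\CF),\CM)\simeq \CHom_{\Shv(\Bun_G)}(\CF,\iota\circ((i_a)_*\otimes\on{Id})(\CM)).$$
On the other side, $(i_a)_*(\CO_{\CZ_a})\otimes -$ restricted to $\Shv_\Nilp(\Bun_G)$ factors as $((i_a)_*\otimes\on{Id})\circ((i_a)^*\otimes\on{Id})$. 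Its left-adjoint partner is then $(i_a)^*\otimes \on{Id}$ composed with $\sP$. Here $\sP$ is the left adjoint of $\iota$ in the relevant sense: the version of Beilinson's projector from \cite[Corollary 15.5.5]{AGKRRV1} is the left adjoint, and by \cite[Theorem 1.1.7]{GR} it also agrees with the right adjoint. This gives
$$\sP_a\simeq ((i_a)^*\otimes\on{Id})\circ \sP,$$
because $\iota_a=\iota\circ((i_a)_*\otimes\on{Id})$, left adjoints compose in reverse order, and $(i_a)^*\otimes\on{Id}$ is left adjoint to $(i_a)_*\otimes \on{Id}$. Applying $(i_a)_*\otimes\on{Id}$ and the projection formula $(i_a)_*(i_a)^*(-)\simeq (i_a)_*(\CO_{\CZ_a})\otimes -$ for the $\QCoh(\LS^{\on{restr}}_\cG)$-action yields the displayed isomorphism.

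The main obstacle is bookkeeping rather than mathematics. First, we must check that the projector $\sP$ appearing in \eqref{e:L of Poinc} is the same functor as the left adjoint to $\iota$ used in defining $\sP_a$, which is where \cite[Theorem 1.1.7]{GR} enters. Second, we must verify Frobenius compatibility. For the latter, observe that every step (the adjunctions, the projection formula, and \eqref{e:L of Poinc}) is canonical and that $\Frob$ acts compatibly on $\CZ_a$ by \thmref{t:present}. Hence the isomorphisms are $\Frob$-equivariant by naturality, since the $\Frob$-structures on both sides are induced from that on $\on{Poinc}^{\on{Vac}}_!$ and on $\CO$.
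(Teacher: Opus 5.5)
Your outer structure is the same as the paper's. You reduce to a functorial, Frobenius-compatible isomorphism
\[
((i_a)_*\otimes \on{Id})\circ \sP_a(-) \simeq (i_a)_*(\CO_{\CZ_a})\otimes \sP(-),
\]
evaluate it on $\on{Poinc}^{\on{Vac}}_!$, and conclude using $\QCoh(\LS^{\on{restr}}_\cG)$-linearity of $\BL_G^{\on{restr}}$ and \eqref{e:L of Poinc}.

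The gap is in how you produce this isomorphism. You derive $\sP_a\simeq ((i_a)^*\otimes\on{Id})\circ\sP$ by ``composing left adjoints''. That requires $\sP$ to be the left adjoint of $\iota$, and it is not. \cite[Corollary 15.5.5]{AGKRRV1} only says that $\sP_a$ is left adjoint to $\iota_a$. The universal projector $\sP$ is identified, via \cite[Proposition 17.2.3]{AGKRRV1} together with \cite[Theorem 1.1.7]{GR}, with the \emph{right} adjoint of $\iota$. It cannot also be a left adjoint of $\iota$: since $\iota$ is continuous, any left adjoint of it preserves compact objects, whereas $\sP$ does not preserve compactness. That failure is exactly why the paper works with the $\CZ_a$ and $\sP_a$ rather than with $\sP$.

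By adjunction alone you only get, for $\CM\in\Shv_\Nilp(\Bun_G)$,
\[
\Hom\bigl(((i_a)_*\otimes\on{Id})\circ\sP_a(\CF),\CM\bigr)\simeq \Hom\bigl(\CF,\iota\circ((i_a)_*\otimes\on{Id})\circ((i_a)^!\otimes\on{Id})(\CM)\bigr).
\]
To compare this with maps out of $(i_a)_*(\CO_{\CZ_a})\otimes\sP(\CF)$, you would need to control maps \emph{out of} $\sP(\CF)=\iota^R(\CF)$, and no adjunction provides that.

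The missing input is genuinely non-formal. It is \cite[Corollary 15.5.3]{AGKRRV1}, which the paper invokes directly and which rests on the explicit construction of the projectors as integral Hecke functors. If you cite that isomorphism in place of your adjunction argument, the remaining steps go through as you describe, including Frobenius compatibility by naturality.
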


\begin{proof}

First, by \cite[Corollary 15.5.3]{AGKRRV1}, we have
$$((i_a)_*\otimes \on{Id})\circ \sP_a(-) \simeq ((i_a)_*\otimes \on{Id})(\CO_{\CZ_a})\otimes \sP(-)$$
as functors
$$\Shv(\Bun_G)\to \Shv_\Nilp(\Bun_G).$$

Moreover, this isomorphism, being natural, is compatible with the Frobenius actions on the two sides. 

\medskip

Evaluating on $\on{Poinc}^{\on{Vac}}_!$, we obtain
$$((i_a)_*\otimes \on{Id})\circ \sP_a(\on{Poinc}^{\on{Vac}}_!) \simeq ((i_a)_*\otimes \on{Id})(\CO_{\CZ_a})\otimes \sP(\on{Poinc}^{\on{Vac}}_!).$$

Hence,
\begin{multline*} 
\BL_G^{\on{restr}}(((i_a)_*\otimes \on{Id}))\circ \sP_a(\on{Poinc}^{\on{Vac}}_!))\simeq
\BL_G^{\on{restr}}(((i_a)_*\otimes \on{Id})(\CO_{\CZ_a})\otimes \sP(\on{Poinc}^{\on{Vac}}_!))\simeq \\
\simeq 
(i_a)_*(\CO_{\CZ_a})\otimes \BL_G^{\on{restr}}(\sP(\on{Poinc}^{\on{Vac}}_!))\overset{\text{\eqref{e:L of Poinc}}}\simeq 
(i_a)_*(\CO_{\CZ_a})\otimes 
\Xi_{'\!\LS^{\on{restr}}_\cG}(\CO_{'\!\LS^{\on{restr}}_\cG}),
\end{multline*} 
where the second isomorphism is due to the fact that the functor $\BL_G^{\on{restr}}$ is
$\QCoh(\LS^{\on{restr}}_\cG)$-linear.

\end{proof} 

\sssec{}

Taking the classes of the two sides in \lemref{l:L Poinc a} with respect to their natural Frobenius-equivariance 
structures, we obtain that the isomorphism $\sL_G$ sends the class of $((i_a)_*\otimes \on{Id})\circ \sP_a(\on{Poinc}^{\on{Vac}}_!)$ 
in 
$$\Tr(\Frob,\Shv_\Nilp(\Bun_G))\simeq \sFunct_c(\Bun_G(\BF_q),\ol\BQ_\ell)$$
to the class of $(i_a)_*(\CO_{\CZ_a})\otimes \Xi_{'\!\LS^{\on{restr}}_\cG}(\CO_{'\!\LS^{\on{restr}}_\cG})$
in 
$$\Tr(\Frob, \IndCoh_\Nilp({}'\!\LS_\cG^{\on{restr}}))\simeq \Gamma^{\IndCoh}({}'\!\LS_\cG^{\on{restr}},\omega_{'\!\LS_\cG^{\on{restr}}}).$$

\sssec{}

By \cite[Theorem 5.2.3]{AGKRRV3}, the former class equals 
\begin{equation} \label{e:funct a}
\funct(\iota\circ ((i_a)_*\otimes \on{Id})\circ \sP_a(\on{Poinc}^{\on{Vac}}_!))
=\funct(\iota_a\circ \sP_a(\on{Poinc}^{\on{Vac}}_!)).
\end{equation}

Applying \thmref{t:Z a}, we obtain that the expression in \eqref{e:funct a} equals
$$\sfe_a\cdot \funct(\on{Poinc}^{\on{Vac}}_!)=\sfe_a\cdot \poinc^{\on{Vac}}_!.$$

\sssec{} \label{sss:e a}

Since the isomorphism $\sL_G$ is linear with respect to $\Gamma(\LS^{\on{arithm}}_\cG,\CO_{\LS^{\on{arithm}}_\cG})$,
it remains to show that for $a\geq a_0$, the class of $(i_a)_*(\CO_{\CZ_a})\otimes \Xi_{'\!\LS^{\on{restr}}_\cG}(\CO_{'\!\LS^{\on{restr}}_\cG})$ in
$\Gamma^{\IndCoh}({}'\!\LS_\cG^{\on{restr}},\omega_{'\!\LS_\cG^{\on{restr}}})$ (with respect to its natural Frobenius-equivariance structure) 
equals the image of 
$$\sfe_a|_{'\!\LS_\cG^{\on{arithm}}}\in \Gamma({}'\!\LS_\cG^{\on{arithm}},\CO_{'\!\LS_\cG^{\on{arithm}}})$$
under the map \eqref{e:O to omega}. 

\sssec{}

By the definition of the map \eqref{e:O to omega}, it suffices to show that the class of the object 
$(i_a)_*(\CO_{\CZ_a})$ in
$$\Tr(\Frob, \QCoh(\LS_\cG^{\on{restr}}))\simeq \Gamma(\LS_\cG^{\on{restr}},\CO_{\LS_\cG^{\on{restr}}})$$
(with respect to its natural Frobenius-equivariance structure) equals to $\sfe_a$.  
 
\medskip

Since the functor 
$$(i_a)^*:\QCoh(\LS_\cG^{\on{restr}})\to \QCoh(\CZ_a)$$
induces an isomorphism on $\Tr(\Frob,-)$ (by the assumption on $a$), it suffices to show that the class of 
$(i_a)^*\circ (i_a)_*(\CO_{\CZ_a})$
in
$$\Tr(\Frob, \QCoh(\CZ_a))\simeq \Gamma(\CZ_a,\CO_{\CZ_a})$$
equals $\sfe_a$.

\medskip

However, the latter is the definition of $\sfe_a$. 

\qed[\thmref{t:main}]

\ssec{Proof of \thmref{t:Z a}}

\sssec{}

Let $\CY$ be an algebraic stack. For $\CF\in \Shv(\CY)^c$, let $\Phi_\CF$ denote the functor 
$$\Shv(\CY)\to \Vect_{\ol\BQ_\ell}, \quad \CF'\mapsto \on{C}^\cdot_c(\CY,\CF\overset{*}\otimes \CF').$$

Note that this functor preserves compactness.

\sssec{}

Suppose that $\CY$ is defined over $\BF_q$, so that we have a well-defined geometric Frobenius endomorphism
$\Frob_\CY$. Recall (see \cite[Sect. 22.2]{AGKRRV1}) that we have a canonically defined map
$$\Tr((\Frob_\CY)_*,\Shv(\CY)) \overset{\on{LT}}\to \sFunct_c(\CY(\BF_q),\ol\BQ_\ell).$$

\medskip

Let $\CF$ as above be equipped with a weak Weil structure. This structure makes the diagram
\begin{equation} \label{e:Frob funct}
\CD
\Shv(\CY) @>{\Phi_\CF}>> \Vect_{\ol\BQ_\ell} \\
@V{(\Frob_\CY)_*}VV @VV{\on{Id}}V \\
\Shv(\CY)@>{\Phi_\CF}>>   \Vect_{\ol\BQ_\ell}
\endCD
\end{equation}
commute \emph{up to a natural transformation}. Hence, \eqref{e:Frob funct} gives rise to a map
$$\Tr((\Frob_\CY)_*,\Shv(\CY)) \overset{\Tr(\Frob,\Phi_\CF)}\longrightarrow \Tr(\on{Id}, \Vect_{\ol\BQ_\ell})=\ol\BQ_\ell.$$

\medskip

The following results from \cite[Theorem 0.4]{GV}:

\begin{thm} \label{t:Frob}
The diagram
\begin{equation} \label{e:Frob}
\CD
\Tr((\Frob_\CY)_*,\Shv(\CY))  @>{\Tr(\Frob,\Phi_\CF)}>> \ol\BQ_\ell \\
@V{\on{LT}}VV @VV{\on{id}}V \\
\sFunct_c(\CY(\BF_q),\ol\BQ_\ell) @>{\langle -,\funct(\CF)\rangle_{\CY(\BF_q)}}>> \ol\BQ_\ell 
\endCD
\end{equation}
commutes.
\end{thm}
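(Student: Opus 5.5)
The plan is to factor $\Phi_\CF$ into elementary operations. For each operation, the map it induces on $\Tr(\Frob,-)$ should be computable under $\on{LT}$ by the sheaves-functions dictionary of \secref{sss:sheaf-functions funct}, in its refined \emph{trace-level} form as established in \cite{GV}.

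\medskip

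\noindent\textbf{Step 1: reduce to the quasi-compact case.} Since $\CF$ is compact, it is the extension by zero from a quasi-compact open $j:\CU\hookrightarrow \CY$ defined over $\BF_q$. Then $\Phi_\CF\simeq \Phi_{j^*\CF}\circ j^*$. The functor $j^*$ commutes with Frobenius and has a continuous right adjoint $j_*$. On traces it corresponds under $\on{LT}$ to restriction of functions, by the compatibility of $\on{LT}$ with $*$-pullback along open embeddings \cite[Sect. 22.2.11]{AGKRRV1}. So from now on I assume $\CY$ is quasi-compact.

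\medskip

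\noindent\textbf{Step 2: factor $\Phi_\CF$.} Write
$$\Phi_\CF\simeq p_!\circ \Delta^*\circ (\CF\boxtimes -),$$
where $\CF\boxtimes -:\Shv(\CY)\to \Shv(\CY\times\CY)$, $\Delta$ is the diagonal, and $p:\CY\to \on{pt}$. Each functor is compatible with Frobenius: $\Delta^*$ and $p_!$ strictly, and $\CF\boxtimes-$ up to the natural transformation given by the weak Weil structure $\alpha$. Each also admits a continuous right adjoint; for the first, this holds because $\CF$ is compact. By the functoriality of categorical traces under composition \cite[Sect. 3.4]{GKRV}, $\Tr(\Frob,\Phi_\CF)$ is the composite of the three induced maps on traces.

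\medskip

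\noindent\textbf{Step 3: compute each induced map under $\on{LT}$.}
\begin{itemize}
\item For $\CF\boxtimes -$: under the Künneth identification $\Tr(\Frob,\Shv(\CY\times\CY))\simeq \Tr(\Frob,\Shv(\CY))^{\otimes 2}$, the induced map is $x\mapsto \on{cl}(\CF,\alpha)\otimes x$.
\item For $\Delta^*$: I expect $\on{LT}$ to intertwine the induced map with pullback of functions along the diagonal. This is the trace-level form of the first two bullets of \secref{sss:sheaf-functions funct}.
\item For $p_!$: I expect $\on{LT}$ to intertwine the induced map with $\int_{\CY(\BF_q)}$. This is the trace-level Grothendieck--Lefschetz formula of \cite{GV}.
\end{itemize}
Combining these with $\on{LT}(\on{cl}(\CF,\alpha))=\funct(\CF)$ (the equality $\on{LT}^{\on{naive}}=\on{LT}^{\on{true}}$) gives, for every $x$ in the trace,
$$\Tr(\Frob,\Phi_\CF)(x)=\int_{\CY(\BF_q)}\on{LT}(x)\cdot \funct(\CF)=\langle \on{LT}(x),\funct(\CF)\rangle_{\CY(\BF_q)},$$
which is exactly the commutativity of \eqref{e:Frob}.

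\medskip

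\noindent\textbf{Main obstacle.} The hard step is the third bullet, together with the second: the compatibility of $\on{LT}$ with $p_!$ and $\Delta^*$ must hold at the level of the whole trace $\Tr((\Frob_\CY)_*,\Shv(\CY))$. This is strictly stronger than the statement for classes of individual Weil sheaves, because the trace is not spanned by such classes. This is precisely where the description of $\on{LT}$ as $\on{LT}^{\on{true}}$, i.e.\ $\on{LT}_\phi$ valued in $\on{C}^\cdot(\CY^\phi,\omega_{\CY^\phi})$, is needed, with its functoriality for proper and smooth-type correspondences as proved in \cite[Theorem 0.4]{GV}. I would invoke that theorem directly at this step rather than reprove it.
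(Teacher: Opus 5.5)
Your argument is in substance the paper's. The paper proves the statement by a single citation of \cite[Theorem 0.4]{GV}, and you also hand the only non-formal step to that theorem.

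One step in your scaffolding is false as written, however. The K\"unneth identification $\Tr(\Frob,\Shv(\CY\times\CY))\simeq\Tr(\Frob,\Shv(\CY))^{\otimes 2}$ is not available. For $\ell$-adic sheaves the functor $\Shv(\CY)\otimes\Shv(\CY)\to\Shv(\CY\times\CY)$ is fully faithful but not an equivalence. For instance, the Artin--Schreier sheaf of $xy$ on $\BA^1\times\BA^1$ is not in its image: its restrictions to the lines $\{a\}\times\BA^1$ are pairwise non-isomorphic for infinitely many $a$, which cannot happen for an object built from finitely many external products. So a priori you only get a map of traces, and your description of $\Tr(\Frob,\CF\boxtimes -)$ is unjustified.

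The detour through $\CY\times\CY$ is also unnecessary. Write $\Phi_\CF\simeq p_!\circ(\CF\overset{*}\otimes -)$ instead. We have $i_y^*(\CF\overset{*}\otimes\CF')\simeq\CF_y\otimes i_y^*(\CF')$ compatibly with the Frobenius structures. Hence the naive description of $\on{LT}$ gives directly, on the whole trace, $\on{LT}\circ\Tr(\Frob,\CF\overset{*}\otimes -)=\funct(\CF)\cdot\on{LT}(-)$. For the same reason your $\Delta^*$ bullet is tautological, since $i_y^*\circ\Delta^*\simeq i_{(y,y)}^*$. It is not part of the difficulty, contrary to what you say in your last paragraph.

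With that repair, everything reduces to the trace-level Grothendieck--Lefschetz formula $\Tr(\Frob,p_!)=\int_{\CY(\BF_q)}\on{LT}(-)$. When the constant sheaf is compact (e.g.\ $\CY$ a quasi-compact scheme), this formula is literally the case $\CF=\ol\BQ_\ell$ of the theorem. So your decomposition shows that the general compact $\CF$ follows formally from the constant-sheaf case, which is a fair observation. It does not, however, reduce the actual input. The content still lies entirely in the cited result of \cite{GV}, which the paper applies to the statement directly.
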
 

\sssec{}

Recall by \cite[Theorem 5.2.3]{AGKRRV3}, the isomorphism
$$\Tr(\Frob,\Shv_\Nilp(\Bun_G))\simeq \sFunct_c(\Bun_G(\BF_q),\ol\BQ_\ell)$$
equals the composition
$$\Tr(\Frob,\Shv_\Nilp(\Bun_G)) \overset{\Tr(\Frob,\iota)}\longrightarrow 
\Tr(\Frob,\Shv(\Bun_G)) \overset{\on{LT}}\longrightarrow \sFunct_c(\Bun_G(\BF_q),\ol\BQ_\ell).$$

From \thmref{t:Frob} we obtain:

\begin{cor} \label{c:Frob Nilp}
Let $\CF$ be an object of $\Shv(\Bun_G)^c$ equipped with a weak Weil structure. Then the following digram commutes
\begin{equation} \label{e:Frob Nilp}
\CD
\Tr(\Frob,\Shv_\Nilp(\Bun_G))  @>{\Tr(\Frob,\Phi_\CF\circ \iota)}>> \ol\BQ_\ell \\
@V{\simeq}VV @VV{\on{id}}V \\
\sFunct_c(\Bun_G(\BF_q),\ol\BQ_\ell) @>{\langle -,\funct(\CF)\rangle_{\Bun_G(\BF_q)}}>> \ol\BQ_\ell 
\endCD
\end{equation}
commutes.
\end{cor}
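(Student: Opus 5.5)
We deduce \corref{c:Frob Nilp} from \thmref{t:Frob} (applied with $\CY=\Bun_G$, or rather its quasi-compact open substacks, as in \cite[Sect. 22.2.11]{AGKRRV1}) by functoriality of categorical traces. The diagram \eqref{e:Frob Nilp} factors as the concatenation of two squares. The first square has left vertical arrow $\Tr(\Frob,\iota)$, from $\Tr(\Frob,\Shv_\Nilp(\Bun_G))$ to $\Tr((\Frob_{\Bun})_*,\Shv(\Bun_G))$. The second square is \eqref{e:Frob}, with left vertical arrow $\on{LT}$.

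First, by \cite[Theorem 5.2.3]{AGKRRV3}, the left vertical isomorphism in \eqref{e:Frob Nilp} equals the composition $\on{LT}\circ \Tr(\Frob,\iota)$. The map $\Tr(\Frob,\iota)$ is well defined because $\iota$ commutes with Frobenius and admits a continuous right adjoint. The latter holds because $\iota$ preserves compactness by \cite[Theorem 1.1.7]{GR}.

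Second, we use the compatibility of traces with composition of functors equipped with commutation data (\cite[Sect. 3.4]{GKRV}). The functor $\Phi_\CF\circ\iota$ carries a lax commutation with Frobenius, obtained by composing the strict commutation $\iota\circ(\Frob_{\Bun})_*\simeq(\Frob_{\Bun})_*\circ\iota$ with the natural transformation in \eqref{e:Frob funct} coming from the weak Weil structure on $\CF$. Hence
$$\Tr(\Frob,\Phi_\CF\circ\iota)=\Tr(\Frob,\Phi_\CF)\circ\Tr(\Frob,\iota).$$
The only point needing care is the right-adjointability hypothesis of this functoriality. This holds because $\Phi_\CF$ and $\iota$ both preserve compactness, so they have continuous right adjoints.

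Combining the two steps with \thmref{t:Frob}, we get
$$\Tr(\Frob,\Phi_\CF\circ\iota)=\Tr(\Frob,\Phi_\CF)\circ\Tr(\Frob,\iota)=\langle \on{LT}(\Tr(\Frob,\iota)(-)),\funct(\CF)\rangle_{\Bun_G(\BF_q)}.$$
This is exactly the commutativity of \eqref{e:Frob Nilp}. The main (mild) obstacle is that $\Bun_G$ is not quasi-compact, while \thmref{t:Frob} is stated for quasi-compact $\CY$. We handle this by noting that $\CF$ is compact, hence supported on a quasi-compact open $U\subset\Bun_G$. The functor $\Phi_\CF$ then factors through restriction to $U$, which is compatible with $\on{LT}$ for the open embedding (the pullback property of \secref{sss:sheaf-functions funct}). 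This reduces the statement to \thmref{t:Frob} for $U$.
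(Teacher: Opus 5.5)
Your proposal is correct and is essentially the paper's argument: the paper also identifies the left vertical isomorphism with $\on{LT}\circ\Tr(\Frob,\iota)$ via \cite[Theorem 5.2.3]{AGKRRV3}, and then reads off the corollary from \thmref{t:Frob} by functoriality of traces under composition. Your final reduction to a quasi-compact open is harmless but unnecessary, since the paper formulates \thmref{t:Frob} for an arbitrary (not necessarily quasi-compact) algebraic stack $\CY$, with target $\sFunct_c(\CY(\BF_q),\ol\BQ_\ell)$.
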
 

\sssec{}

Applying \corref{c:Frob Nilp}, we obtain that in order to prove \thmref{t:Z a}, we have to show that the map
\begin{equation} \label{e:LHS}
\Tr(\Frob,\Shv_\Nilp(\Bun_G)) \overset{\Tr(\Frob,\Phi_{\iota_a\circ \sP_a(\CF)}\circ \iota)}\longrightarrow \ol\BQ_\ell
\end{equation} 
equals the map
\begin{equation} \label{e:RHS}
\Tr(\Frob,\Shv_\Nilp(\Bun_G)) \overset{\sfe_a\cdot (-)}\longrightarrow \Tr(\Frob,\Shv_\Nilp(\Bun_G)) 
\overset{\Tr(\Frob,\Phi_\CF\circ \iota)}\longrightarrow \ol\BQ_\ell.
\end{equation} 

\sssec{}

Note that the explicit description of $\iota_a\circ \sP_a$ as an integral Hecke functor (see \cite[Sect. 15.3.3]{AGKRRV1})
implies that 
for any $\CF_1,\CF_2\in \Shv(\Bun_G)$, we have a canonical isomorphism
$$\on{C}^\cdot_c(\Bun_G,\CF_1\overset{*}\otimes (\iota_a\circ \sP_a(\CF_2)))\simeq
\on{C}^\cdot_c(\Bun_G,(\iota_{a^\tau}\circ \sP_{a^\tau}(\CF_1))\overset{*}\otimes\CF_2),$$
Moreover, this is isomorphism is compatible with the Frobenius actions.

\medskip

In the above formula, $a^\tau$ is such that
$$\CZ_{a^\tau}=\tau(\CZ_a),$$
where $\tau$ is the Chevalley involution. Recall, however, that according to \thmref{t:present}, we can assume that $Z_a$ 
is $\tau$-invariant, so that $a^\tau=a$. 

\medskip

From here we obtain that there is a canonical isomorphism of functors
$$\Phi_{\iota_a\circ \sP_a(\CF)}\simeq \Phi_\CF \circ \iota_a\circ \sP_a,$$
compatible with the Frobenius actions.

\sssec{}

Note now there is also a canonical isomorphism
$$\iota_a\circ \sP_a\circ \iota \simeq \iota\circ ((i_a)_*(\CO_{\CZ_a})\otimes (-)),$$
compatible with the Frobenius actions.

\medskip

Combining, we obtain an isomorphism
$$\Phi_{\iota_a\circ \sP_a(\CF)}\circ \iota \simeq \Phi_\CF \circ \iota \circ ((i_a)_*(\CO_{\CZ_a})\otimes (-)),$$
compatible with the Frobenius actions.

\medskip

Thus, we can rewrite the map in \eqref{e:LHS} as
%\begin{equation} \label{e:LHS bis}
$$
\Tr(\Frob,\Shv_\Nilp(\Bun_G)) \overset{\Tr(\Frob,((i_a)_*(\CO_{\CZ_a})\otimes (-)))}\longrightarrow 
\Tr(\Frob,\Shv_\Nilp(\Bun_G)) 
\overset{\Tr(\Frob,\Phi_\CF\circ \iota)}\longrightarrow \ol\BQ_\ell.$$
%\end{equation} 

\sssec{}

Hence, in order to show that \eqref{e:LHS} equals \eqref{e:RHS}, it remains to show that the map
$$\Tr(\Frob,\Shv_\Nilp(\Bun_G)) \overset{\Tr(\Frob,((i_a)_*(\CO_{\CZ_a})\otimes (-)))}\longrightarrow 
\Tr(\Frob,\Shv_\Nilp(\Bun_G))$$
is given by the action of $\sfe_a$.

\medskip

This would follow from the assertion that the class of the object $(i_a)_*(\CO_{\CZ_a})$ in
$$\Tr(\Frob, \QCoh(\LS_\cG^{\on{restr}}))\simeq \Gamma(\LS_\cG^{\on{restr}},\CO_{\LS_\cG^{\on{restr}}})$$
(with respect to its natural Frobenius-equivariance structure) equals to $\sfe_a$.  However, this has been
already established in \secref{sss:e a}. 

\qed[\thmref{t:Z a}]

\section{Filtration by nilpotent orbits } \label{s:filtr}

In this section we introduce the filtration on the space of automorphic functions, indexed by
the poset of nilpotent orbits in $\cg$. 

\medskip

We define this filtration geometrically (by taking the trace
of Frobenius on an appropriate category) on the spectral side, and then transfer it to
the geometric side by the isomorphism $\sL_G$. 

\medskip

We prove that the $0$-th term of the filtration corresponds to the subspace of functions
generated by the basic Whittaker (a.k.a. vacuum Poincar\'e) function. 

\medskip

We formulate a key conjecture that says that this filtration is defined over $\BQ$.

\ssec{The filtration on the spectral side} \label{ss:filtr spec}

\sssec{}

Recall (see \cite[Sect. 21.2.4]{AGKRRV1}) that points of $\Sing(\LS_\cG^{\on{restr}})$ are in bijection with pairs
$$(\sigma,A),\quad \sigma\in \LS_\cG^{\on{restr}},\,\, A\in H^0(X,\cg^*_\sigma).$$

Hence, to every closed conical $\on{Ad}$-invariant subset $Y\subset \cg^*$, we can associate a closed conical 
subset of $\Sing(\LS_\cG^{\on{restr}})$; by a slight abuse of notation, we will denote it by the same symbol $Y$. 

\medskip

In what follows, we will identify $\cg^*$ with $\cg$ using an $\on{Ad}$-invariant bilinear form. So, we can think
of $Y$ as above as a conical $\on{Ad}$-invariant subset $Y\subset \cg$. We will be interested in closed 
$\on{Ad}$-invariant subsets contained in the nilpotent cone of $\cg$; note that any such is automatically conical. 

\sssec{}

For $Y$ as above consider the corresponding full subcategory
\begin{equation} \label{e:embed Y}
\IndCoh_Y(\LS_\cG^{\on{restr}})\subset \IndCoh_\Nilp(\LS_\cG^{\on{restr}}).
\end{equation}

By \cite[Sect. 11.1.6]{AG1}, this subcategory is compactly generated, and the above embedding preserves
compact objects.

\medskip

The subcategory \eqref{e:embed Y} is stable under the action of the Frobenius endofunctor. 
Hence, we can consider
\begin{equation} \label{e:Y Tr}
\Tr(\Frob,\IndCoh_Y(\LS_\cG^{\on{restr}}))\in \Vect_{\ol\BQ_\ell}.
\end{equation} 

For every $Y_1\subset Y_2$ we have an embedding
$$\IndCoh_{Y_1}(\LS_\cG^{\on{restr}})\subset \IndCoh_{Y_2}(\LS_\cG^{\on{restr}})$$
that preserves compactness. Hence, it induces a map
\begin{equation} \label{e:Y12 Tr}
\Tr(\Frob,\IndCoh_{Y_1}(\LS_\cG^{\on{restr}}))\to \Tr(\Frob,\IndCoh_{Y_2}(\LS_\cG^{\on{restr}})).
\end{equation} 

In particular, the embedding \eqref{e:embed Y} induces a map
\begin{multline} \label{e:embed Y Tr}
\Tr(\Frob,\IndCoh_Y(\LS_\cG^{\on{restr}}))\to \Tr(\Frob,\IndCoh_{\Nilp}(\LS_\cG^{\on{restr}}))\simeq \\
\simeq \Gamma^{\IndCoh}(\LS^{\on{arithm}}_\cG,\omega_{\LS^{\on{arithm}}_\cG})\simeq 
\Gamma(\LS^{\on{arithm}}_\cG,\Psi_{\LS^{\on{arithm}}_\cG}(\omega_{\LS^{\on{arithm}}_\cG})).
\end{multline} 

\begin{rem}
If we consider the enhanced trace relative to the action of $\QCoh(\LS_\cG^{\on{restr}})$ instead of the absolute trace, to every $Y$
as above we can attach an object 
$$\Tr^{\on{enh}}_{\QCoh(\LS_\cG^{\on{restr}})}(\Frob,\IndCoh_Y(\LS_\cG^{\on{restr}}))\in 
\on{HH}(\Frob,\QCoh(\LS_\cG^{\on{restr}}))\simeq \QCoh(\LS_\cG^{\on{arithm}}),$$
so that
$$\Tr(\Frob,\IndCoh_Y(\LS_\cG^{\on{restr}})) \simeq \Gamma(\LS_\cG^{\on{arithm}},\Tr^{\on{enh}}_{\QCoh(\LS_\cG^{\on{restr}})}(\Frob,\IndCoh_Y(\LS_\cG^{\on{restr}}))).$$

For example, for $Y=\Nilp$, we have 
$$\Tr^{\on{enh}}_{\QCoh(\LS_\cG^{\on{restr}})}(\Frob,\IndCoh_Y(\LS_\cG^{\on{restr}}))=\Psi_{\LS_\cG^{\on{arithm}}}(\omega_{\LS_\cG^{\on{arithm}}})$$ 
and for $Y=\{0\}$, we have 
$$\Tr^{\on{enh}}_{\QCoh(\LS_\cG^{\on{restr}})}(\Frob,\IndCoh_Y(\LS_\cG^{\on{restr}}))=\CO_{\LS_\cG^{\on{arithm}}}.$$

\medskip

Furthermore, the maps \eqref{e:Y12 Tr} upgrade to maps
\begin{equation} \label{e:Y12 Tr upgrade}
\Tr^{\on{enh}}_{\QCoh(\LS_\cG^{\on{restr}})}(\Frob,\IndCoh_{Y_1}(\LS_\cG^{\on{restr}}))\to
\Tr^{\on{enh}}_{\QCoh(\LS_\cG^{\on{restr}})}(\Frob,\IndCoh_{Y_2}(\LS_\cG^{\on{restr}})).
\end{equation}

For $Y_1=\{0\}$ and $Y_2=\Nilp$, the map \eqref{e:Y12 Tr upgrade} is, by construction, 
the map \eqref{e:Xi Tr bis refined}. 

\end{rem} 

\sssec{}

Let $\bO\subset \CN$ be a nilpotent orbit, and let $\ol\bO$ be its closure. Consider the embedding
$$\IndCoh_{\ol\bO-\bO}(\LS_\cG^{\on{restr}})\subset \IndCoh_{\ol\bO}(\LS_\cG^{\on{restr}}),$$
and the quotient
$$\IndCoh_\bO(\LS_\cG^{\on{restr}}):=\IndCoh_{\ol\bO}(\LS_\cG^{\on{restr}})/\IndCoh_{\ol\bO-\bO}(\LS_\cG^{\on{restr}}).$$

The following conjecture was proposed jointly with D.~Kazhdan: 

\begin{conj} \label{c:gr} 
The object
$$\Tr(\Frob,\IndCoh_\bO(\LS_\cG^{\on{restr}}))\in \Vect_{\ol\BQ_\ell}$$
is concentrated in cohomological degree $0$.
\end{conj} 

In fact, this conjecture is a theorem-in-progress, and is the subject of the forthcoming work \cite{GaLaRa}.

\sssec{}

A particular case of \conjref{c:gr} for $Y=\{0\}$ is much simpler, and has been recently settled in \cite[Theorem 3.7.2]{GaLiRe}. I.e., we have:

\begin{thm} \label{t:exc} 
The object 
$$\Tr(\Frob,\IndCoh_{\{0\}}(\LS_\cG^{\on{restr}}))\simeq \Tr(\Frob,\QCoh(\LS_\cG^{\on{restr}}))\simeq \Gamma(\LS_\cG^{\on{arithm}},\CO_{\LS_\cG^{\on{arithm}}})$$
is concentrated in cohomological degree $0$.
\end{thm}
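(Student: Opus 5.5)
The identifications in the statement are formal consequences of earlier results; the one piece of real content is cohomological concentration, and for it we will use the presentation of \thmref{t:present}.

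\textbf{Step 1: reduce to $\QCoh$.} The subcategory $\IndCoh_{\{0\}}(\LS_\cG^{\on{restr}})$ identifies with $\QCoh(\LS_\cG^{\on{restr}})$ via $\Xi$. Since $\Xi$ is an equivalence onto this subcategory, Frobenius-compatible up to the invertible twist of \eqref{e:Xi diag}, it induces an isomorphism of traces.

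\textbf{Step 2: compute the trace on $\QCoh$.} The trace of $\Frob^*$ on $\QCoh(\LS_\cG^{\on{restr}})$ is $\Gamma(\LS_\cG^{\on{arithm}},\CO)$, using the general formula $\Tr(\phi^*,\QCoh(\CY))\simeq\Gamma(\CY^\phi,\CO)$ already quoted for the construction of \eqref{e:O to omega}. To make this precise for the formal stack, write $\QCoh(\LS^{\on{restr}}_\cG)$ as a limit (equivalently, a colimit under $(i_a)_*$) of $\QCoh(\CZ_a)$ with $\CZ_a$ as in \thmref{t:present}. By \thmref{t:present}(b), $(i_a)^*$ induces an isomorphism on traces, and this reduces the computation to $\Gamma((\CZ_a)^{\Frob},\CO)=\Gamma(\LS_\cG^{\on{arithm}},\CO)$ for a single $\CZ_a$.

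\textbf{Step 3: concentration in degree $0$.} This is the main obstacle. $\LS_\cG^{\on{arithm}}=(\CZ_a)^{\Frob}$ is a fixed-point stack of a quasi-smooth stack, hence quasi-quasi-smooth of virtual dimension $0$. It may have derived structure and stabilizers, so two things must be shown:
\begin{itemize}
\item The higher derived global sections vanish. Here I would use that $\LS_\cG^{\on{arithm}}$ is quasi-compact, and that at each point the stabilizer is reductive up to unipotent pieces whose cohomology pairs off against the derived directions.
\item Negative-degree homotopy of $\CO$ contributes nothing to $\Gamma$.
\end{itemize}

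The approach I would try first is a local one. Near a semisimple point $\sigma$, present the stack via Luna-type slices as $\Spec(R)/H$, with $H=\on{Aut}(\sigma)$ reductive. Then $R$ is the derived Frobenius-fixed locus, a Koszul-type complex on $H^1$ with relations given by $H^2$, where $H^2$ is Poincaré-dual to $H^0$, the Lie algebra of $H$.

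A count using the Euler characteristic (which is $0$ for Weil cohomology) and the duality $H^2\simeq (H^0)^*$ should show:
\begin{itemize}
\item The negative cohomology of $R$ is killed after taking $H$-invariants, because the classes in degree $-1$ are dual to $\mathrm{Lie}(H)$ and pair against the moment-map-like relation.
\item Invariants under a reductive $H$ are exact.
\end{itemize}

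Gluing the local statements by the quasi-compactness of $\LS_\cG^{\on{arithm}}$ then yields the claim. This is exactly the content of \cite[Theorem 3.7.2]{GaLiRe}, which I would invoke rather than reprove if the local analysis becomes unwieldy.
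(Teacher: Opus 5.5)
Your Steps 1 and 2 are identifications the paper takes for granted. The fallback you name at the end, invoking \cite[Theorem 3.7.2]{GaLiRe}, is the paper's entire proof of \thmref{t:exc}; the paper gives no independent argument. The only thing it adds is the remark that non-positivity is automatic: $\LS^{\on{arithm}}_\cG$ is a quotient of an affine derived scheme by the reductive group $\cG$, and taking invariants is exact in characteristic $0$. So your first bullet needs no pointwise analysis of stabilizers, and ``unipotent pieces pairing off against derived directions'' plays no role. As a proof by citation, your proposal therefore agrees with the paper. The local argument you offer as your primary route for negative degrees, however, has a genuine gap and would not go through as written.

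The local model is misidentified. The tangent complex of $\LS^{\on{arithm}}_\cG$ at $\sigma$ is $\on{C}^\cdot_{\on{Weil}}(X,\cg_\sigma)[1]$, with cohomology $H^0_{\on{Weil}},\dots,H^3_{\on{Weil}}$ in degrees $-1,\dots,2$. The picture ``Koszul complex on $H^1$ with relations $H^2\simeq (H^0)^*$, paired by a moment map'' describes $\LS^{\on{restr}}_\cG$ itself, via two-dimensional duality for geometric cohomology. It does not describe the Frobenius-fixed points. For the Weil group the duality is three-dimensional and Tate-twisted: $H^i_{\on{Weil}}(X,\cg_\sigma)\simeq H^{3-i}_{\on{Weil}}(X,\cg_\sigma(1))^*$. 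Consequently:
\begin{itemize}
\item the degree $-1$ generators of $R$ (dual to $H^2_{\on{Weil}}$) are not dual to $\Lie(H)$;
\item $R$ also has degree $-2$ generators dual to $H^3_{\on{Weil}}$, which is exactly what makes $\LS^{\on{arithm}}_\cG$ not eventually coconnective.
\end{itemize}
So $R$ has negative cohomology in unbounded degrees. An Euler characteristic count, which only gives virtual dimension $0$, cannot control that.

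Concretely, take $\cG=SL_2$ and $\sigma=\chi\oplus\chi^{-1}$ with $\chi^2\simeq\ol\BQ_\ell(1)$. Then $H=\BG_m$ and $H^1_{\on{Weil}}(X,\cg_\sigma)=\ct$. Meanwhile $H^2_{\on{Weil}}$ and $H^3_{\on{Weil}}$ are the one-dimensional spaces $H^2(X,\chi^2)^{\Frob}$ and $H^2(X,\chi^2)_{\Frob}$. They come from the root line $\cg_\alpha\simeq\chi^2$, so they have $\BG_m$-weight $2$, unlike $\Lie(H)=\ct$. What kills the invariants here is that every monomial of $R$ involving a negative-degree generator has nonzero weight for this torus; no moment-map pairing is involved.

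In general the needed input is a Frobenius-weight argument. At a semisimple $\sigma$ pure of weight $0$, Weil~II gives $H^2_{\on{Weil}}=H^3_{\on{Weil}}=0$. At other semisimple points, the derived directions come from the parts of $\cg_\sigma$ of Frobenius weight $-1,-2$, and the tangent directions from weights $0,-1$. Hence the weight-grading element, which lies in the center of $\Lie(\on{Aut}(\sigma))$, assigns nonzero weight to every negative-degree monomial. That idea is missing from your sketch, so without the citation your Step 3 is not established.
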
 

\begin{rem} 
Recall that $\LS_\cG^{\on{arithm}}$ can be exhibited as the quotient of an affine (derived) scheme 
by a reductive group. Hence, $\Gamma(\LS_\cG^{\on{arithm}},\CO_{\LS_\cG^{\on{arithm}}})$ is 
a priori concentrated in non-positive cohomological degrees. Thus, the content of \thmref{t:exc}
is that this algebra does not have negative cohomology either. 

\medskip

Note, however, that (unless $\cG$ is a torus), the stack $\LS_\cG^{\on{arithm}}$ is not classical
(and \emph{not} eventually coconnective). So, the structure sheaf of $\LS_\cG^{\on{arithm}}$ does
have negative cohomology, but all of it vanishes after taking global sections.

\end{rem} 

\sssec{}

For the rest of this subsection we will assume \conjref{c:gr}. We claim:

\begin{cor} \label{c:filtr}  For every closed $\on{Ad}$-invariant subset $Y\subset \Nilp$, the object \eqref{e:Y Tr} is concentrated in cohomological degree $0$, and the map
\eqref{e:embed Y Tr} is injective. 
\end{cor}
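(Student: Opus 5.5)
We argue by induction on $Y$ via the stratification by nilpotent orbits, using additivity of categorical traces in localization sequences.

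Given $Y$, choose an orbit $\bO\subset Y$ open in $Y$ (e.g. of maximal dimension), and set $Y'=Y-\bO$. Then $Y'$ is again closed and $\on{Ad}$-invariant, and $Y'\cap \ol\bO=\ol\bO-\bO$. We have the short exact sequence of compactly generated categories
$$\IndCoh_{Y'}(\LS_\cG^{\on{restr}})\to \IndCoh_{Y}(\LS_\cG^{\on{restr}})\to \IndCoh_Y(\LS_\cG^{\on{restr}})/\IndCoh_{Y'}(\LS_\cG^{\on{restr}}).$$
The first functor preserves compacts, by \cite[Sect. 11.1.6]{AG1}, and everything is Frobenius-stable.

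The first step is to identify the quotient with $\IndCoh_\bO(\LS_\cG^{\on{restr}})$. The functor $\IndCoh_{\ol\bO}\to \IndCoh_Y\to \IndCoh_Y/\IndCoh_{Y'}$ kills $\IndCoh_{\ol\bO-\bO}$, so it induces a functor $\IndCoh_\bO\to\IndCoh_Y/\IndCoh_{Y'}$. I expect this to be an equivalence, by the usual excision for singular support, since $\bO$ is locally closed in $Y$ and $\IndCoh_Y$ is generated by $\IndCoh_{Y'}$ and $\IndCoh_{\ol\bO}$. This follows from the local-cohomology description of singular support in \cite{AG1}, which gives $\IndCoh_{Y}=\IndCoh_{Y'}+\IndCoh_{\ol\bO}$ with $\IndCoh_{Y'}\cap\IndCoh_{\ol\bO}=\IndCoh_{\ol\bO-\bO}$. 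Here $\IndCoh_{Z}$ denotes the subcategory with singular support in $Z$, and this statement is the main technical point. Since the trace is additive for such sequences, we get a fiber sequence
$$\Tr(\Frob,\IndCoh_{Y'})\to \Tr(\Frob,\IndCoh_{Y})\to \Tr(\Frob,\IndCoh_\bO).$$
By induction the first term is in degree $0$, and by \conjref{c:gr} so is the third. Hence the middle term is in degree $0$, and the sequence is a short exact sequence of vector spaces. In particular $\Tr(\Frob,\IndCoh_{Y'})\to\Tr(\Frob,\IndCoh_Y)$ is injective. The base case $Y=\emptyset$ is trivial; alternatively one can start from $Y=\{0\}$ using \thmref{t:exc}.

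For injectivity of \eqref{e:embed Y Tr}, build a chain $Y=Y_0\subset Y_1\subset\dots\subset Y_n=\Nilp$ by adding one orbit at a time. Each $Y_{k}$ is obtained from $Y_{k+1}$ by removing an orbit open in $Y_{k+1}$; for example, add orbits in order of increasing dimension among those not in $Y$, which keeps each $Y_k$ closed. By the previous paragraph, each map $\Tr(\Frob,\IndCoh_{Y_k})\to\Tr(\Frob,\IndCoh_{Y_{k+1}})$ is an injection of classical vector spaces. Composing gives the injectivity into $\Tr(\Frob,\IndCoh_\Nilp)$. Finally, by functoriality of traces under compact-preserving embeddings, the composite is the map \eqref{e:embed Y Tr}.

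The main obstacle is the identification of the quotient $\IndCoh_Y/\IndCoh_{Y'}$ with $\IndCoh_\bO$. If needed, I would reduce it to checking on the compact generators of \cite[Sect. 11.1.6]{AG1}, namely the coherent objects with prescribed support, using the Koszul-dual/local cohomology functors for conical closed subsets.
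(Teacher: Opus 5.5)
Your argument is correct and is essentially the paper's proof: induction over the orbit stratification, additivity of $\Tr(\Frob,-)$ for the localization sequence, the identification of the subquotient via the Arinkin--Gaitsgory localization of $\IndCoh$ over $\BP(\Sing)$, and \conjref{c:gr} to get degree-$0$ outer terms. The only difference is organizational. The paper removes all top-dimensional orbits at once, using $\IndCoh_Y/\IndCoh_{Y_0}\simeq\prod_i\IndCoh_{\bO_i}$, and then obtains injectivity for arbitrary $Y'\subseteq Y$ by a direct-summand comparison of cofibers. You instead remove one orbit at a time and get injectivity directly from a chain of closed subsets up to $\Nilp$, which is slightly cleaner.
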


\medskip

In other words, \corref{c:filtr} says that the assignment
$$Y\rightsquigarrow \Tr(\Frob,\IndCoh_Y(\LS_\cG^{\on{restr}}))$$
defines a filtration on (the classical vector space) $\Tr(\Frob,\IndCoh_\Nilp(\LS_\cG^{\on{restr}}))$,
indexed by the poset of $\on{Ad}$-invariant closed subsets of $\Nilp$. 

\medskip

As particular case of \corref{c:filtr}, we obtain:

\begin{cor} \label{c:filtr part} \hfill

\smallskip

\noindent{\em(a)} The object 
$$\Gamma(\LS^{\on{arithm}}_\cG,\Psi_{\LS^{\on{arithm}}_\cG}(\omega_{\LS^{\on{arithm}}_\cG}))\in \Vect_{\ol\BQ_\ell}$$
is concentrated in cohomological degree $0$;

\smallskip

\noindent{\em(b)} The map \eqref{e:O to omega} is injective.

\end{cor}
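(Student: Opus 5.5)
The statement to prove is \corref{c:filtr part}, and I would deduce it directly from \corref{c:filtr}, applied to the two extreme choices $Y=\Nilp$ and $Y=\{0\}$. Nothing new needs to be proved beyond identifying the relevant traces and maps. All of this is under the standing assumption of \conjref{c:gr}, on which \corref{c:filtr} itself rests.

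For (a), take $Y=\Nilp$ in \corref{c:filtr}. Then the object \eqref{e:Y Tr} is $\Tr(\Frob,\IndCoh_\Nilp(\LS_\cG^{\on{restr}}))$, and \corref{c:filtr} says it is concentrated in cohomological degree $0$. The chain of isomorphisms in \eqref{e:embed Y Tr} identifies this trace with
$$\Gamma^{\IndCoh}(\LS^{\on{arithm}}_\cG,\omega_{\LS^{\on{arithm}}_\cG})\simeq \Gamma(\LS^{\on{arithm}}_\cG,\Psi_{\LS^{\on{arithm}}_\cG}(\omega_{\LS^{\on{arithm}}_\cG})).$$
The first isomorphism is the one of \cite[Sect. 6.4.13]{BLR} combined with \cite[Sect. 24.6]{AGKRRV1}, and the second is the compatibility of $\Gamma^{\IndCoh}$ with $\Psi$ for (formal) algebraic stacks. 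Hence (a) holds.

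For (b), take $Y=\{0\}$. The first step is the identification $\IndCoh_{\{0\}}(\LS_\cG^{\on{restr}})=\QCoh(\LS_\cG^{\on{restr}})$, embedded via $\Xi_{\LS_\cG^{\on{restr}}}$. Here I use that $\Xi$ differs from $\Upsilon$ only by tensoring with a line bundle, and that the essential image of $\Upsilon$ is exactly the objects with zero singular support. Since this line-bundle twist intertwines the Frobenius structures up to the natural transformation of \eqref{e:Xi diag}, the induced map on traces \eqref{e:embed Y Tr} for $Y=\{0\}$ is precisely the map \eqref{e:Xi Tr}. The enhanced-trace remark preceding \conjref{c:gr} records this explicitly: for $Y_1=\{0\}$ and $Y_2=\Nilp$, the map \eqref{e:Y12 Tr upgrade} is \eqref{e:Xi Tr bis refined}. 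Taking global sections, it becomes \eqref{e:O to omega}. Injectivity of \eqref{e:embed Y Tr} for $Y=\{0\}$, as provided by \corref{c:filtr}, then gives (b). Consistently with this, the source $\Gamma(\LS^{\on{arithm}}_\cG,\CO)$ is classical by \thmref{t:exc}.

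The only point requiring care, and the one I expect to be the main obstacle, is checking that the map \eqref{e:O to omega}, which the paper defined abstractly through \eqref{e:Xi diag}, coincides with the map on traces induced by the inclusion $\IndCoh_{\{0\}}\subset\IndCoh_\Nilp$. In particular, the natural transformations witnessing Frobenius-compatibility must match. I would settle this by noting that both maps are obtained by functoriality of traces from the same functor $\Xi$, regarded as a functor into $\IndCoh_\Nilp$, with the same lax commutation datum. The identification of $\Tr(\Frob,\QCoh)$ with $\Gamma(\CO)$ is then the same on both sides.
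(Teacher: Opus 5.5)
Your proposal is correct and is the paper's argument: \corref{c:filtr part} is stated there as the special case of \corref{c:filtr} with $Y=\Nilp$ for (a) and $Y=\{0\}$ for (b), with the $Y=\{0\}$ trace map identified with \eqref{e:O to omega} by construction via $\Xi$ (as in the enhanced-trace remark). Your extra check that the Frobenius-compatibility data match just spells out that ``by construction'' step; it is not a different route.
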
 

\sssec{}

We will also prove:

\begin{cor} \label{c:filtr bis}
For a pair of closed subsets $Y_1$, $Y_2$, we have
$$\Tr(\Frob,\IndCoh_{Y_1}(\LS_\cG^{\on{restr}}))\cap \Tr(\Frob,\IndCoh_{Y_2}(\LS_\cG^{\on{restr}}))=
\Tr(\Frob,\IndCoh_{Y_1\cap Y_2}(\LS_\cG^{\on{restr}}))$$
and
$$\Tr(\Frob,\IndCoh_{Y_1}(\LS_\cG^{\on{restr}}))+ \Tr(\Frob,\IndCoh_{Y_2}(\LS_\cG^{\on{restr}}))=
\Tr(\Frob,\IndCoh_{Y_1\cup Y_2}(\LS_\cG^{\on{restr}}))$$
as subspaces of $\Tr(\Frob,\IndCoh_\Nilp(\LS_\cG^{\on{restr}}))$.
\end{cor}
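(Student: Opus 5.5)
We deduce \corref{c:filtr bis} from the exactness mechanism behind \corref{c:filtr}. Throughout we abbreviate $\bT_Y:=\Tr(\Frob,\IndCoh_Y(\LS_\cG^{\on{restr}}))$. By \corref{c:filtr}, each $\bT_Y$ is a classical vector space, and all the maps $\bT_{Y'}\to \bT_{Y}$ for $Y'\subset Y$ are injective. So both claimed equalities are statements about subspaces of $\bT_\Nilp$.

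The key input is a short exact sequence of traces attached to the pair $(Y_1,Y_2)$. Consider the commutative square of full embeddings
$$\IndCoh_{Y_1\cap Y_2}\to \IndCoh_{Y_1},\ \IndCoh_{Y_2}\to \IndCoh_{Y_1\cup Y_2}.$$
We expect this square to be a "Mayer--Vietoris" square of compactly generated categories. That is, we want
$$\IndCoh_{Y_1}/\IndCoh_{Y_1\cap Y_2}\to \IndCoh_{Y_1\cup Y_2}/\IndCoh_{Y_2}$$
to be an equivalence, compatibly with $\Frob$. We would prove this via the local cohomology functors with respect to closed conical subsets of $\Sing(\LS_\cG^{\on{restr}})$, following \cite{AG1}. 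These functors are defined by the action of $\QCoh$ of the relevant conical subsets: the right adjoint to the embedding $\IndCoh_{Y}\hookrightarrow \IndCoh_\Nilp$ is local cohomology $\Gamma_Y$. The standard identities are $\Gamma_{Y_1}\Gamma_{Y_2}=\Gamma_{Y_1\cap Y_2}$, together with a fiber sequence relating $\Gamma_{Y_1\cup Y_2}$, $\Gamma_{Y_1}\oplus\Gamma_{Y_2}$ and $\Gamma_{Y_1\cap Y_2}$. These show that the essential image of the functor above contains all of $\IndCoh_{Y_1\cup Y_2}$ modulo $\IndCoh_{Y_2}$, and that it is fully faithful. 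All embeddings preserve compact objects by \cite[Sect. 11.1.6]{AG1}, so traces are functorial along them. Since trace is additive along Verdier-type localization sequences of compactly generated categories with compact-preserving embeddings, we then get a cofiber sequence
$$\bT_{Y_1\cap Y_2}\to \bT_{Y_1}\oplus \bT_{Y_2}\to \bT_{Y_1\cup Y_2}.$$

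Now all terms are in degree $0$ by \corref{c:filtr}. The long exact sequence therefore degenerates to a short exact sequence of vector spaces:
$$0\to \bT_{Y_1\cap Y_2}\to \bT_{Y_1}\oplus\bT_{Y_2}\to \bT_{Y_1\cup Y_2}\to 0,$$
with maps $x\mapsto (x,-x)$ and $(x,y)\mapsto x+y$. Here we use compatibility of the square's maps with the maps to $\bT_\Nilp$, so everything may be computed inside $\bT_\Nilp$.

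The two equalities then follow. Surjectivity of the second map gives $\bT_{Y_1}+\bT_{Y_2}=\bT_{Y_1\cup Y_2}$. Exactness in the middle, combined with injectivity of $\bT_{Y_1\cup Y_2}\to\bT_\Nilp$, says that $(x,y)$ with $x+y=0$ in $\bT_\Nilp$ comes from $\bT_{Y_1\cap Y_2}$. Hence $\bT_{Y_1}\cap\bT_{Y_2}=\bT_{Y_1\cap Y_2}$.

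The main obstacle is the Mayer--Vietoris equivalence of quotient categories, i.e.\ the behavior of singular-support subcategories under union and intersection. If this is awkward to verify directly, we would instead filter by orbits. We would induct on the number of orbits in $Y_1\cup Y_2$, using the graded pieces $\IndCoh_\bO$ and the degree-$0$ statement of \conjref{c:gr}. In that approach, each orbit $\bO\subset Y_1\cup Y_2$ contributes $\Tr(\Frob,\IndCoh_\bO)$ exactly once to $\bT_{Y_1\cup Y_2}$, and the orbits of $Y_1\cap Y_2$ are counted twice in the sum $\bT_{Y_1}\oplus\bT_{Y_2}$. The dimension count, made filtration-wise, then gives the same exact sequence.
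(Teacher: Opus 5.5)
Your proposal is correct and follows essentially the paper's argument. The paper also uses the fiber sequence of traces for a localization to reduce to the equivalence $\IndCoh_{Y_2}/\IndCoh_{Y_1\cap Y_2}\simeq \IndCoh_{Y_1\cup Y_2}/\IndCoh_{Y_1}$ (the mirror image of yours), but it justifies this by the localization of $\IndCoh$ over $\BP(\Sing)$ from \cite{AG2} rather than by local cohomology functors; your explicit passage from the resulting Mayer--Vietoris sequence to the two equalities, using \corref{c:filtr}, spells out what the paper leaves implicit.
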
 

\ssec{Proof of Corollaries \ref{c:filtr} and \ref{c:filtr bis}}

\sssec{}

To prove \corref{c:filtr}, we will argue by induction. Assume that for all $Y$ with\footnote{Recall that nilpotent orbits have even dimension.}  
$\dim(Y)\leq 2n$ the following two statements hold:

\begin{enumerate}

\item $\Tr(\Frob,\IndCoh_Y(\LS_\cG^{\on{restr}}))$ is concentrated in cohomological degree $0$;

\item For all $Y'\subseteq Y$, the map 
$$\Tr(\Frob,\IndCoh_{Y'}(\LS_\cG^{\on{restr}}))\to \Tr(\Frob,\IndCoh_Y(\LS_\cG^{\on{restr}}))$$
is injective.

\end{enumerate} 

\sssec{}

The base of the induction is when $n=0$, in which case the assertion coincides with \thmref{t:exc}. Let us
perform the induction step. I.e., let $Y$ be of dimension $2(n+1)$. Let $Y_0\subset Y$ be the union of
orbits of dimensions $\leq 2n$. 

\medskip

We claim that 
$$\on{coFib}\left(\Tr(\Frob,\IndCoh_{Y_0}(\LS_\cG^{\on{restr}}))\to \Tr(\Frob,\IndCoh_Y(\LS_\cG^{\on{restr}}))\right)$$
is concentrated in cohomological degree $0$. 

\medskip

Indeed, let $\bC_0\hookrightarrow \bC$ be a fully faithful embedding of dualizable categories
that admits a continuous right adjoint. Let $F$ be an endofunctor of $\bC$ that preserves $\bC'$. 
In this case, we have a fiber sequence in $\Vect_{\ol\BQ_\ell}$:
\begin{equation} \label{e:fiber traces} 
\Tr(F|_{\bC_0},\bC')\to \Tr(F,\bC)\to \Tr(F|_{\bC/\bC_0},\bC/\bC_0).
\end{equation} 

\medskip

Hence, it is enough to show that
$$\Tr(\Frob,\IndCoh_Y(\LS_\cG^{\on{restr}})/\IndCoh_{Y_0}(\LS_\cG^{\on{restr}}))$$ is
concentrated in cohomological degree $0$. 

\sssec{}

Write 
$$\CY-\CY_0=\underset{i}\sqcup\, \bO_i,$$
where $\bO_i$ are nilpotent orbits (of dimension $2(n+1)$). We claim that
$$\IndCoh_Y(\LS_\cG^{\on{restr}})/\IndCoh_{Y'}(\LS_\cG^{\on{restr}})\simeq
\underset{i\in I}\Pi\, \IndCoh_{\bO_i}(\LS_\cG^{\on{restr}}).$$

Indeed, this follows from the localization of $\IndCoh(\CZ)/\QCoh(\CZ)$ onto $\BP(\Sing(\CZ))_\dr$, see \cite[Theorem 1.4.2]{AG2}. 

\medskip

Hence, the required assertion follows from \conjref{c:gr}.

\sssec{}

Since, by the inductive hypothesis, $\Tr(\Frob,\IndCoh_{Y_0}(\LS_\cG^{\on{restr}}))$ is concentrated in cohomological
degree $0$, we obtain that so is $\Tr(\Frob,\IndCoh_Y(\LS_\cG^{\on{restr}}))$; moreover, the map 
$$\Tr(\Frob,\IndCoh_{Y_0}(\LS_\cG^{\on{restr}}))\to \Tr(\Frob,\IndCoh_Y(\LS_\cG^{\on{restr}})$$
is injective.

\sssec{}

Let now $Y'\subset Y$ be an arbitrary closed $\on{Ad}$-invariant subset. We already know that 
$\Tr(\Frob,\IndCoh_{Y'}(\LS_\cG^{\on{restr}}))$ is concentrated in cohomological
degree $0$, and it remains to show that the map
$$\Tr(\Frob,\IndCoh_{Y'}(\LS_\cG^{\on{restr}}))\to \Tr(\Frob,\IndCoh_Y(\LS_\cG^{\on{restr}}))$$
is injective. 

\medskip

Set $Y'_0=Y'\cap Y_0$. By what we have proved above, the map
$$\Tr(\Frob,\IndCoh_{Y'_0}(\LS_\cG^{\on{restr}}))\to \Tr(\Frob,\IndCoh_{Y'}(\LS_\cG^{\on{restr}}))$$
is injective.

\medskip

By the inductive hypothesis, the map
$$\Tr(\Frob,\IndCoh_{Y'_0}(\LS_\cG^{\on{restr}}))\to \Tr(\Frob,\IndCoh_{Y_0}(\LS_\cG^{\on{restr}}))$$
is injective. Hence, the map
$$\Tr(\Frob,\IndCoh_{Y'_0}(\LS_\cG^{\on{restr}}))\to \Tr(\Frob,\IndCoh_Y(\LS_\cG^{\on{restr}}))$$
is injective. 

\medskip

Hence, it suffices to show that the map
\begin{multline*} 
\on{coFib}\left(\Tr(\Frob,\IndCoh_{Y'_0}(\LS_\cG^{\on{restr}}))\to \Tr(\Frob,\IndCoh_{Y'}(\LS_\cG^{\on{restr}}))\right)\to \\
\to \on{coFib}\left(\Tr(\Frob,\IndCoh_{Y'_0}(\LS_\cG^{\on{restr}}))\to \Tr(\Frob,\IndCoh_{Y}(\LS_\cG^{\on{restr}}))\right)
\end{multline*}
is injective. 

\medskip

For that, it suffices to show that the map
\begin{multline} \label{e:fewer} 
\on{coFib}\left(\Tr(\Frob,\IndCoh_{Y'_0}(\LS_\cG^{\on{restr}}))\to \Tr(\Frob,\IndCoh_{Y'}(\LS_\cG^{\on{restr}}))\right)\to \\
\to \on{coFib}\left(\Tr(\Frob,\IndCoh_{Y_0}(\LS_\cG^{\on{restr}}))\to \Tr(\Frob,\IndCoh_{Y}(\LS_\cG^{\on{restr}}))\right)
\end{multline}
is injective. 

\sssec{}

By the above, we identify
$$\on{coFib}\left(\Tr(\Frob,\IndCoh_{Y_0}(\LS_\cG^{\on{restr}}))\to \Tr(\Frob,\IndCoh_{Y}(\LS_\cG^{\on{restr}}))\right)
\simeq \underset{i\in I}\Pi\, \Tr(\Frob,\IndCoh_{\bO_i}(\LS_\cG^{\on{restr}})).$$

Similarly, 
\begin{multline*} \
\on{coFib}\left(\Tr(\Frob,\IndCoh_{Y'_0}(\LS_\cG^{\on{restr}}))\to \Tr(\Frob,\IndCoh_{Y'}(\LS_\cG^{\on{restr}}))\right)\simeq \\
\simeq \underset{i\in I'}\Pi\, \Tr(\Frob,\IndCoh_{\bO_i}(\LS_\cG^{\on{restr}})),
\end{multline*}
where $I'$ is a subset of $I$.

\medskip

Hence, \eqref{e:fewer} is an inclusion of a direct summand.

\qed[\corref{c:filtr}]

\sssec{}

We now prove \corref{c:filtr bis}. Consider the diagram of categories
$$
\CD
\IndCoh_{Y_1}(\LS_\cG^{\on{restr}}) @>>> \IndCoh_{Y_1\cup Y_2}(\LS_\cG^{\on{restr}}) @>>> \IndCoh_{Y_1\cup Y_2}(\LS_\cG^{\on{restr}})/\IndCoh_{Y_1}(\LS_\cG^{\on{restr}}) \\
@AAA @AAA @AAA \\
\IndCoh_{Y_1\cap Y_2}(\LS_\cG^{\on{restr}}) @>>> \IndCoh_{Y_2}(\LS_\cG^{\on{restr}}) @>>> \IndCoh_{Y_2}(\LS_\cG^{\on{restr}})/\IndCoh_{Y_1\cap Y_2}(\LS_\cG^{\on{restr}}),
\endCD
$$
where the left horizontal functors admit continuous right adjoints. 

\medskip

By \eqref{e:fiber traces}, it suffices to show that the right vertical arrow induces an isomorphism after taking $\Tr(\Frob,-)$. 
We claim, however, that the corresponding functor is an equivalence. Indeed, this follows from the localization of
$\IndCoh(-)$ on $\BP(\Sing(-))$ in \cite{AG2}.

\qed[\corref{c:filtr bis}]

\ssec{The filtration on the automorphic side}

\sssec{}

The discussion in \secref{ss:filtr spec} is equally applicable to $\LS^{\on{restr}}_\cG$ replaced by $'\!\LS^{\on{restr}}_\cG$.

\medskip

Let us temporarily assume \conjref{c:gr}. Then the filtration on 
$$\Tr(\Frob,\IndCoh_\Nilp({}'\!\LS_\cG^{\on{restr}}))\simeq \Gamma^{\IndCoh}({}'\!\LS^{\on{arithm}}_\cG,\omega_{'\!\LS^{\on{arithm}}_\cG})$$
given by \corref{c:filtr} induces a filtration on 
$$\sFunct_c(\Bun_G(\BF_q),\ol\BQ_\ell) \overset{\sL_G}\simeq \Gamma^{\IndCoh}({}'\!\LS^{\on{arithm}}_\cG,\omega_{'\!\LS^{\on{arithm}}_\cG}).$$

Denote this filtration by
\begin{equation} \label{e:autom filtr}
Y\rightsquigarrow \sFunct_c(\Bun_G(\BF_q),\ol\BQ_\ell)_Y.
\end{equation} 

Moreover, by \corref{c:filtr bis}, we have
$$\sFunct_c(\Bun_G(\BF_q),\ol\BQ_\ell)_{Y_1}\cap \sFunct_c(\Bun_G(\BF_q),\ol\BQ_\ell)_{Y_2}=
\sFunct_c(\Bun_G(\BF_q),\ol\BQ_\ell)_{Y_1\cap Y_2}$$
and
$$\sFunct_c(\Bun_G(\BF_q),\ol\BQ_\ell)_{Y_1}+ \sFunct_c(\Bun_G(\BF_q),\ol\BQ_\ell)_{Y_2}=
\sFunct_c(\Bun_G(\BF_q),\ol\BQ_\ell)_{Y_1\cup Y_2}.$$

\sssec{}

For a given nilpotent orbit $\bO$, denote
$$\sFunct_c(\Bun_G(\BF_q),\ol\BQ_\ell)_\bO:=\sFunct_c(\Bun_G(\BF_q),\ol\BQ_\ell)_{\ol\bO}/\sFunct_c(\Bun_G(\BF_q),\ol\BQ_\ell)_{\ol\bO-\bO}.$$

Note that by construction, the isomorphism $\sL_G$ identifies
$$\sFunct_c(\Bun_G(\BF_q),\ol\BQ_\ell)_\bO\simeq \Tr(\Frob,\IndCoh_\bO({}'\!\LS_\cG^{\on{restr}})).$$

\begin{rem} \label{r:fake}

We will refer to 
$$\on{gr}(\sFunct_c(\Bun_G(\BF_q),\ol\BQ_\ell)):=\underset{\bO}\oplus\, \sFunct_c(\Bun_G(\BF_q),\ol\BQ_\ell)_\bO$$
as the space of \emph{fake} automorphic functions. 

\end{rem} 

\sssec{}

In the special case of $Y=\{0\}$, denote the corresponding term of the filtration by
$$\sFunct_c(\Bun_G(\BF_q),\ol\BQ_\ell)_{\{0\}}=:\sFunct_c(\Bun_G(\BF_q),\ol\BQ_\ell)_{\on{temp}}.$$

Note that by definition, $\sFunct_c(\Bun_G(\BF_q),\ol\BQ_\ell)_{\on{temp}}$ is isomorphic under $\sL_G$ to the image of the map
\begin{equation} \label{e:temp to all}
\Gamma({}'\!\LS^{\on{arithm}}_\cG,\CO_{'\!\LS^{\on{arithm}}_\cG})\overset{\text{\eqref{e:O to omega}}}\longrightarrow
%\Gamma({}'\!\LS^{\on{arithm}}_\cG,\Psi_{'\!\LS^{\on{arithm}}_\cG}(\omega_{'\!\LS^{\on{arithm}}_\cG}))\simeq
\Gamma^{\IndCoh}({}'\!\LS^{\on{arithm}}_\cG,\omega_{'\!\LS^{\on{arithm}}_\cG}).
\end{equation}

\sssec{}

Note that the latter gives an unconditional definition of the subspace 
$$\sFunct_c(\Bun_G(\BF_q),\ol\BQ_\ell)_{\on{temp}} \subset \sFunct_c(\Bun_G(\BF_q),\ol\BQ_\ell).$$

Indeed, both sides in \eqref{e:temp to all} are concentrated in cohomological degree $0$: 

\begin{itemize}

\item For the right-hand side, this is true thanks to the existence of the isomorphism $\sL_G$;

\item For the left-hand side, this is true thanks to \thmref{t:exc}.

\end{itemize} 

\begin{rem}

What \conjref{c:gr} ``buys" us is the knowledge that the map \eqref{e:temp to all} is injective, see \corref{c:filtr part}(a).

\end{rem}

\ssec{The non-degenerate subspace}

\sssec{} \label{sss:act of exc}

Denote 
$$\on{Exc}(X,\cG):=\Gamma(\LS_\cG^{\on{arithm}},\CO_{\LS_\cG^{\on{arithm}}}).$$

We call it the \emph{excursion algebra}. Note that according to \thmref{t:exc}, it is \emph{classical}.

\medskip

The algebra $\Gamma(\LS_\cG^{\on{arithm}},\CO_{\LS_\cG^{\on{arithm}}})$ acts on
$$\Gamma^{\IndCoh}({}'\!\LS_\cG^{\on{arithm}},\omega_{'\!\LS_\cG^{\on{arithm}}}).$$

Transferring this action using the identification $\sL_G$, we obtain an action of $\on{Exc}(X,\cG)$ on the
space $\sFunct_c(\Bun_G(\BF_q),\ol\BQ_\ell)$ of automorphic functons. 

\begin{rem}
The above action is a version of the main construction in \cite[Sect. 11]{VLaf} (on cuspidal functions), 
extended in \cite{Xue} to all automorphic functions.
\end{rem} 

\sssec{}

Recall the element 
$$\poinc^{\on{Vac}}_!\in \sFunct_c(\Bun_G(\BF_q),\ol\BQ_\ell),$$
see \secref{sss:vac poinc funct}.

\medskip

Let
\begin{equation} \label{e:non-degen}
\sFunct_c(\Bun_G(\BF_q),\ol\BQ_\ell)_{\on{non-degen}}\subset \sFunct_c(\Bun_G(\BF_q),\ol\BQ_\ell)
\end{equation}
be the subspace generated by $\poinc^{\on{Vac}}_!$ under the action of $\on{Exc}(X,\cG)$. 

\sssec{}

We claim:

\begin{cor} \label{c:temp=nondeg}
The subspaces
$$\sFunct_c(\Bun_G(\BF_q),\ol\BQ_\ell)_{\on{non-degen}} \subset \sFunct_c(\Bun_G(\BF_q),\ol\BQ_\ell)
\supset \sFunct_c(\Bun_G(\BF_q),\ol\BQ_\ell)_{\on{temp}}$$
coincide.
\end{cor}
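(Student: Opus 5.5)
Transport both subspaces to the spectral side via $\sL_G$ and compare them there. By definition, $\sL_G$ carries $\sFunct_c(\Bun_G(\BF_q),\ol\BQ_\ell)_{\on{temp}}$ onto the image of the map \eqref{e:temp to all},
$$\Gamma({}'\!\LS^{\on{arithm}}_\cG,\CO_{'\!\LS^{\on{arithm}}_\cG})\to \Gamma^{\IndCoh}({}'\!\LS^{\on{arithm}}_\cG,\omega_{'\!\LS^{\on{arithm}}_\cG}).$$
Since the action of $\on{Exc}(X,\cG)$ on automorphic functions is defined by transport of structure along $\sL_G$, the subspace $\sFunct_c(\Bun_G(\BF_q),\ol\BQ_\ell)_{\on{non-degen}}$ is carried onto the $\on{Exc}(X,\cG)$-submodule generated by $\sL_G(\poinc^{\on{Vac}}_!)$. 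By \thmref{t:main}, $\sL_G(\poinc^{\on{Vac}}_!)$ is the image of $1$ under \eqref{e:O to omega}. So it suffices to show that the $\on{Exc}(X,\cG)$-submodule generated by the image of $1$ equals the image of the whole of $\Gamma({}'\!\LS^{\on{arithm}}_\cG,\CO)$.

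The key input is that \eqref{e:O to omega} is $\on{Exc}(X,\cG)$-linear. I will verify this as follows. The map \eqref{e:O to omega} is induced on traces by $\Xi$, via the diagram \eqref{e:Xi diag}. The functor $\Xi=(\CL\otimes-)\circ\Upsilon$ is $\QCoh(\LS^{\on{restr}}_\cG)$-linear, and the 2-cell in \eqref{e:Xi diag} is compatible with this linearity. The actions of $\Tr(\Frob,\QCoh(\LS^{\on{restr}}_\cG))=\on{Exc}(X,\cG)$ on both traces come from these module structures. Hence the induced map on traces is a map of $\on{Exc}(X,\cG)$-modules.

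A cleaner way to see the same thing is via the remark realizing \eqref{e:O to omega} as $\Gamma$ of a map of sheaves $\CO\to\Psi(\omega)$ on ${}'\!\LS^{\on{arithm}}_\cG$, i.e., through the enhanced trace. Any map of quasi-coherent sheaves induces a $\Gamma(\CO)$-linear map on global sections. It also uses that $\Gamma(\LS^{\on{arithm}}_\cG,\CO)\to\Gamma({}'\!\LS^{\on{arithm}}_\cG,\CO)$ is a surjective ring map, which holds because ${}'\!\LS^{\on{arithm}}_\cG$ is a union of connected components. This linearity check is the only point needing care; I expect it to be routine bookkeeping with the enhanced trace formalism of \cite[Sect. 3.8]{GKRV}.

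Given linearity, for $f\in\on{Exc}(X,\cG)$ the image of $f|_{'\!\LS^{\on{arithm}}_\cG}$ equals $f$ acting on the image of $1$. Surjectivity of restriction to the union of components then shows that the image of the whole of $\Gamma({}'\!\LS^{\on{arithm}}_\cG,\CO)$ is exactly $\on{Exc}(X,\cG)\cdot(\text{image of }1)$. Transporting back via $\sL_G$ gives the equality of the two subspaces. Note that neither \conjref{c:gr} nor injectivity of \eqref{e:O to omega} is needed, consistent with the unconditional definition of the tempered subspace.
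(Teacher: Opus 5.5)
Your proposal is correct and follows the paper's proof. You transport both subspaces along $\sL_G$, use \thmref{t:main} to identify $\sL_G(\poinc^{\on{Vac}}_!)$ with the image of $1$, and observe that the $\on{Exc}(X,\cG)$-submodule it generates is the image of \eqref{e:temp to all}. The paper leaves implicit two points that you spell out: the $\on{Exc}(X,\cG)$-linearity of \eqref{e:O to omega}, and the surjectivity of restricting functions to the union of components ${}'\!\LS^{\on{arithm}}_\cG$.
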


\begin{proof}

By \thmref{t:main}, the subspace \eqref{e:non-degen} corresponds under $\sL_G$ to the subspace of
$$\Gamma^{\IndCoh}({}'\!\LS^{\on{arithm}}_\cG,\omega_{'\!\LS^{\on{arithm}}_\cG})$$
generated under the action of the excursion algebra 
by the image of the element 
$$1\in \Gamma({}'\!\LS_\cG^{\on{arithm}},\CO_{'\!\LS_\cG^{\on{arithm}}})$$
under \eqref{e:O to omega}. 

\medskip

However, the latter space is the same as the image of the map \eqref{e:temp to all}. 

\end{proof} 

\ssec{The support of non-degenerate cuspidal functions}

In this subsection we assume that $G$ is semi-simple. 

\sssec{}

The following strengthening of \thmref{t:exc} has also been obtained in \cite[Corollary 3.7.3]{GaLiRe}: 

\begin{thm} \label{t:Exc red}
The excursion algebra 
$$\on{Exc}(X,\cG)=\Gamma(\LS_\cG^{\on{arithm}},\CO_{\LS_\cG^{\on{arithm}}})$$
is nilpotent-free.
\end{thm}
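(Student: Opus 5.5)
We need to show that $\on{Exc}(X,\cG)$ is reduced when $G$ is semi-simple. By \thmref{t:exc}, $\Gamma(\LS_\cG^{\on{arithm}},\CO_{\LS_\cG^{\on{arithm}}})$ is classical, so it is an honest commutative $\ol\BQ_\ell$-algebra. Since $\LS_\cG^{\on{arithm}}$ is quasi-compact and is a quotient of an affine derived scheme by a reductive group, it equals $\Gamma(\CO)^{\cG}$ for an affine presentation $\LS_\cG^{\on{arithm}}\simeq \Spec(R)/\cG$. Hence $\on{Exc}(X,\cG)\simeq (R^{\cG})$ computed derivedly, and its $H^0$ is $H^0(R)^{\cG}$. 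Nilpotents in $H^0(R)^{\cG}$ come from nilpotents of $H^0(R)$. So the plan is to bound the nilradical of $H^0(R)$ after taking invariants.

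\emph{Step 1: reduce to a statement about closed points and tangent data.} A nilpotent invariant function $f$ vanishes at every closed point of the coarse quotient $\Spec(H^0(R)^{\cG})$, i.e.\ on every closed $\cG$-orbit, which corresponds to a semi-simple Weil local system $\sigma$. It suffices to show that the completed local ring of $\on{Exc}(X,\cG)$ at each such point is reduced. By Luna étale slice (applied to the classical truncation, plus the derived enhancement), the formal neighbourhood of the coarse point is $(\text{formal nbhd of }\sigma\text{ in the slice})/\!/\on{Aut}(\sigma)$. Here $\on{Aut}(\sigma)$ is reductive, and for $G$ semi-simple and $\sigma$ semisimple it is the centralizer of the image. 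The formal deformation space is governed by $H^*(\text{Weil},\cg_\sigma)$, with $H^0=\on{Lie}\on{Aut}(\sigma)$, $H^1$ giving the tangent space and $H^2$ the obstructions.

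\emph{Step 2: use weights.} By Deligne/Lafforgue purity, after twisting, $\sigma$ is mixed, and the geometric cohomology $H^i(X,\cg_\sigma)$ carries Frobenius weights. The arithmetic $H^1$ and $H^2$ are built from Frobenius-(co)invariants of geometric $H^0,H^1,H^2$. The key claim is that the derived local model is $\on{Spec}\Sym(V^*)$ modulo a quadratic obstruction map. After passing to $\on{Aut}(\sigma)$-invariants, the invariants of any non-reduced structure vanish. Concretely, I would show that the formal completion of $\LS_\cG^{\on{arithm}}$ at $\sigma$ is the derived fiber of Frobenius-fixed points on the formal completion of $\LS_\cG^{\on{restr}}$. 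The latter is quasi-smooth with a $\BG_m$-grading by weight (via a Frobenius-semisimplification/weight argument). The fixed-point locus then becomes a linear graded model $(V\oplus W[-1])/H$ in which $V$ is the weight-zero part.

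\emph{Step 3: invariant theory.} In the linear model, the invariant functions are $\Sym(V^*)^{H}$ tensored with an exterior algebra on $W^*$, whose invariants in degree $0$ are polynomial, hence reduced. Then pass from formal completions back to the global algebra by faithful flatness of completion at maximal ideals. Semi-simplicity of $G$ is used to ensure that the center of $\cG$ is finite, so that $\LS^{\on{arithm}}_\cG$ has finitely many components with finite stabilizers at generic points and no extra $\BG_a$-directions from $H^1(\text{Weil},\fz)$. That would otherwise introduce nilpotent-free but non-finite behaviour; more importantly, it ensures the derived $H^2$ contributions cancel.

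The main obstacle is Step 2: producing the graded linearization (a formality statement for the deformation dg Lie algebra at $\sigma$, with Frobenius acting semisimply with weights), uniformly for all semisimple $\sigma$. I would first try Deligne's weight argument for formality, in the spirit of Kapranov/Goldman–Millson, then check compatibility of the resulting model with the Frobenius fixed-point construction from \thmref{t:present}(b).
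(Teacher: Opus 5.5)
The paper gives no proof of this statement; it quotes it from \cite[Corollary 3.7.3]{GaLiRe}. Your argument therefore has to stand on its own, and it does not. Step~1 is fine: via Luna's slice theorem you reduce to the $\Aut(\sigma)$-invariants of the completed classical slice at a semi-simple Weil local system $\sigma$. Steps~2--3, however, assume what has to be proved.

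\textbf{The local model is not linear.} $\sigma$ is semi-simple, but its Weil structure need not be pure. When it is not, $\Frob$-fixed extension classes in (geometric) $H^1(X,\cg_\sigma)$ can exist and can have nonzero brackets.

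For example, take $\cG=SL_3$, $X$ of genus $\geq 1$, and $\lambda$ an eigenvalue of $\Frob$ on $H^1(X,\ol\BQ_\ell)$. Let $\sigma$ be the sum of three unramified characters with Frobenius $z=\on{diag}(a_1,a_2,a_3)$, where $a_2/a_1=\lambda$ and $a_3/a_2=q/\lambda$. Then $H^1(X,\ol\BQ_\ell)_\lambda\otimes E_{12}$ and $H^1(X,\ol\BQ_\ell)_{q/\lambda}\otimes E_{23}$ are $\Frob$-fixed. By Poincar\'e duality, their bracket maps nontrivially onto the $\Frob$-fixed line $H^2(X,\ol\BQ_\ell)\otimes E_{13}$.

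So the classical slice is a completed $\Sym(V^*)$ modulo nonzero equations, not a completed $\Sym(V^*)$. Your Step~3 computes invariants of a model with zero differential, which discards exactly the potential source of nilpotents. A Deligne-type formality statement would not rescue this, since it only removes higher operations, not this bracket.

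Also, $V$ is not ``the weight-zero part'' of the geometric deformation space. It contains $H^0(X,\cg_\sigma)_{\Frob}$, the directions deforming the Frobenius element inside the automorphisms of the geometric local system. These do not come from the formal completion of $\LS^{\on{restr}}_\cG$ at all.

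\textbf{The missing idea is a sign argument.} In the example, $\Aut(\sigma)=\cT$. Write $\alpha_{ij}$ for the root of $E_{ij}$. The coordinates on the extension directions, and all equations of the slice, have $\cT$-weights in the cone spanned by $-\alpha_{12}$ and $-\alpha_{23}$, minus the origin. The coordinates on $H^0(X,\cg_\sigma)_{\Frob}=\ft$ have weight $0$. Hence no invariant involves an extension coordinate, the ideal of equations has no nonzero invariants, and the invariants are just the power series ring on $\ft$.

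In general, fix $\iota:\ol\BQ_\ell\simeq\BC$. Let $z\in\Aut(\sigma)$ act by $q^{w/2}$ on the weight-$w$ part of every representation composed with $\sigma$. This uses that $\sigma$ is semi-simple and that irreducible Weil representations are pure up to twist. Then the Frobenius of $\sigma$ is $z\cdot\phi_0$, with $\phi_0$ pure of weight $0$ and commuting with $z$.

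By Weil~II, $\on{Ad}(z)$ then acts with eigenvalues of the following absolute values:
\begin{itemize}
\item $q^{-1/2}$ on the $\Frob$-fixed part of $H^1(X,\cg_\sigma)$;
\item $q^{-1/2}$ and $q^{-1}$ on the obstruction pieces $H^1(X,\cg_\sigma)_{\Frob}$ and $H^2(X,\cg_\sigma)^{\Frob}$ respectively;
\item $1$ on $H^0(X,\cg_\sigma)_{\Frob}$.
\end{itemize}
Take an $\Aut(\sigma)$-equivariant Kuranishi presentation of the slice. Invariance under the diagonalizable group generated by $z$ then reduces everything to $\Aut(\sigma)$-invariants in a power series ring on $H^0(X,\cg_\sigma)_{\Frob}$. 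That ring is a domain, so no formality is needed.

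Finally, semi-simplicity of $G$ plays no role in reducedness: for a torus, the central directions just give $\BG_m$-factors. Your explanation of where semi-simplicity enters is therefore not an argument.
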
 

In this subsection we will show how \thmref{t:Exc red} implies a particular case of the Ramanujan conjecture
(for function fields, everywhere unramified case). 

\sssec{}

Let 
$$\LS^{\on{arithm}}_{\cG,\on{irred}}\subset \LS^{\on{arithm}}_\cG$$
be the locus of \emph{irreducible} Weil $\cG$-local systems (i.e., those local systems that
do not admit reductions to proper parabolics). 

\medskip

Recall that according to \cite[Theorem 24.1.6]{AGKRRV1}, $\LS^{\on{arithm}}_{\cG,\on{irred}}$ is a union 
of connected components of $\LS^{\on{arithm}}_\cG$, each of the form
$$\on{pt}/\Gamma,$$
where $\Gamma$ is a finite group. 

\medskip

Thus, 
$$\omega_{\LS^{\on{arithm}}_\cG}|_{\LS^{\on{arithm}}_{\cG,\on{irred}}} \simeq
\CO_{\LS^{\on{arithm}}_{\cG,\on{irred}}}.$$

In particular, 
$$\Gamma(\LS^{\on{arithm}}_{\cG,\on{irred}},\CO_{\LS^{\on{arithm}}_{\cG,\on{irred}}})\simeq
\underset{\sigma\in \LS^{\on{arithm}}_{\cG,\on{irred}}/\sim}\oplus\, \ol\BQ_\ell$$
is a direct summand of 
$$\Gamma^\IndCoh(\LS^{\on{arithm}}_\cG,\omega_{\LS^{\on{arithm}}_\cG}).$$

\sssec{}

Denote
$$'\!\LS^{\on{arithm}}_{\cG,\on{irred}}:=\LS^{\on{arithm}}_{\cG,\on{irred}}\cap {}'\!\LS^{\on{arithm}}_\cG.$$

We obtain the corresponding direct summand 
\begin{equation} \label{e:irred autom}
\Gamma({}'\!\LS^{\on{arithm}}_{\cG,\on{irred}},\CO_{'\!\LS^{\on{arithm}}_{\cG,\on{irred}}})\simeq
\underset{\sigma\in {}'\!\LS^{\on{arithm}}_{\cG,\on{irred}}/\sim}\oplus\, \ol\BQ_\ell
\end{equation} 
of 
$$\Gamma^\IndCoh({}'\!\LS^{\on{arithm}}_\cG,\omega_{'\!\LS^{\on{arithm}}_\cG}).$$

\medskip

Let 
$$\sFunct_c(\Bun_G(\BF_q),\ol\BQ_\ell)_{\on{irred}}\subset \sFunct_c(\Bun_G(\BF_q),\ol\BQ_\ell)$$
be the subspace corresponding to \eqref{e:irred autom} under the isomorphism $\sL_G$.

\sssec{}

Equivalently,
$$\sFunct_c(\Bun_G(\BF_q),\ol\BQ_\ell)_{\on{irred}}=
\Gamma(\LS^{\on{arithm}}_{\cG,\on{irred}},\CO_{\LS^{\on{arithm}}_{\cG,\on{irred}}})
\underset{\Gamma(\LS^{\on{arithm}}_\cG,\CO_{\LS^{\on{arithm}}_\cG})}\otimes 
\sFunct_c(\Bun_G(\BF_q),\ol\BQ_\ell),$$
where we view $\Gamma(\LS^{\on{arithm}}_{\cG,\on{irred}},\CO_{\LS^{\on{arithm}}_{\cG,\on{irred}}})$ as a direct factor of
$\Gamma(\LS^{\on{arithm}}_\cG,\CO_{\LS^{\on{arithm}}_\cG})$.

\sssec{}

Let 
\begin{equation} \label{e:cusp}
\sFunct_{\on{cusp}}(\Bun_G(\BF_q),\ol\BQ_\ell)\subset \sFunct_c(\Bun_G(\BF_q),\ol\BQ_\ell)
\end{equation} 
be the subspace of cuspidal functions.

\medskip

Denote
$$\sFunct_{\on{cusp}}(\Bun_G(\BF_q),\ol\BQ_\ell)_{\on{non-degen}}:=
\sFunct_{\on{cusp}}(\Bun_G(\BF_q),\ol\BQ_\ell)\cap \sFunct_c(\Bun_G(\BF_q),\ol\BQ_\ell)_{\on{non-degen}}.$$

\sssec{}

We will prove:

\begin{thm} \label{t:non-deg Ramanujan}
The subspaces 
$$\sFunct_{\on{cusp}}(\Bun_G(\BF_q),\ol\BQ_\ell)_{\on{non-degen}} \subset \sFunct_c(\Bun_G(\BF_q),\ol\BQ_\ell)\supset 
\sFunct_c(\Bun_G(\BF_q),\ol\BQ_\ell)_{\on{irred}}$$
coincide.
\end{thm}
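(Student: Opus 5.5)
The plan is to prove the two inclusions separately, working on the spectral side of $\sL_G$. By \corref{c:temp=nondeg}, $\sFunct_c(\Bun_G(\BF_q),\ol\BQ_\ell)_{\on{non-degen}}$ corresponds to the image of \eqref{e:temp to all}. This image is the cyclic $\on{Exc}(X,\cG)$-module generated by the image of $1$ under \eqref{e:O to omega}.

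\emph{Inclusion $\supseteq$.} Over the open and closed locus ${}'\!\LS^{\on{arithm}}_{\cG,\on{irred}}$, each component is $\on{pt}/\Gamma$. There $\omega\simeq\CO$, and the map \eqref{e:Xi Tr bis refined} should restrict to an isomorphism; I would check this by computing the class of $\Xi(\CO)$ on $\on{pt}/\Gamma$. Hence the direct summand \eqref{e:irred autom} lies in the image of \eqref{e:temp to all}, so $\sFunct_c(\Bun_G(\BF_q),\ol\BQ_\ell)_{\on{irred}}\subset\sFunct_c(\Bun_G(\BF_q),\ol\BQ_\ell)_{\on{non-degen}}$.

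For cuspidality I would use that the constant term maps $\sFunct_c(\Bun_G(\BF_q))\to\sFunct(\Bun_M(\BF_q))$, for proper Levi subgroups $M$, intertwine the $\on{Exc}(X,\cG)$-action with the $\on{Exc}(X,\cM)$-action through $\LS^{\on{arithm}}_\cM\to\LS^{\on{arithm}}_\cG$. Since that map lands in the reducible locus, the idempotent cutting out the irreducible components kills every constant term. This compatibility of the excursion action with constant terms (\cite{Xue}) is the one external input I would quote.

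\emph{Inclusion $\subseteq$.} Cuspidal functions form a finite-dimensional $\on{Exc}(X,\cG)$-stable subspace, so they split as a direct sum of generalized eigenspaces over a finite set $S$ of maximal ideals of $\on{Exc}(X,\cG)$. It then suffices to show two things: every $\sigma\in S$ that meets the non-degenerate part is irreducible, and at an irreducible $\sigma$ the non-degenerate cuspidal part is the full summand $\ol\BQ_\ell$.

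The second point follows from the first step, because the irreducible component is $\on{pt}/\Gamma$ and the local ring of $\on{Exc}(X,\cG)$ there is a field. For the first point, take $\sigma$ reducible, with semisimplification reducing to a proper Levi $\cM$. Twisting by unramified characters of $Z(\cM)/Z(\cG)$ moves $\sigma$ in a positive-dimensional family, which is nontrivial since $G$ is semisimple. Hence $\sigma$ is not an isolated point of $\on{Spec}(\on{Exc}(X,\cG))$.

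\emph{Main obstacle.} I expect the hardest step to be excluding non-isolated points from the support of $\on{non-degen}\cap\on{cusp}$. Write $\on{non-degen}\simeq\on{Exc}(X,\cG)/J$. A finite-length submodule of $\on{Exc}(X,\cG)/J$ could a priori sit at an embedded point of $J$. I would first try to show that $J=0$ near such $\sigma$, i.e.\ that the map \eqref{e:temp to all} is injective there. The intended argument pairs $\varphi\cdot\poinc^{\on{Vac}}_!$ against Eisenstein series whose parameters run over the twisting family, using the nonvanishing of Whittaker coefficients of generic Eisenstein series.

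Granting $J=0$ near $\sigma$, the statement reduces to algebra. By \thmref{t:Exc red}, $\on{Exc}(X,\cG)$ is reduced, and its local ring at $\sigma$ has positive dimension. A reduced local ring of positive dimension has depth $>0$ at the maximal ideal, so it contains no nonzero finite-length submodule. Hence no nonzero cuspidal non-degenerate function can be supported at $\sigma$.
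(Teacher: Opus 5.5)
Your architecture is the paper's. You pass to the spectral side via $\sL_G$ and view the non-degenerate part as the cyclic $\on{Exc}(X,\cG)$-module generated by the image of $1$. Your depth argument is exactly the paper's step ``$\on{Exc}(X,\cG)$ is nilpotent-free (\thmref{t:Exc red}), so its locally finite part is the functions on the discrete part of the coarse space''. The $Z^0_\cM$-twist shows that isolated points are irreducible.

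The genuine gap is the step you call the main obstacle, $J=0$ near a reducible $\sigma$, i.e.\ injectivity of \eqref{e:temp to all}. You leave it open and propose an Eisenstein-series argument. The paper proves no such thing: its proof is explicitly conditional on \conjref{c:gr}, and \corref{c:filtr part}(b) gives injectivity of \eqref{e:O to omega} for ${}'\!\LS^{\on{arithm}}_\cG$. Hence $J$ is exactly the kernel of the projection of $\on{Exc}(X,\cG)$ onto its direct factor $\Gamma({}'\!\LS^{\on{arithm}}_\cG,\CO_{{}'\!\LS^{\on{arithm}}_\cG})$. That factor is reduced, so your depth argument applies verbatim, with no local analysis at $\sigma$.

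Without that input, your Eisenstein sketch does not close the gap. Pairing $\varphi\cdot\poinc^{\on{Vac}}_!$ against Eisenstein series only shows that $\varphi$ vanishes on the images of the particular families you pair against. To conclude $\varphi=0$ in the reduced local ring at $\sigma$, you would need those images, with nonzero Whittaker coefficient, to be Zariski dense in every branch of $\on{Spec}(\on{Exc}(X,\cG))$ through $\sigma$. That requires generic cusp forms on every Levi realizing every relevant irreducible parameter, which is essentially the statement being proved, for Levis.

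Three smaller points.
\begin{itemize}
\item For cuspidality of the irreducible summand you do not need constant terms. The summand is finite-dimensional, since there are finitely many components because $\LS^{\on{arithm}}_\cG$ is quasi-compact. It is also $\on{Exc}(X,\cG)$-stable, so it consists of locally finite vectors and is cuspidal by the characterization recalled in \secref{sss:Ram 1}.
\item Its containment in the non-degenerate part needs no class computation. On each $\on{pt}/\Gamma$, the map \eqref{e:O to omega} is an injective map between one-dimensional spaces.
\item ``Moves $\sigma$ in a positive-dimensional family'' must be upgraded, as in the paper, to non-constancy of the image in $\LS^{\on{arithm,coarse}}_\cG$. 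Twist the image of Frobenius by elements of $Z^0_\cM$, use the finite map $\cM/\!/\on{Ad}(\cM)\to\cG/\!/\on{Ad}(\cG)$, and project to $\cM/[\cM,\cM]$. There the twist acts by translation through the isogeny $Z^0_\cM\to\cM/[\cM,\cM]$. The twist is by $Z^0_\cM$-valued unramified characters, not characters of $Z(\cM)/Z(\cG)$.
\end{itemize}
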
 

In other words, \thmref{t:non-deg Ramanujan} says that the non-degenerate subspace of cuspidal
automorphic functions is characterized by its support with respect to the action of the excursion algebra
on all automorphic functions. Namely, this is the subspace supported over the irreducible locus. 

\begin{rem} 

Let us emphasize that the proof of \thmref{t:non-deg Ramanujan}, given below, is conditional on \conjref{c:gr}: we are using that the map
\eqref{e:O to omega}, and hence \eqref{e:temp to all}, is injective. 

\end{rem}

\sssec{}

In \secref{ss:cl Ramanujan} we will explain how \thmref{t:non-deg Ramanujan} relates to a particular
case of the Ramanujan-Arthur conjecture. 

\medskip

Note also that \thmref{t:non-deg Ramanujan} is a special case of \cite[Conjecture 12.7]{VLaf}. 
In particular,  \thmref{t:non-deg Ramanujan} says that for elements in $\sFunct_{\on{cusp}}(\Bun_G(\BF_q),\ol\BQ_\ell)_{\on{non-degen}}$,
the Arthur $SL_2$ is trivial. 

\ssec{Proof of \thmref{t:non-deg Ramanujan}}

\sssec{} \label{sss:Ram 1}

Recall (see \cite[Proposition 8.23]{VLaf}) that the subspace \eqref{e:cusp} is characterized as the subspace consisting 
of vectors, on which the excursion algebra acts locally finitely (i.e., the action factors through an ideal
of finite codimension).

\medskip 

Hence, taking into account the isomorphism $\sL_G$, \thmref{t:exc} and the injectivity of \eqref{e:O to omega}  
the assertion of \thmref{t:non-deg Ramanujan} can be strengthened as follows:

\medskip

\noindent{\bf Claim 1:} {\it The subspace of 
$$\Gamma(\LS^{\on{arithm}}_\cG,\CO_{\LS^{\on{arithm}}_\cG})=\on{Exc}(X,\cG)$$ consisting of vectors, on which the action of $\on{Exc}(X,\cG)$
is locally finite, coincides with
$$\Gamma(\LS^{\on{arithm}}_{\cG,\on{irred}},\CO_{\LS^{\on{arithm}}_{\cG,\on{irred}}}).$$} 

\sssec{}

Since $\Gamma(\LS^{\on{arithm}}_\cG,\CO_{\LS^{\on{arithm}}_\cG})$ is nilpotent-free (by \thmref{t:Exc red}), the above subspace
equals 
$$\Gamma(\LS^{\on{arithm,coarse}}_{\cG,\on{discr}},\CO_{\LS^{\on{arithm,coarse}}_{\cG,\on{discr}}}),$$
where:

\begin{itemize}

\item $\LS^{\on{arithm,coarse}}_\cG:=\Spec(\Gamma(\LS^{\on{arithm}}_\cG,\CO_{\LS^{\on{arithm}}_\cG}))$;

\item $\LS^{\on{arithm,coarse}}_{\cG,\on{discr}}\subset \LS^{\on{arithm,coarse}}_\cG$ is the subscheme equal
to the union of connected components that are isomorphic to the point-scheme.

\end{itemize} 

\medskip

Let $\fr^{\on{arithm}}$ denote the tautological projection
$$\LS^{\on{arithm}}_\cG\to \LS^{\on{arithm,coarse}}_\cG.$$

We can reformulate the Claim in \secref{sss:Ram 1} as follows: 

\medskip

\noindent{\bf Claim 2:} {\it The map $\fr^{\on{arithm}}$ induces a bijection between connected components of $\LS^{\on{arithm}}_{\cG,\on{irred}}$
and $\LS^{\on{arithm,coarse}}_{\cG,\on{discr}}$.}

\sssec{} \label{sss:sigma}

One direction is clear: a connected component of  $\LS^{\on{arithm}}_\cG$ of the form 
$\on{pt}/\Gamma$ maps to a connected component of $\LS^{\on{arithm,coarse}}_{\cG,\on{discr}}$ isomorphic
to $\on{pt}$.

\medskip

For the opposite implication we have to show that if $\sigma$ is a Weil $\cG$-local system on $X$, such that
$\fr^{\on{artihm}}(\sigma)$ belongs to a point component of $\LS^{\on{arithm,coarse}}_\cG$, then $\sigma$
is in fact irreducible.

\sssec{}

Recall (see, e.g., \cite[Lemma 11.9]{VLaf}\footnote{The quoted reference treats the moduli space of $n$-tuples in $\cG$ up to 
conjugation. However, this case implies that the description of the points of the coarse moduli space for $\bMaps(H,\cG)/\on{Ad}(\cG)$,
for a (pro-)algebraic group $H$, of which $\LS^{\on{arithm,coarse}}_\cG$ is a particular case.}) that $\ol\BQ_\ell$-points of $\LS^{\on{arithm,coarse}}_\cG$ are in bijection
with isomorphism classes of \emph{semi-simple} Weil $\cG$-local systems on $X$, and the map 
$\fr^{\on{arithm}}$ sends a given local system to the isomorphism class of its semi-simplification.

\medskip

Hence, we can assume that the local system $\sigma$ in \secref{sss:sigma} is semi-simple. Let us view
it as a homomorphism
$$\on{Weil}(X)\to \cG.$$

Suppose that $\sigma$ is \emph{not} irreducible. Then its image in $\cG$ is contained in a proper Levi 
subgroup; denote it $\cM$. In particular, the image of $\sigma$ is centralized by the connected 
center $Z^0_\cM$ of $\cM$. 

\medskip

Note that we can produce a map from $Z^0_\cM$ to $\LS^{\on{arithm}}_\cG$, i.e., a family of homomorphisms
$$\on{Weil}(X)\to \cG$$
parameterized by points of $Z^0_\cM$, where each such homomorphism 
is equal to $\sigma$ on $\pi_{1,\on{geom}}(X)$ and where we 
modify the image of the Frobenius element in 
$$\on{Weil}(X)/\pi_{1,\on{geom}}(X)\simeq \BZ$$
by the given element of $Z^0_\cM$. 

\medskip

We claim that the resulting map from $Z^0_\cM$ to $\LS^{\on{arithm,coarse}}_\cG$ is \emph{non-constant}.
(That would imply that the image of $\sigma$ could not have been a point component of $\LS^{\on{arithm,arithm}}_\cG$.)

\sssec{}

To prove the above claim, it is enough to show that the family of elements of $\cG/\!/\on{Ad}(\cG)$ we obtain by composing
$$\{*\} \to \on{Weil}(X) \overset{\text{translate by an element of }Z^0_\cM}\longrightarrow \cG\to \cG/\!/\on{Ad}(\cG)$$
(where the first arrow corresponds to \emph{some} choice of the Frobenius element in $\on{Weil}(X)$) is non-constant. 

\medskip

Since the map $\cM/\!/\on{Ad}(\cM)\to \cG/\!/\on{Ad}(\cG)$ is finite, it suffices to show that the resulting family of maps
$$\{*\} \to \on{Weil}(X) \to \cM\to \cM/\!/\on{Ad}(\cM)$$
is non-constant. 

\medskip

However, this is evident since the map
$$\{*\} \to \on{Weil}(X) \to \cM\to \cM/\!/\on{Ad}(\cM)\to \cM/[\cM,\cM]$$
is equal (up to translation) to the composition
$$\{*\} \overset{\text{given element of }Z^0_\cM}\longrightarrow  Z^0_\cM\to \cM/[\cM,\cM],$$
where the second arrow is the natural projection. 

\qed[\thmref{t:non-deg Ramanujan}]

\ssec{The rationality conjecture}

In this subsection we continue to assume the validity of \conjref{c:gr} and hence of \corref{c:filtr}. 

\sssec{}

Consider the filtration
\begin{equation} \label{e:filtr}
Y\rightsquigarrow \sFunct_c(\Bun_G(\BF_q),\ol\BQ_\ell)_Y, \quad Y\subset \Nilp
\end{equation} 
on $\sFunct_c(\Bun_G(\BF_q),\ol\BQ_\ell)$.

\medskip

Note that the $\ol\BQ_\ell$-vector space $\sFunct_c(\Bun_G(\BF_q),\ol\BQ_\ell)$ has a natural $\BQ$-structure:
$$\sFunct_c(\Bun_G(\BF_q),\ol\BQ_\ell)\simeq
\ol\BQ_\ell\underset{\BQ}\otimes \sFunct_c(\Bun_G(\BF_q),\BQ).$$

\sssec{}

The following conjecture was proposed jointly with 
D.~Kazhdan\footnote{Note that all nilpotent orbits in $\cg$ are defined over $\BQ$, and hence so are all subsets $Y$.}:

\begin{conj} \label{c:filtr rat}
The filtration \eqref{e:filtr} is defined over $\BQ$.
\end{conj} 

\sssec{} \label{sss:filtr rat}

Assuming \conjref{c:filtr rat}, we will denote the resulting filtration on $\sFunct_c(\Bun_G(\BF_q),\BQ)$ by
$$Y\rightsquigarrow \sFunct_c(\Bun_G(\BF_q),\BQ)_Y.$$ 

For a given nilpotent orbit $\bO$, denote
$$\sFunct_c(\Bun_G(\BF_q),\BQ)_\bO:=\sFunct_c(\Bun_G(\BF_q),\BQ)_{\ol\bO}/\sFunct_c(\Bun_G(\BF_q),\BQ)_{\ol\bO-\bO}.$$

\medskip

For a field $\BQ\subset \sk$, we will denote by
$$\sFunct_c(\Bun_G(\BF_q),\sk)_Y \text{ and } \sFunct_c(\Bun_G(\BF_q),\sk)_\bO$$
the corresponding subspace/subquotient of $\sFunct_c(\Bun_G(\BF_q),\sk)$. 

\medskip

The cases of interest are $\sk=\ol\BQ$ and $\sk=\BC$. 

\sssec{}

A particular case of \conjref{c:filtr rat} is:

\begin{conj} \label{c:temp rat}
The subspace 
$$\sFunct_c(\Bun_G(\BF_q),\ol\BQ_\ell)_{\on{temp}} \overset{\text{\corref{c:temp=nondeg}}}= \sFunct_c(\Bun_G(\BF_q),\ol\BQ_\ell)_{\on{non-degen}}$$
of $\sFunct_c(\Bun_G(\BF_q),\ol\BQ_\ell)$
is defined over $\BQ$.
\end{conj} 

\sssec{}

%We will now relate \conjref{c:temp rat} to a (particular case of the) conjecture stated in \cite[???]{VLaf}. 
Recall (assuming \conjref{c:gr}, and hence the injectivity of \eqref{e:temp to all}) that the action of
$$\Gamma({}'\!\LS^{\on{arithm}}_\cG,\CO_{'\!\LS^{\on{arithm}}_\cG})$$
on $\poinc^{\on{Vac}}_!$ defines an isomorphism
\begin{equation} \label{e:act on poinc}
\Gamma({}'\!\LS^{\on{arithm}}_\cG,\CO_{'\!\LS^{\on{arithm}}_\cG})\simeq \sFunct_c(\Bun_G(\BF_q),\ol\BQ_\ell)_{\on{non-degen}}.
\end{equation} 

It is easy to see (e.g., by the method of \cite[Sect. 7.1]{GR}) that the element $\poinc^{\on{Vac}}_!$ in fact belongs to $\sFunct_c(\Bun_G(\BF_q),\BQ)$. 

\medskip

Thus, \conjref{c:temp rat} admits the following strengthening: 
 
\begin{conj} \label{c:exc rat prel}
The algebra $\Gamma({}'\!\LS^{\on{arithm}}_\cG,\CO_{'\!\LS^{\on{arithm}}_\cG})$ possesses a rational structure, 
uniquely characterized by the property that the action of a rational element sends $\poinc^{\on{Vac}}_!$ to 
a rational element of $\sFunct_c(\Bun_G(\BF_q),\ol\BQ_\ell)$.
\end{conj}

\sssec{}

We now note that a rational structure on 
$$\on{Exc}(X,\cG)\simeq \Gamma(\LS^{\on{arithm}}_\cG,\CO_{\LS^{\on{arithm}}_\cG})$$
has been constructed in \cite[Theorem 5.1.2]{GaLiRe}. 

\medskip

Following \cite[Conjecture 12.12]{VLaf}, we propose the following strengthening of \conjref{c:exc rat prel}:

\begin{conj} \label{c:exc rat}
The action of $\on{Exc}(X,\cG)$ on $\sFunct_c(\Bun_G(\BF_q),\ol\BQ_\ell)$ is compatible with the rational structures.
\end{conj}

The following results from \cite[Theorem 5.1.9]{GaLiRe}:

\begin{thm}
The statement of \conjref{c:exc rat} holds for $G=GL_n$.
\end{thm}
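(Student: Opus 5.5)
The statement is that \conjref{c:exc rat} holds for $G=GL_n$. My plan is to reduce it to rationality of the excursion operators on the tempered (non-degenerate) part, and then to exploit that for $GL_n$ every cuspidal automorphic representation is generic. The key input is a description of the $\on{Exc}(X,\cG)$-action through Hecke operators. By V.~Lafforgue's construction, as extended in \cite{Xue} and transferred through $\sL_G$, the action is compatible with the Hecke operators at closed points $x\in|X|$. Concretely, the image in $\on{Exc}(X,\cG)$ of the functions $\on{tr}(V,\Frob_x)$, for $V\in\Rep(\cG)$, acts by the unramified Hecke operators $T_{V,x}$. These operators are given by integral kernels, namely functions of IC-sheaves of Hecke stacks, and for $GL_n$ those are $\BQ$-valued (even $\BZ$-valued after normalization). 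So the first step is to check that the rational structure on $\on{Exc}(X,\cG)$ from \cite[Theorem 5.1.2]{GaLiRe} is the one for which these elements are rational. I expect this follows from how that rational structure is built, via traces of Frobenii.

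The second step is to show that $\on{Exc}(X,\cG)$ is generated, in a suitable completed sense, by the Hecke elements $\on{tr}(V,\Frob_x)$. For $GL_n$, Chenevier's determinant theory shows that pseudo-characters, i.e. points of $\LS^{\on{arithm,coarse}}_\cG$, are determined by the characteristic polynomials of Frobenius at all $x$. Combined with \thmref{t:Exc red}, this shows that the subalgebra generated by Hecke elements has the same spectrum as $\on{Exc}(X,\cG)$, and it should show that this subalgebra is dense. The hard part is upgrading density to an equality of actions on each finite-dimensional Hecke-stable subspace of $\sFunct_c$. I would handle this by working support-by-support over $\LS^{\on{arithm,coarse}}_\cG$ and using finite-dimensionality after truncating by a quasi-compact open of $\Bun_G$ together with constant-term functors.

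The third step handles the remaining non-semisimple, i.e. non-reduced-like, action. Here I would use \thmref{t:main} and \corref{c:temp=nondeg}. Since $\poinc^{\on{Vac}}_!$ is rational and, for $GL_n$, $\sFunct_c$ is generated from non-degenerate pieces via Eisenstein series and constant terms, compatibility of the action on $\on{Exc}\cdot\poinc^{\on{Vac}}_!$ reduces to the identity $\sL_G(r\cdot\poinc^{\on{Vac}}_!)=r$. This identity reduces the question to comparing rational structures on $\on{Exc}$ itself, which holds by construction. Compatibility of Eisenstein series with the excursion action and with rationality is a parabolic induction from Levis, which are products of $GL$'s, so I would argue by induction on $n$.

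The main obstacle is the second step together with the Eisenstein reduction. The excursion algebra is derived-completed and far from generated by Hecke operators on the nose. Making sure that the rational structure of \cite{GaLiRe} matches the Hecke rational structure on the non-cuspidal, continuous-spectrum part will require explicit control of constant terms. That is where I would concentrate the effort.
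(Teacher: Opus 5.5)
The paper gives no argument for this statement; it is quoted from \cite[Theorem 5.1.9]{GaLiRe}. Read as an independent proof, your sketch has genuine gaps, and its one concrete reduction, Step 3, is circular. \thmref{t:main} and \corref{c:temp=nondeg} are statements over $\ol\BQ_\ell$. The map $\sL_G$ is only $\ol\BQ_\ell$-linear, and no $\BQ$-structure on $\Gamma^{\IndCoh}({}'\!\LS^{\on{arithm}}_\cG,\omega_{'\!\LS^{\on{arithm}}_\cG})$ is known to correspond to $\sFunct_c(\Bun_G(\BF_q),\BQ)$. So the fact that $\sL_G(r\cdot\poinc^{\on{Vac}}_!)$ is the image of $r$ under \eqref{e:O to omega} says nothing about whether $r\cdot\poinc^{\on{Vac}}_!$ is rational when $r$ is rational. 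That rationality is precisely the conjecture restricted to the non-degenerate subspace (cf.\ \conjref{c:exc rat prel}). The paper uses the implication in the opposite direction: it deduces \conjref{c:temp rat} from \conjref{c:exc rat} together with the rationality of $\poinc^{\on{Vac}}_!$. Nothing there ``holds by construction''. Moreover, identifying $\on{Exc}(X,\cG)\cdot\poinc^{\on{Vac}}_!$ with $\on{Exc}(X,\cG)$ uses injectivity of \eqref{e:O to omega}, which the paper has only conditionally on \conjref{c:gr}.

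Step 2 also fails as formulated. For $G=GL_n$ there are no nonzero finite-dimensional Hecke-stable subspaces of $\sFunct_c(\Bun_G(\BF_q),\ol\BQ_\ell)$ at all. The Hecke operator for $V=\det$ at $x$ is pullback along $\CE\mapsto\CE(\pm x)$, which shifts the degree by $n\deg(x)$, so the iterates of a finitely supported function are eventually linearly independent. Truncating to a quasi-compact open of $\Bun_G$ destroys Hecke-stability. So the Chebotarev-plus-continuity argument, which is fine on a finite-dimensional continuous module, has nothing to act on. Nor does the fact that Frobenius characteristic polynomials separate the points of $\LS^{\on{arithm,coarse}}_\cG$, even combined with reducedness, imply that Hecke elements generate $\on{Exc}(X,\cG)$, densely or otherwise (compare $\ol\BQ_\ell[t^2,t^3]\subset\ol\BQ_\ell[t]$).

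What your Steps 1--2 actually need is a statement about $\on{Exc}(X,\cG)$ itself:
(i) the elements $\on{tr}(V,\Frob_x)$, $V\in\Rep(\cG)$, $x\in|X|$, generate $\on{Exc}(X,\cG)$ as a $\ol\BQ_\ell$-algebra;
(ii) they lie in the $\BQ$-form of \cite[Theorem 5.1.2]{GaLiRe}.
Given these, faithful flatness of $\ol\BQ_\ell$ over $\BQ$ shows that the $\BQ$-algebra they generate is that $\BQ$-form. Rationality of the Hecke operators then gives the claim on all of $\sFunct_c$, with no Eisenstein induction. You leave (ii) as an expectation and do not prove (i). Proving (i) requires more than Brauer--Nesbitt and Chebotarev: you need control of completed local rings (e.g.\ via Chenevier's pseudo-deformation rings) and a global finiteness argument. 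Your Eisenstein induction would additionally need compatibility of both the action and the $\BQ$-structures with the restriction $\on{Exc}(X,\cG)\to\on{Exc}(X,\cM)$, which you do not supply.
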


\section{Relation to the Arthur and Ramanujan-Arthur conjectures} \label{s:Arth}

The material in this subsection consists entirely of conjectures, and should be viewed as a more detailed version 
of \cite{Ras}. 

\medskip

The main motif is the interaction of the filtration \eqref{e:autom filtr}, which is of cohomological nature, 
with the spectral\footnote{Here ``spectral" means "in the sense of eigenvalues".} properties of the action 
of $\on{Exc}(\cG,X)$ on the space of automorphic functions. 

\medskip

Some of our conjectures are of \emph{algebraic} nature, and as such, they pertain to the subspace of
automorphic functions (or subquotients thereof) on which $\on{Exc}(\cG,X)$ acts locally finitely. 
One of these conjectures is the Ramanujan-Arthur conjecture.

\medskip

In order to be able to talk about spectral properties of the entire space of automorphic functions,
we need to place ourselves in the analytic context. 

\ssec{The Arthur conjecture}

In this subsection we recall the statement of Arthur's conjecture on the decomposition of $L^2(\Bun_G(\BF_q))$.

\sssec{}

Let $\bO\in \cg$ be a nilpotent orbit. Choose the corresponding Jacobson-Morozov 
homomorphism
$$\phi_\bO:SL_2\to \cG;$$
it is well-defined up to conjugacy. Denote $H_\bO:=Z_\cG(\phi_\bO)$. 

\medskip

Denote by $q_\bO\in \cG(\BC)$ the image under $\phi_\bO$ of the element $q^{\frac{1}{2}}\in \BG_m(\BC)\subset SL_2(\BC)$.
Let $H^c_\bO(\BC)\subset H_\bO(\BC)$ be a maximal compact subgroup. Consider the subset
\begin{equation} \label{e:shifted compact}
q_\bO\cdot H^c_\bO(\BC)\subset \cG(\BC);
\end{equation}
it is well-defined up to conjugacy by elements of $\cG(\BC)$. 

\medskip

For example, when $\bO=\{0\}$, we have $q_\bO=1$, $H_\bO=\cG$ and \eqref{e:shifted compact} is $\cG^c(\BC)$. In the other extreme,
when $\bO$ is the regular nilpotent, we have $H_\bO=Z(\cG)$, and (say, if $G$ is semi-simple)
the subset \eqref{e:shifted compact} is $q_\bO\cdot Z(\cG)(\BC)$.

\sssec{} \label{sss:subset R}

Let 
$$\sR_{q,\bO,\BC}\subset (\cG/\!/\on{Ad}(\cG))(\BC)$$
be the image of \eqref{e:shifted compact} under the projection
$$\cG(\BC)\to (\cG/\!/\on{Ad}(\cG))(\BC).$$

\begin{lem} 
For $\bO_1\neq \bO_2$, the subsets $\sR_{q,\bO_1,\BC}$ and $\sR_{q,\bO_2,\BC}$
are disjoint. 
\end{lem}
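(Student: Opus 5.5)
Given a point of $\sR_{q,\bO_1,\BC}\cap \sR_{q,\bO_2,\BC}$, the plan is to recover $\bO$ from it by looking at absolute values of eigenvalues. A point of $(\cG/\!/\on{Ad}(\cG))(\BC)$ is the same as a semisimple conjugacy class in $\cG(\BC)$. So we must show: if $g=q_{\bO}\cdot h$ with $h\in H^c_\bO(\BC)$, then the semisimple part of $g$ determines $\bO$.

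First note that $q_\bO$ is semisimple and commutes with $H_\bO$, since $H_\bO$ centralizes $\phi_\bO$. Also $h$ lies in a compact group, so it is semisimple with eigenvalues of absolute value $1$ in any representation. Hence $g$ is itself semisimple. Its polar decomposition is $g=q_\bO\cdot h$, with $q_\bO$ the hyperbolic part (eigenvalues in $q^{\frac12\BZ}\subset \BR_{>0}$) and $h$ the elliptic part. These commute, so the decomposition is unique.

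If $g_1=q_{\bO_1}h_1$ and $g_2=q_{\bO_2}h_2$ are conjugate, then uniqueness of the polar decomposition, which is conjugation-equivariant, gives that $q_{\bO_1}$ and $q_{\bO_2}$ are conjugate in $\cG(\BC)$. Concretely, conjugate $g_1,g_2$ into a common maximal torus $\cT(\BC)=\cT^c\times \cT(\BR_{>0})$ and project to the $\cT(\BR_{>0})$-factor. This projection is $W$-equivariant, so the hyperbolic parts agree up to $W$.

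It then remains to show that $\bO\mapsto$ the conjugacy class of $q_\bO=\phi_\bO(\on{diag}(q^{1/2},q^{-1/2}))$ is injective. Since $q>1$, taking $\log_q$ identifies the conjugacy class of $q_\bO$ with the $W$-orbit of $\frac12 h_\bO\in \ct_\BR$, where $h_\bO=d\phi_\bO\left(\begin{smallmatrix}1&0\\0&-1\end{smallmatrix}\right)$ is the neutral element of the $\fsl_2$-triple. The map $\cT(\BR_{>0})\to \ct_\BR$, $\exp(x)\mapsto x$, is a $W$-equivariant bijection, so conjugacy of the $q_\bO$'s is equivalent to $W$-conjugacy of the $h_\bO$'s. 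By Kostant's theorem (the Jacobson–Morozov/Kostant classification of nilpotent orbits by weighted Dynkin diagrams), the $\cG$-orbit of $h_\bO$ determines $\bO$.

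The main point needing care is that the conjugacy class of $g$ in $\cG(\BC)$ determines the hyperbolic part. I will handle it through the torus argument above, which is where the choice of $H^c_\bO$ as a \emph{compact} subgroup is essential.
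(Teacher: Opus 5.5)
Your proof is correct, but it is organized differently from the paper's. You first show that the conjugacy class of $g=q_\bO\cdot h$ determines the conjugacy class of its hyperbolic part $q_\bO$. For this you use the uniqueness and conjugation-equivariance of the polar decomposition. You then translate this into the statement that the $\cG$-orbit of the neutral element $h_\bO$ is determined, and finally invoke Kostant's theorem that the weighted Dynkin diagram determines the nilpotent orbit.

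The paper skips the detour through $h_\bO$. It reads off from $\on{Ad}(g)$ the subspaces $\fg^2$ and $\fg^0$ on which the eigenvalues have absolute value $q$ and $1$. These are exactly the $2$- and $0$-graded pieces for $h_\bO$, so this step implicitly uses the same elliptic/hyperbolic separation that you make explicit. The paper then recovers $\bO$ directly as the saturation of the unique open $G^0$-orbit in $\fg^2$, where $G^0$ is the subgroup corresponding to $\fg^0$.

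That density statement ($e$ lies in the open $G^0$-orbit of $\fg^2$) is precisely the ingredient behind the injectivity half of Kostant's theorem. So the paper's argument is a more intrinsic version of yours. It gives an explicit inverse $g\mapsto\bO$ with no Weyl-group or torus bookkeeping. Your route is more modular: it cleanly isolates the step that uses compactness of $H^c_\bO(\BC)$, and it cites the classification as a black box.
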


\begin{proof}

The subset \eqref{e:shifted compact} consists of semi-simple elements. So it is enough to show that 
any element $g$ from it determines the nilpotent conjugacy class $\bO$ uniquely. 

\medskip

Let $\fg^2$ (resp., $\fg^0$) be the subspace of $\fg$ consisting of elements on which the eigenvalues of $g$
have absolute value $q$ (resp., $1$). The subspace $\fg^0$ is closed under Jordan decomposition; let
$G^0\subset G$ be the corresponding algebraoc subgroup; it acts on $\fg^2$ by conjugaction. 

\medskip

Then $G^0$ has a unique open orbit on $\fg^2$. Elements on this orbit belong to $\bO$. 

\end{proof} 

\sssec{}

For a place $x\in |X|$, let $\Sph(G)_{x,\BQ}$ denote the spherical Hecke algebra, i.e.,
$$\sFunct_c(G(\CO_x)\backslash G(\CK_x)/G(\CO_x),\BQ),$$
where $\CO_x\subset \CK_x$ are the local ring and field at $x$, respectively. For $K\supset \BQ$, 
denote
$$\Sph(G)_{x,K}:=K\underset{\BQ}\otimes \Sph(G)_{x,\BQ}.$$

\medskip

The algebra $\Sph(G)_{x,\BC}$ is commutative and acts on $L^2(\Bun_G(\BF_q))$ by bounded normal operators. 
(For $x_1\neq x_2$, the actions of $\Sph(G)_{x_1,\BC}$ and $\Sph(G)_{x_2,\BC}$ on $L^2(\Bun_G(\BF_q))$
commute.)

\medskip

Hence, we talk about the joint spectrum of the elements from $\Sph(G)_{x,\BC}$ acting on $L^2(\Bun_G(\BF_q))$. 

\sssec{}

Recall also Satake isomorphism
\begin{equation} \label{e:Sat}
\Sph(G)_{x,\BQ}\simeq \CO_{\cG/\!/\on{Ad}(\cG),\BQ}.
\end{equation} 

Thus, we can think of the spectrum of $\Sph(G)_{x,\BC}$ on $L^2(\Bun_G(\BF_q))$
as a subset of $(\cG/\!/\on{Ad}(\cG))(\BC)$. 

\sssec{}

Arthur's conjecture (see \cite[Conjectures 2 and 4]{Clo}) reads:

\begin{conj} \label{c:Arthur}
There exists an orthogonal decomposition 
\begin{equation} \label{e:Arthur}
L^2(\Bun_G(\BF_q))\simeq \underset{\bO}\oplus\, L^2(\Bun_G(\BF_q))_\bO,
\end{equation} 
uniquely characterized by the property that for every $x\in |X|$, the spectrum of $\Sph(G)_{x,\BC}$ acting on $L^2(\Bun_G(\BF_q))$
is contained in the subset
$$\sR_{q_x,\bO,\BC}\subset (\cG/\!/\on{Ad}(\cG))(\BC),$$
where $q_x$ is the cardinality of the residue field at $x$.
\end{conj} 

In what follows we will use the notation
$$L^2(\Bun_G(\BF_q))_{\on{temp}}:=L^2(\Bun_G(\BF_q))_{\{0\}}.$$

\begin{rem}

A special case of \conjref{c:Arthur} on the subspace of $L^2(\Bun_G(\BF_q))$ generated by principal Eisenstein 
series (from characters of the torus trivial on compact ideles) has been settled by D.~Kazhdan and A.~Okounkov
in \cite{KO}.

\end{rem}

\ssec{The discrete spectrum and the Ramanujan-Arthur conjecture}

In this subsection we continue to work in the analytic context. 

\medskip

We let $G$ be semi-simple and we assume the validity of \conjref{c:Arthur}. 

\sssec{}

Let 
$$L^2_{\on{discr}}(\Bun_G(\BF_q))\subset L^2(\Bun_G(\BF_q))$$
be the subspace of \emph{discrete automorphic functions}
(some people refer to this subspace as \emph{discrete series}). 

\medskip

I.e., this is the maximal Hilbert subspace on which the Hecke algebras $\Sph(G)_{x,\BC}$, $x\in |X|$
act locally finitely\footnote{Note that \emph{if} we impose this locally finiteness condition on \emph{compactly supported}
functions, we obtain the subspace of \emph{cuspidal} functions, see, e.g., \cite[Proposition 8.23]{VLaf}.}.

\medskip

It follows formally from \conjref{c:Arthur} that we have a decomposition
$$L^2_{\on{discr}}(\Bun_G(\BF_q))\simeq \underset{\bO}\oplus\, L^2_{\on{discr}}(\Bun_G(\BF_q))_\bO,$$
where
\begin{equation} \label{e:decomp disc}
L^2_{\on{discr}}(\Bun_G(\BF_q))_\bO=L^2_{\on{discr}}(\Bun_G(\BF_q))\cap L^2(\Bun_G(\BF_q))_\bO.
\end{equation}

\sssec{} \label{sss:cusp O}

Consider now the subspace
$$\sFunct_{\on{cusp}}(\Bun_G(\BF_q),\BC)\subset \sFunct_c(\Bun_G(\BF_q),\BC)\subset L^2(\Bun_G(\BF_q)).$$

Since the Hecke algebras $\Sph(G)_{x,\BC}$ acts locally finitely on $\sFunct_{\on{cusp}}(\Bun_G(\BF_q),\BC)$, we have
$$\sFunct_{\on{cusp}}(\Bun_G(\BF_q),\BC)\subset L^2_{\on{discr}}(\Bun_G(\BF_q)).$$

\medskip

Denote 
$$\sFunct_{\on{cusp}}(\Bun_G(\BF_q),\BC)_\bO:=\sFunct_{\on{cusp}}(\Bun_G(\BF_q),\BC)\cap L^2_{\on{discr}}(\Bun_G(\BF_q))_\bO.$$

By definition, $\sFunct_{\on{cusp}}(\Bun_G(\BF_q),\BC)_\bO$ is the subspace of $\sFunct_{\on{cusp}}(\Bun_G(\BF_q),\BC)$ consisting 
of elements, whose support in $\on{Specm}(\Sph(G)_{x,\BC})\simeq (\cG/\!/\on{Ad}(\cG))(\BC)$ is contained in $\sR_{q_x,\bO,\BC}$
for all $x\in |X|$. 

\sssec{}

Hence, from \eqref{e:decomp disc} we obtain:

\begin{conj} \label{c:Ram-Pet}
The inclusion
$$\underset{\bO}\oplus\, \sFunct_{\on{cusp}}(\Bun_G(\BF_q),\BC)_\bO\to \sFunct_{\on{cusp}}(\Bun_G(\BF_q),\BC)$$
is an isomorphism. 
\end{conj} 

Note that \conjref{c:Ram-Pet} is nothing but the Ramanujan-Arthur conjecture in the
everywhere unramified case.

\ssec{Relation between Conjectures \ref{c:Arthur} and \ref{c:filtr rat}} 

In this subsection we continue to work in the analytic context. 

\medskip

We will assume the validity Conjectures \ref{c:Arthur} and \ref{c:filtr rat}, and formulate
one more conjecture pertaining to the interaction between the two. 

\sssec{}

Recall the notations from \secref{sss:filtr rat}.  The following conjecture was proposed jointly with D.~Kazhdan:

\begin{conj} \label{c:Arth alg}
The closure of $\sFunct_c(\Bun_G(\BF_q),\BC)_Y$ in $L^2(\Bun_G(\BF_q))$ equals 
$$\underset{\bO\subset Y}\oplus\, L^2(\Bun_G(\BF_q))_\bO.$$
\end{conj} 

Note that we can reformulate \conjref{c:Arth alg} as follows:

\begin{conj} \hfill  \label{c:Arth alg prime}

\smallskip

\noindent{\em(a)}
The subspace $\sFunct_c(\Bun_G(\BF_q),\BC)_Y\subset \sFunct_c(\Bun_G(\BF_q),\BC)$
equals the intersection 
$$ \sFunct_c(\Bun_G(\BF_q),\BC) \cap \left(\underset{\bO\subset Y}\oplus\, L^2(\Bun_G(\BF_q))_\bO\right),$$
viewed as subsets of $L^2(\Bun_G(\BF_q))$.

\smallskip

\noindent{\em(b)}
For every nilpotent orbit $\bO$, the map
$$\sFunct_c(\Bun_G(\BF_q),\BC)_\bO=\sFunct_c(\Bun_G(\BF_q),\BC)_{\ol\bO}/\sFunct_c(\Bun_G(\BF_q),\BC)_{\ol\bO-\bO}\to 
L^2(\Bun_G(\BF_q))_\bO$$
has a dense image.

\end{conj} 

\sssec{}

For the rest of this subsection, we will assume that $G$ is semi-simple. 

\medskip

Fix an nilpotent orbit $\bO$ and let
$$\sFunct_{c,\on{discr}}(\Bun_G(\BF_q),\BQ)_\bO\subset \sFunct_c(\Bun_G(\BF_q),\BQ)_\bO$$
be the subspace consisting of elements that are locally finite with respect to the action of
$\Sph(G)_{x,\BQ}$, $x\in |X|$. 

\begin{rem} \label{r:fake discr}

We will refer to the space
$$\underset{\bO}\oplus\, \sFunct_{c,\on{discr}}(\Bun_G(\BF_q),\BQ)_\bO$$
as \emph{fake} discrete automorphic functions, cf. Remark \ref{r:fake}. 

\end{rem} 

\sssec{}

Denote
$$\sFunct_{c,\on{discr}}(\Bun_G(\BF_q),\BC)_\bO:=\BC\underset{\BQ}\otimes \sFunct_{c,\on{discr}}(\Bun_G(\BF_q),\BQ)_\bO.$$

In other words,
$$\sFunct_{c,\on{discr}}(\Bun_G(\BF_q),\BC)_\bO\subset \sFunct_c(\Bun_G(\BF_q),\BC)_\bO$$
is the subspace consisting of elements that are locally finite with respect to the action of
$\Sph(G)_{x,\BC}$, $x\in |X|$. 

\sssec{}

By \conjref{c:Arth alg prime}(a), we have an embedding
$$\sFunct_c(\Bun_G(\BF_q),\BC)_\bO\hookrightarrow L^2(\Bun_G(\BF_q))_\bO.$$

This map automatically restricts to an embedding
\begin{equation} \label{e:Arth alg}
\sFunct_{c,\on{discr}}(\Bun_G(\BF_q),\BC)_\bO\to L^2_{\on{discr}}(\Bun_G(\BF_q))_\bO.
\end{equation} 

\sssec{}

We propose:

\begin{conj} \label{c:alg discr}
The map \eqref{e:Arth alg} is an isomorphism.
\end{conj} 

\sssec{}

We now consider the interaction of the subspace of cuspidal functions with the filtration. Namely, 
from \conjref{c:Arth alg prime}(a) we obtain:

\medskip

\begin{conj} \label{c:filtr and Arthur and cusp C} 
Let $Y\subset \Nilp$ be a closed Ad-invariant subset. Then the subspace 
$$\sFunct_{\on{cusp}}(\Bun_G(\BF_q),\BC)\cap \sFunct_c(\Bun_G(\BF_q),\BC)_Y\subset \sFunct_{\on{cusp}}(\Bun_G(\BF_q),\BC)$$
equals $$\underset{\bO\subset Y}\oplus\, \sFunct_{\on{cusp}}(\Bun_G(\BF_q),\BC)_\bO,$$
where $\sFunct_{\on{cusp}}(\Bun_G(\BF_q),\BC)_\bO$ is as in \secref{sss:cusp O}. 
\end{conj} 

\begin{rem}

Note that from \conjref{c:filtr and Arthur and cusp C} we obtain that the composition
\begin{multline*}
\sFunct_{\on{cusp}}(\Bun_G(\BF_q),\BC)_\bO\to \sFunct_c(\Bun_G(\BF_q),\BC)_{\ol\bO}\twoheadrightarrow \\
\to \sFunct_c(\Bun_G(\BF_q),\BC)_{\ol\bO}/\sFunct_c(\Bun_G(\BF_q),\BC)_{\ol\bO-\bO}=
\sFunct_c(\Bun_G(\BF_q),\BC)_{\bO}
\end{multline*} 
is injective. Its image is clearly contained in the subspace
$$\sFunct_{c,\on{discr}}(\Bun_G(\BF_q),\BC)_{\bO}\subset \sFunct_c(\Bun_G(\BF_q),\BC)_{\bO},$$
so we obtain an embedding
\begin{equation} \label{e:cusp to alg discr}
\sFunct_{\on{cusp}}(\Bun_G(\BF_q),\BC)_\bO\hookrightarrow \sFunct_{c,\on{discr}}(\Bun_G(\BF_q),\BC)_{\bO}.
\end{equation}

This embedding is \emph{not} necessarily an isomorphism: an element in $\sFunct_c(\Bun_G(\BF_q),\BC)_{\bO}$ 
that is locally finite with respect to $\Sph(G)_x$ may not be liftable to an element of 
$\sFunct_c(\Bun_G(\BF_q),\BC)_{\ol\bO}$ with the same property.

\medskip

Note also that the composition of \eqref{e:cusp to alg discr} with the embedding 
$$\sFunct_{c,\on{discr}}(\Bun_G(\BF_q),\BC)_{\bO}
\hookrightarrow L^2_{\on{discr}}(\Bun_G(\BF_q))_\bO$$
(which, according to \conjref{c:alg discr}, is an isomorphism) is 
the tautological embedding
$$\sFunct_{\on{cusp}}(\Bun_G(\BF_q),\BC)_\bO\hookrightarrow L^2_{\on{discr}}(\Bun_G(\BF_q))_\bO.$$

Thus, the failure of \eqref{e:cusp to alg discr} to be an isomorphism is the difference between cuspidal
functions and the space of discrete automorphic functions (within the given $\bO$-direct summand). 

\medskip

(For example, for the $\bO$ being the regular nilpotent orbit, the space $L^2_{\on{discr}}(\Bun_G(\BF_q))_\bO$
is one-dimensional and is spanned by the constant function, which is obviously not cuspidal.) 

\end{rem} 

\ssec{Rational and \texorpdfstring{$\ell$}{ell}-adic versions of the Ramanujan-Arthur conjecture}

In this subsection we will assume that $G$ is semi-simple.

\medskip

We will assume \conjref{c:Arthur}. 
We will deduce consequences pertaining to the structure of $\sFunct_{\on{cusp}}(\Bun_G(\BF_q),\BQ)$ and 
$\sFunct_{\on{cusp}}(\Bun_G(\BF_q),\ol\BQ_\ell)$.

\sssec{} \label{sss:Weil as eigen}

Let $H$ be a reductive group defined over $\BQ$. We let
$$H^c(\ol\BQ)\subset H(\ol\BQ)$$
be the subset consisting of semi-simple elements, such that for every embedding $\ol\BQ\hookrightarrow \BC$,
the resulting element of $H(\BC)$ is compact (i.e., is conjugate to an element of the maximal compact subgroup).

\medskip

Let 
$$H^{c,\on{Weil}}(\ol\BQ)\subset H^c(\ol\BQ)$$
be the subset obtained by imposing the compactness condition for every $\ol\BQ\hookrightarrow \ol\BQ_\ell$, $\ell\neq p$. 

\medskip

For example, for $H=\BG_m$, the subset $\BG_m^{c,\on{Weil}}(\ol\BQ)$ is that of Weil numbers of weight $0$. The subset 
$\BG_m^c(\ol\BQ)$ is what is called in \cite{GaLiRe} ``mock Weil numbers" of weight $0$ (i.e., we impose the norm $=1$ condition 
only at the Archimedean places). 

\medskip

In general, an element $h\in H(\ol\BQ)$ belongs to $H^c(\ol\BQ)$ (resp., $\BG_m^{c,\on{Weil}}(\ol\BQ)$) 
if and only for every/some maximal
torus $h\in T_H\subset H$ and every character $\chi:T_H\to \BG_m$, the image of $h$ under $\chi$ is a 
mock Weil number (resp., Weil number) of weight $0$. 

\sssec{}

In the context of \secref{sss:subset R}, let 
$$\sR_{q,\bO,\ol\BQ}\subset (\cG/\!/\on{Ad}(\cG))(\ol\BQ)$$
be the image of
$$q_\bO\cdot H^c_\bO(\ol\BQ)\subset \cG(\ol\BQ)$$
under the projection
$$\cG(\ol\BQ)\to (\cG/\!/\on{Ad}(\cG))(\ol\BQ).$$

By construction, this subset is $\on{Gal}(\ol\BQ/\BQ)$-invariant; hence it corresponds to a subset
$$\sR_{q,\bO,\BQ}\subset \on{Specm}(\CO_{\cG/\!/\on{Ad}(\cG),\BQ}).$$

\medskip

Let 
$$\sR^{\on{Weil}}_{q,\bO,\ol\BQ}\subset (\cG/\!/\on{Ad}(\cG))(\ol\BQ) \text{ and } \sR^{\on{Weil}}_{q,\bO,\BQ}\subset \on{Specm}(\CO_{\cG/\!/\on{Ad}(\cG),\BQ})$$
be the corresponding subsets, where we replace $H^c_\bO(\ol\BQ)$ by $H^{c,\on{Weil}}_\bO(\ol\BQ)$. 

\sssec{}

Let
$$\sFunct_{\on{cusp}}(\Bun_G(\BF_q),\BQ)_\bO \subset \sFunct_{\on{cusp}}(\Bun_G(\BF_q),\BQ)$$
be the subspace consisting of elements, whose support in 
$$\on{Specm}(\Sph(G)_{x,\BQ})\simeq \on{Specm}(\CO_{\cG/\!/\on{Ad}(\cG),\BQ})$$
is contained in $\sR_{q_x,\bO,\BQ}$ for every $x\in |X|$.

\medskip

Denote
$$\sFunct_{\on{cusp}}(\Bun_G(\BF_q),\ol\BQ_\ell)_\bO:=\ol\BQ_\ell\underset{\BQ}\otimes \sFunct_{\on{cusp}}(\Bun_G(\BF_q),\BQ)_\bO.$$

\medskip

Let
$$\sFunct_{\on{cusp}}(\Bun_G(\BF_q),\BQ)^{\on{Weil}}_\bO \subset \sFunct_{\on{cusp}}(\Bun_G(\BF_q),\BQ)_\bO$$
and 
$$\sFunct_{\on{cusp}}(\Bun_G(\BF_q),\ol\BQ_\ell)^{\on{Weil}}_\bO\subset \sFunct_{\on{cusp}}(\Bun_G(\BF_q),\ol\BQ_\ell)_\bO$$
be the corresponding subsets, where we replace $\sR_{q_x,\bO,\BQ}$ by $\sR^{\on{Weil}}_{q_x,\bO,\BQ}$. 

\sssec{}

We obtain that \conjref{c:Ram-Pet} is equivalent to the following:

\begin{conj} \label{c:Ramanujan Q} The inclusion
$$\underset{\bO}\oplus\, \sFunct_{\on{cusp}}(\Bun_G(\BF_q),\BQ)_\bO\to \sFunct_{\on{cusp}}(\Bun_G(\BF_q),\BQ)$$
is an isomorphism. 
\end{conj} 

\medskip

In its turn, the statement of \conjref{c:Ramanujan Q} is equivalent to the following: 

\begin{conj} \label{c:Ramanujan Q ell} 
The inclusion
$$\underset{\bO}\oplus\, \sFunct_{\on{cusp}}(\Bun_G(\BF_q),\ol\BQ_\ell)_\bO\to \sFunct_{\on{cusp}}(\Bun_G(\BF_q),\ol\BQ_\ell)$$
is an isomorphism. 
\end{conj} 

Note that the \emph{statements} of Conjectures \ref{c:Ramanujan Q} and \ref{c:Ramanujan Q ell} are \emph{not}
conditional on \conjref{c:Arthur}.

\begin{rem}

We perceive \conjref{c:Ramanujan Q ell} as the $\ell$-adic version of the Ramanujan-Arthur conjecture. 

\medskip

As was noted above, it is equivalent to its rational version, i.e., \conjref{c:Ramanujan Q}, and the latter implies
the version with $\BC$-coefficients, i.e., \conjref{c:Ram-Pet}.

\end{rem}

\sssec{}

Let us now also assume Conjectures \ref{c:filtr rat} and \ref{c:Arth alg prime}(a). We obtain that 
\conjref{c:filtr and Arthur and cusp C} is equivalent to the following: 

\begin{conj} \label{c:filtr and Arthur and cusp} Let $Y\subset \Nilp$ a closed Ad-invariant subset. Then the intersection
$$\sFunct_{\on{cusp}}(\Bun_G(\BF_q),\BQ)\cap \sFunct_c(\Bun_G(\BF_q),\BQ)_Y\subset \sFunct_{\on{cusp}}(\Bun_G(\BF_q),\BQ)$$
equals 
$$\underset{\bO\subset Y}\oplus\, \sFunct_{\on{cusp}}(\Bun_G(\BF_q),\BQ)_\bO.$$
\end{conj} 

\sssec{}

Note that while the \emph{statement} of \conjref{c:filtr and Arthur and cusp} depends on
\conjref{c:filtr rat}, its $\ol\BQ_\ell$-counterpart (which is equivalent to it) does not:

\begin{conj} \label{c:filtr and Arthur and cusp ell}
$$\sFunct_{\on{cusp}}(\Bun_G(\BF_q),\ol\BQ_\ell)\cap \sFunct_c(\Bun_G(\BF_q),\ol\BQ_\ell)_Y\subset \sFunct_{\on{cusp}}(\Bun_G(\BF_q),\ol\BQ_\ell)$$
equals 
$$\underset{\bO\subset Y}\oplus\, \sFunct_{\on{cusp}}(\Bun_G(\BF_q),\ol\BQ_\ell)_\bO.$$
\end{conj} 

\sssec{}

In addition, we propose:

\begin{conj} \label{c:mock Weil}
The inclusion
$$\sFunct_{\on{cusp}}(\Bun_G(\BF_q),\BQ)^{\on{Weil}}_\bO \subset \sFunct_{\on{cusp}}(\Bun_G(\BF_q),\BQ)_\bO$$
is an isomorphism.
\end{conj}

As above, the statement of \conjref{c:mock Weil} is conditional on \conjref{c:filtr rat}, whereas its $\ol\BQ_\ell$-counterpart is not:

\medskip

\begin{conj} \label{c:mock Weil Q-ell}
The inclusion
$$\sFunct_{\on{cusp}}(\Bun_G(\BF_q),\ol\BQ_\ell)^{\on{Weil}}_\bO \subset \sFunct_{\on{cusp}}(\Bun_G(\BF_q),\ol\BQ_\ell)_\bO$$
is an isomorphism.
\end{conj}

\ssec{Some converse implications}

In this subsection we assume that $G$ is semi-simple.

\medskip

In the preceding subsections, we deduced the Ramanujan-Arthur conjecture over $\ol\BQ_\ell$ from \conjref{c:Arthur},
which is of analytic nature. 

\medskip

In this subsection we will formulate \conjref{c:support O discr Q ell}, which is of \emph{algebraic nature}, which also implies
the Ramanujan-Arthur conjecture over $\ol\BQ_\ell$. 

\medskip

In its turn, \conjref{c:support O discr Q ell} follows from some previously formulated conjectures. 

\sssec{}

Let us again assume Conjectures \ref{c:Arthur},  \ref{c:filtr rat} and \ref{c:Arth alg prime}(a). 
Using \eqref{e:Arth alg} we obtain: 

\begin{conj} \label{c:support O discr C}
For every $x\in X$, the support of $\sFunct_{c,\on{discr}}(\Bun_G(\BF_q),\BC)_\bO$ in $(\cG/\!/\on{Ad}(\cG))(\BC)$ 
is contained in $\sR_{q_x,\bO,\BC}$.
\end{conj} 

The latter is equivalent to: 

\begin{conj} \label{c:support O discr Q}
For every $x\in X$, the support of $\sFunct_{c,\on{discr}}(\Bun_G(\BF_q),\BQ)_\bO$ in  
$$\on{Specm}(\Sph(G)_{x,\BQ})\simeq \on{Specm}(\CO_{\cG/\!/\on{Ad}(\cG),\BQ})$$
is contained in $\sR_{q_x,\bO,\BQ}$.
\end{conj} 

In fact, we propose the following slightly stronger conjectures:

\begin{conj} \label{c:support O discr C Weil} 
For every $x\in X$, the support of $\sFunct_{c,\on{discr}}(\Bun_G(\BF_q),\BC)_\bO$ in $(\cG/\!/\on{Ad}(\cG))(\BC)$ 
is contained in $\sR^{\on{Weil}}_{q_x,\bO,\BC}$.
\end{conj}

\begin{conj} \label{c:support O discr Q Weil} 
For every $x\in X$, the support of $\sFunct_{c,\on{discr}}(\Bun_G(\BF_q),\BQ)_\bO$ in  
$$\on{Specm}(\Sph(G)_{x,\BQ})\simeq \on{Specm}(\CO_{\cG/\!/\on{Ad}(\cG),\BQ})$$
is contained in $\sR^{\on{Weil}}_{q_x,\bO,\BQ}$.
\end{conj} 

\sssec{}

Let 
$$\sFunct_{c,\on{discr}}(\Bun_G(\BF_q),\ol\BQ_\ell)_\bO\subset \sFunct_{c}(\Bun_G(\BF_q),\ol\BQ_\ell)_\bO$$
be the subspace consisting of elements that are locally finite with respect to the action of $\Sph(G)_{x,\ol\BQ_\ell}$
for all $x\in X$.  

\medskip

This condition is equivalent to be locally finite with respect to the action of $\on{Exc}(X,G)$, and hence also equivalent to
being locally finite with respect to the action of $\Sph(G)_{x,\ol\BQ_\ell}$ for any given $x$
(see \cite[Theorem 4.4.2(a)]{GaLiRe}).

\medskip

Note that if we assume \conjref{c:filtr rat}, then 
$$\sFunct_{c,\on{discr}}(\Bun_G(\BF_q),\ol\BQ_\ell)_\bO:=\ol\BQ_\ell\underset{\BQ}\otimes \sFunct_{c,\on{discr}}(\Bun_G(\BF_q),\BQ)_\bO.$$

\sssec{}

The statement of \conjref{c:support O discr Q} is equivalent to the following:
\begin{conj} \label{c:support O discr Q ell}
For every $x\in X$, the support of $\sFunct_{c,\on{discr}}(\Bun_G(\BF_q),\ol\BQ_\ell)_\bO$ in 
$$\on{Specm}(\Sph(G)_{x,\ol\BQ_\ell})\simeq (\cG/\!/\on{Ad}(\cG))(\ol\BQ_\ell)$$ is contained in 
$$\sR_{q_x,\bO,\BQ}\subset \on{Specm}(\CO_{\cG/\!/\on{Ad}(\cG),\BQ}) \subset (\cG/\!/\on{Ad}(\cG))(\ol\BQ_\ell).$$ 
\end{conj} 

\medskip

Similarly, the statement of \conjref{c:support O discr Q Weil} is equivalent to the following strengthening 
of \conjref{c:support O discr Q ell}: 

\begin{conj} \label{c:support O discr Q ell Weil}
For every $x\in X$, the support of $\sFunct_{c,\on{discr}}(\Bun_G(\BF_q),\ol\BQ_\ell)_\bO$ in 
$$\on{Specm}(\Sph(G)_{x,\ol\BQ_\ell})\simeq (\cG/\!/\on{Ad}(\cG))(\ol\BQ_\ell)$$ is contained in 
$$\sR^{\on{Weil}}_{q_x,\bO,\BQ}\subset \on{Specm}(\CO_{\cG/\!/\on{Ad}(\cG),\BQ}) \subset (\cG/\!/\on{Ad}(\cG))(\ol\BQ_\ell).$$ 
\end{conj} 

As above, the \emph{statement} of \conjref{c:support O discr Q} (resp., \conjref{c:support O discr Q Weil})  
depends on \conjref{c:filtr rat},
while that of \ref{c:support O discr Q ell} (resp., \conjref{c:support O discr Q ell Weil}) does not. 

\sssec{} \label{sss:implications}

Using the embedding
%\begin{multline*} 
%(\sFunct_{\on{cusp}}(\Bun_G(\BF_q),\BQ)\cap \sFunct_c(\Bun_G(\BF_q),\BQ)_{\ol\bO})/ \\
%(\sFunct_{\on{cusp}}(\Bun_G(\BF_q),\BQ)\cap \sFunct_c(\Bun_G(\BF_q),\BQ)_{\ol\bO-\bO})\hookrightarrow 
%\sFunct_{c,\on{discr}}(\Bun_G(\BF_q),\BQ)_\bO,
%\end{multline*}
%and similarly, 
\begin{multline*} 
(\sFunct_{\on{cusp}}(\Bun_G(\BF_q),\ol\BQ_\ell)\cap \sFunct_c(\Bun_G(\BF_q),\ol\BQ_\ell)_{\ol\bO})/ \\
(\sFunct_{\on{cusp}}(\Bun_G(\BF_q),\ol\BQ_\ell)\cap \sFunct_c(\Bun_G(\BF_q),\ol\BQ_\ell)_{\ol\bO-\bO})\hookrightarrow 
\sFunct_{c,\on{discr}}(\Bun_G(\BF_q),\ol\BQ_\ell)_\bO,
\end{multline*}
we obtain: 

\medskip

\begin{enumerate}

\item The statement of \conjref{c:support O discr Q ell} implies that 
of Conjectures \ref{c:Ramanujan Q ell} and \ref{c:filtr and Arthur and cusp ell}
unconditionally (i.e., without assuming any of \ref{c:Arthur},  \ref{c:filtr rat} and \ref{c:Arth alg prime}(a)),
and hence also \conjref{c:Ramanujan Q}. 

\medskip

\item Assuming only \conjref{c:filtr rat}, the statement of \conjref{c:support O discr Q ell}
implies that of \conjref{c:filtr and Arthur and cusp}. 

\medskip

\item The statement of \conjref{c:support O discr Q ell Weil} implies \conjref{c:mock Weil Q-ell} unconditionally.

\medskip

\item Assuming only \conjref{c:filtr rat}, the statement of \conjref{c:support O discr Q Weil} implies \conjref{c:mock Weil}. 

\end{enumerate} 

\sssec{The hierarchy of conjectures}

Here is how we view the hierarchy of difficulty of the above conjectures (which is also the reason we 
care to explain what implies what):

\medskip

\begin{enumerate}

\item We view \conjref{c:filtr rat} as the hardest. But it is reasonable to expect that one can 
deduce it assuming some strong conjectures about motives (e.g., the Tate conjecture). 
However, even if we knew \conjref{c:filtr rat}, we would not know
how to prove \conjref{c:Arth alg prime} or \conjref{c:alg discr}. 

\medskip

\item We believe that \conjref{c:support O discr Q ell Weil} is within reach (see Remark \ref{r:GLR}). This would
imply the validity of \conjref{c:Ramanujan Q ell}, hence \conjref{c:Ramanujan Q} and hence also of \conjref{c:Ram-Pet} 
(i.e., the Ramanujan-Arthur conjecture).

\medskip

\item We are optimistic about the chances of deducing Arthur's conjecture (i.e., \conjref{c:Arthur}) given (the Ramanujan-Arthur)
\conjref{c:Ram-Pet}, by extending the method of \cite{KO} or \cite{LR}. 

\end{enumerate} 

\ssec{Relation to Arthur parameters} \label{ss:cl Ramanujan}

In this subsection we continue to assume that $G$ is semi-simple.

\sssec{} 

Let us relate a particular case of \conjref{c:filtr and Arthur and cusp ell} 
to \thmref{t:non-deg Ramanujan}. 

\medskip

Take $Y=\{0\}$. Note that, on the one hand, the intersection
$$\sFunct_{\on{cusp}}(\Bun_G(\BF_q),\ol\BQ_\ell)\cap \sFunct_c(\Bun_G(\BF_q),\ol\BQ_\ell)_{\{0\}}$$
is the subspace that we denoted earlier by
$$\sFunct_{\on{cusp}}(\Bun_G(\BF_q),\ol\BQ_\ell)_{\on{non-degen}}.$$

\medskip

On the other hand,
$$\sFunct_{\on{cusp}}(\Bun_G(\BF_q),\ol\BQ_\ell)_{\{0\}}$$
is the subspace of $\sFunct_{\on{cusp}}(\Bun_G(\BF_q),\ol\BQ_\ell)$ 
consisting of elements whose support in 
$$\on{Specm}(\CO_{\cG/\!/\on{Ad}(\cG),\ol\BQ_\ell}) = (\cG/\!/\on{Ad}(\cG))(\ol\BQ_\ell)$$
lies in the image of  
\begin{equation} \label{e:Weil 0}
\cG^c(\ol\BQ)\subset \cG(\ol\BQ_\ell)\to (\cG/\!/\on{Ad}(\cG))(\ol\BQ_\ell).
\end{equation} 

Denote this subspace by 
$$\sFunct_{\on{cusp}}(\Bun_G(\BF_q),\ol\BQ_\ell)_{\on{anlt-temp}},$$
where ``anlt" stands for ``analytic". 

\sssec{} 

Thus, the particular case of \conjref{c:filtr and Arthur and cusp ell} for $\bO=\{0\}$ says that we have an equality
$$\sFunct_{\on{cusp}}(\Bun_G(\BF_q),\ol\BQ_\ell)_{\on{non-degen}} = \sFunct_{\on{cusp}}(\Bun_G(\BF_q),\ol\BQ_\ell)_{\on{anlt-temp}}$$
as subspaces in $\sFunct_{\on{cusp}}(\Bun_G(\BF_q),\ol\BQ_\ell)$. 

\medskip

We claim that \thmref{t:non-deg Ramanujan} provides an inclusion in one direction, namely
$$\sFunct_{\on{cusp}}(\Bun_G(\BF_q),\ol\BQ_\ell)_{\on{non-degen}} \subset \sFunct_{\on{cusp}}(\Bun_G(\BF_q),\ol\BQ_\ell)_{\on{anlt-temp}}.$$

Namely, we claim that we have an inclusion 
\begin{equation} \label{e:irred temp}
\sFunct_c(\Bun_G(\BF_q),\ol\BQ_\ell)_{\on{irred}} \subset \sFunct_{\on{cusp}}(\Bun_G(\BF_q),\ol\BQ_\ell)_{\on{anlt-temp}}.
\end{equation} 

\sssec{} \label{sss:pure}

Indeed, for a point $x$, consider the corresponding map
$$\LS_\cG^{\on{arithm}}\to \cG/\on{Ad}(\cG),$$
given by restriction of Weil local systems from $X$ to $x$. 

\medskip

We need to show that the composition
$$\LS_{\cG,\on{irred}}^{\on{arithm}}\hookrightarrow \LS_\cG^{\on{arithm}}\to \cG/\on{Ad}(\cG)\to  \cG/\!/\on{Ad}(\cG)$$
takes values in the image of \eqref{e:Weil 0}. 

\medskip

Unraveling, this is the statement that an irreducible Weil local system $\sigma$ is pointwise pure of weight $0$.
However, this is well-known, see, e.g., the assertion of \cite[Proposition 25.4.6]{AGKRRV1}.

\sssec{}

We now consider the case of a general nilpotent orbit $\bO$. Let
\begin{multline*}
\sFunct_{c,\on{discr}}(\Bun_G(\BF_q),\ol\BQ_\ell)_\bO\subset \sFunct_{c}(\Bun_G(\BF_q),\ol\BQ_\ell)_\bO:= \\
=\sFunct_{c}(\Bun_G(\BF_q),\ol\BQ_\ell)_{\ol\bO}/\sFunct_{c}(\Bun_G(\BF_q),\ol\BQ_\ell)_{\ol\bO-\bO}
\end{multline*}
be the subspace consisting of elements that are locally finite with respect to the action of
$\Sph(G)_{x,\ol\BQ_\ell}$, $x\in |X|$. 

\medskip

Recall the notion of \emph{Arthur parameter}, see \cite[Sect. 12.2.2]{VLaf}. Generalizing \thmref{t:non-deg Ramanujan}, we propose:

\begin{conj} \label{c:locally finite O} \hfill

\medskip

\noindent{\em(a)} 
The support of $\sFunct_{c,\on{discr}}(\Bun_G(\BF_q),\ol\BQ_\ell)_\bO$ in $\LS^{\on{arithm,coarse}}_\cG(\ol\BQ_\ell)$ 
is contained in the (finite) set consisting of Langlands parameters associated to \emph{elliptic} Arthur parameters whose $SL_2$
component corresponds to $\bO$.

\medskip

\noindent{\em(b)} For a given \emph{elliptic} Arthur parameter, the direct summand of $\LS^{\on{arithm,coarse}}_\cG(\ol\BQ_\ell)$
set-theoretically supported at it is at most one-dimensional. 

\end{conj}

\begin{rem} \label{r:GLR}

This conjecture is in fact a theorem-in-progress and is the subject of the forthcoming work \cite{GaLaRa}. 
Note that for $\bO=\{0\}$, the statement of \conjref{c:locally finite O} is equivalent to that of \thmref{t:non-deg Ramanujan}. 

\end{rem} 

\sssec{}

Note that by the same argument as in \secref{sss:pure}, the statement of \conjref{c:locally finite O}(a) 
implies that of \conjref{c:support O discr Q ell}, and in fact that of  \conjref{c:support O discr Q ell Weil}. 

\medskip

Hence, by \secref{sss:implications}, we obtain: 

\begin{conj} \label{c:cusp O} Assume that  \conjref{c:locally finite O}(a) holds. Then: \hfill

\smallskip

\noindent{\em(a)} 
Conjectures \ref{c:Ramanujan Q ell} and \ref{c:filtr and Arthur and cusp ell} hold. 

\smallskip

\noindent{\em(b)} The support of $\sFunct_{\on{cusp}}(\Bun_G(\BF_q),\ol\BQ_\ell)_\bO$ in $\LS^{\on{arithm,coarse}}_\cG(\ol\BQ_\ell)$ 
is contained in the (finite) set consisting of Langlands parameters associated to \emph{elliptic} Arthur parameters whose $SL_2$
component corresponds to $\bO$.
\end{conj}

Note that \conjref{c:cusp O} coincides with \cite[Conjecture 12.7]{VLaf}.

\ssec{Relation to motivic Langlands parameters} 

\sssec{}

Let
$$\LS^{\on{mot,coarse}}_\cG \subset \LS^{\on{arithm,coarse}}_\cG$$
be as in \cite[Sect. 5.4.3]{GaLiRe}\footnote{See {\it loc. cit.}, Remark 5.4.6, for why this locus is called ``motivic".},
which is a reformulation of \cite[Sect. 6.6]{Dr}. 

\medskip

Note that $\LS^{\on{mot,coarse}}_\cG$ is isomorphic to the union of (infinitely many) copies of the point-scheme. 

\sssec{}

From \conjref{c:locally finite O}(a) we obtain:

\begin{conj} \label{c:motivic O Q-ell}
The support of $\sFunct_{c,\on{discr}}(\Bun_G(\BF_q),\ol\BQ_\ell)_\bO$ in $\LS^{\on{arithm,coarse}}_\cG(\ol\BQ_\ell)$ 
is contained in $\LS^{\on{mot,coarse}}_\cG(\ol\BQ_\ell)$. 
\end{conj}

\sssec{}

From \conjref{c:motivic O Q-ell} we obtain: 

\begin{conj} \label{c:cusp in mot}
The support of $\sFunct_{\on{cusp}}(\Bun_G(\BF_q),\ol\BQ_\ell)$ in $\LS^{\on{arithm,coarse}}_\cG(\ol\BQ_\ell)$ 
is contained in $\LS^{\on{mot,coarse}}_\cG(\ol\BQ_\ell)$. 
\end{conj}

\sssec{}

Recall that according to \cite[Sect. 5.4.6]{GaLiRe}, the scheme $\LS^{\on{mot,coarse}}_\cG$ 
(as well as its embedding into $\LS^{\on{arithm,coarse}}_\cG$) are defined over $\BQ$.

\medskip

Assume that \conjref{c:exc rat} holds, so that we can talk about the support of elements of 
 $\sFunct_{\on{cusp}}(\Bun_G(\BF_q),\sk)$ in $\LS^{\on{arithm,coarse}}_\cG(\sk)$ for $\BQ\subset \sk$. 
 
 \medskip
 
 We obtain that \conjref{c:cusp in mot} is equivalent to: 

\begin{conj}  \label{c:motivic Q}
The support of $\sFunct_{\on{cusp}}(\Bun_G(\BF_q),\ol\BQ)$ in $\LS^{\on{arithm,coarse}}_\cG(\ol\BQ)$
is contained in
$$\LS^{\on{mot,coarse}}_\cG(\ol\BQ)\subset \LS^{\on{arithm,coarse}}_\cG(\ol\BQ).$$
\end{conj} 

\sssec{}

Let us assume for a moment Conjectures \ref{c:filtr rat}, \ref{c:exc rat}, \ref{c:Arthur}, \ref{c:Arth alg prime} and \ref{c:alg discr}. 
Then from \conjref{c:motivic Q}, we obtain that the Hecke eigenvalues on $L^2_{\on{discr}}(\Bun_G(\BF_q))$
are motivic in the following sense:

\medskip

\begin{conj} \hfill

\smallskip

\noindent{\em(a)}  
Assume that \conjref{c:locally finite O}(a) holds. Then the space $L^2_{\on{discr}}(\Bun_G(\BF_q))$ splits as a direct sum 
$$\underset{\sigma\in \LS^{\on{mot,coarse}}_\cG(\BC)}\oplus\, L^2_{\on{discr}}(\Bun_G(\BF_q))_\sigma,$$
where for every $x\in X$, the support of $L^2_{\on{discr}}(\Bun_G(\BF_q))_\sigma$ in
$$\on{Specm}(\Sph(G)_{x,\BC})\simeq (\cG/\!/\on{Ad}(\cG))(\BC)$$ 
is a single point equal to the image of $\sigma$ under
$$\LS^{\on{mot,coarse}}_\cG(\BC)\to \LS^{\on{arithm,coarse}}_\cG(\BC) \to (\cG/\!/\on{Ad}(\cG))(\BC),$$
where the last arrow corresponds to the restriction along $x\to X$. 

\smallskip

\noindent{\em(b)}  
Assume additionally that \conjref{c:locally finite O}(b) holds. Then each $L^2_{\on{discr}}(\Bun_G(\BF_q))_\sigma$
is at most one-dimensional. 

\end{conj} 

\appendix

\section{Proof of \thmref{t:present}} \label{s:proof of present}

We will focus on the question of $\Frob$-invariance, and as a by-product explain how to 
guarantee properties stated in (a) and (b) of the theorem. 

\ssec{Analysis of the orbits}

\sssec{}

Recall that the connected components of $\LS_\cG^{\on{restr}}$ are in bijection with isomorphism classes
of semi-simple $\cG$-local systems on $X$. 

\medskip

We will distinguish three scenarios for $\Frob$-orbits of connected components of $\LS_\cG^{\on{restr}}$: 

\medskip

\noindent{(a)} The orbit consists of one element;

\smallskip

\noindent{(a')} The orbit is finite;

\smallskip

\noindent{(b)} The orbit is infinite.

\sssec{}

Note that in case (b) there is nothing to prove as far as $\Frob$-invariance is concerned: choose a connected component $\CZ_\kappa$  on the given orbit;
exhibit it as 
$$\underset{i}{``\on{colim}"}\, \CZ_{\alpha,\kappa},$$
and spread this presentation in the $\Frob$-invariant way to the other connected components on this orbit. 

\sssec{}

We claim that (a') reduces to case (a): 

\medskip

For a given connected component $\CZ_\kappa$, let $n$ be the smallest integer such that
$$\Frob^n(\CZ_\kappa)=\CZ_\kappa.$$

\medskip

Up to replacing $\BF_q\rightsquigarrow \BF_{q^n}$, by case (a), we can assume that $\CZ_\kappa$
admits a presentation 
$$\underset{i}{``\on{colim}"}\, \CZ_{\alpha,\kappa},$$
preserved by $\Frob^n$. 

\medskip

This presentation spreads uniquely to other connected components on the $\Frob$-orbit of $\CZ_\kappa$
in the $\Frob$-invariant way. 

\sssec{}

Thus, we only need to consider case (a).

\begin{rem}

For a $\Frob$-orbit on $\pi_0(\LS_\cG^{\on{restr}})$ of cardinality $>1$, let 
$$\LS_\cG^{\on{restr}}{}'\subset \LS_\cG^{\on{restr}}$$
be the corresponding union of connected components. It is easy to see that
$$\Tr(\Frob,\QCoh(\LS_\cG^{\on{restr}}{}'))=0.$$

This implies that for
$$\Shv_\Nilp(\Bun_G)':=\QCoh(\LS_\cG^{\on{restr}}{}')\underset{\QCoh(\LS_\cG^{\on{restr}})}\otimes \Shv_\Nilp(\Bun_G).$$
we have 
$$\Tr(\Frob,\Shv_\Nilp(\Bun_G)')=0.$$

This implies that scenario (a) is the only one relevant in any case.

\end{rem} 

\ssec{Reduction to the problem about the coarse moduli space}

\sssec{}

Thus, we fix a Frobenius-invariant connected component $\CZ_\kappa$.
Let $\CZ^{\on{coarse}}_\kappa$ be the corresponding coarse moduli space, see \cite[Sect. 5.4]{AGKRRV1}. 

\medskip

By assumption, $\CZ^{\on{coarse}}_\kappa$ is also Frobenius-invariant.

\sssec{}

By \cite[Theorem 5.4.2]{AGKRRV1}, $\CZ^{\on{coarse}}_\kappa$ is a formal affine scheme with the underlying
reduced subscheme equal to $\on{pt}$, and the map
$$\fr:\CZ_\kappa\to \CZ^{\on{coarse}}_\kappa$$
is a relative algebraic stack.

\medskip

Hence, to produce a desired presentation of $\CZ_\kappa$, it suffices to find the corresponding 
presentation of $\CZ^{\on{coarse}}_\kappa$.

\sssec{} \label{sss:V}

We claim that in order to do so, it suffices to find a finite-dimensional $\ol\BQ_\ell$-vector space $V$, equipped with
an action of the Frobenius, and a Frobenius-equivariant map
\begin{equation} \label{e:V}
V\to \fm_{\CZ^{\on{coarse}}_\kappa},
\end{equation} 
such that the composition 
$$V\to \fm_{\CZ^{\on{coarse}}_\kappa}\to \fm_{\CZ^{\on{coarse}}_\kappa}/\fm^2_{\CZ^{\on{coarse}}_\kappa}$$
is surjective, where 
$$\fm_{\CZ^{\on{coarse}}_\kappa}\subset A_{\CZ^{\on{coarse}}_\kappa}$$
is the maximal ideal in the classical complete local $\sfe$-algebra corresponding to $\CZ^{\on{coarse}}_\kappa$.

\medskip

Indeed, consider the formal completion $(V^*)^\wedge_0$ of the dual vector space $V^*$
at $0$. Note that the resulting map
\begin{equation} \label{e:Z to V}
\CZ^{\on{coarse}}_\kappa\to (V^*)^\wedge_0
\end{equation} 
is schematic. Indeed, it suffices to show that \eqref{e:Z to V} is schematic at the classical level
(see \cite[Sect. 3.1.3]{AGKRRV1} and references therein), while for every $n$
$$(\CZ^{\on{coarse}}_\kappa)^{\on{cl}} \underset{(V^*)^\wedge_0}\times (V^*)_n \simeq 
\on{Spf}(A_{\CZ^{\on{coarse}}_\kappa}/\fm_{\CZ^{\on{coarse}}_\kappa}^n)^{\on{cl}}\simeq \on{Spec}(A_{\CZ^{\on{coarse}}_\kappa}/\fm_{\CZ^{\on{coarse}}_\kappa}^n)^{\on{cl}},$$
where $(V^*)_n$ denotes the $n$th inifinitesimal neighborhood of $0$ in $V^*$. 

\medskip

Now, 
\begin{equation} \label{e:pres V}
(V^*)^\wedge_0\simeq \underset{n}{\on{colim}}\, \left(\{0\}\underset{\wt\Sym{}^n(V^*)}\times V^*\right),
\end{equation} 
where\footnote{Although this does not matter in characteristic $0$, we distinguish $\wt\Sym^n(W)$
from $\Sym^n(W):=(W^{\otimes n})_{\Sigma_n}$.}:

\medskip

\begin{itemize}

\item For a vector space $W$, we denote:
$$\wt\Sym{}^n(W):=(W^{\otimes n})^{\Sigma_n},$$ where $\Sigma_n$ is the symmetric group (note that $\wt\Sym{}^n(V^*)\simeq (\Sym^n(V))^*$); 

\medskip

\item The map $W\to \wt\Sym{}^n(W)$ is $w\mapsto w^{\otimes n}$.

\end{itemize}

\begin{rem}

Note that if $\dim(V)>1$, the terms in the presentation \eqref{e:pres V} have a non-trivial derived structure. 
The more familiar presentation of $(V^*)^\wedge_0$ as an ind-scheme is
$$(V^*)^\wedge_0\simeq  \underset{n}{\on{colim}}\, (V^*)_n,$$
for $(V^*)_n$ as above. Note that the two presentations agree at the classical level. However, in the latter presentation,
the maps $(V^*)_n\to (V^*)^\wedge_0$ are \emph{not} regular embeddings.

\end{rem} 

\sssec{}

In order to show the existence of \eqref{e:V}, we will prove:

\begin{thm} \label{t:eigs coarse}
The eigenvalues of $\Frob$ on $\fm_{\CZ^{\on{coarse}}_\kappa}/\fm^2_{\CZ^{\on{coarse}}_\kappa}$
are Weil numbers of weight $\leq -1$.
\end{thm}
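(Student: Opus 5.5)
The plan is to compute the Frobenius weights on the cotangent space of $\CZ^{\on{coarse}}_\kappa$ by reducing to the tangent space $H^1(X,\cg_\sigma)$ of the deformation problem at the unique closed point, and then using purity.

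Let $\sigma$ be the semi-simple $\cG$-local system corresponding to $\kappa$. Since $\CZ_\kappa$ is $\Frob$-invariant, $\sigma$ is isomorphic to its Frobenius pullback, so it admits a Weil structure. Being semi-simple, $\sigma$ can be given a Weil structure that is pointwise pure of weight $0$, after a twist by the center, using Lafforgue's theorem as in the argument of \secref{sss:pure} and \cite[Proposition 25.4.6]{AGKRRV1}. Then $\cg_\sigma$ is a lisse Weil sheaf, pure of weight $0$. By Deligne's Weil II on the proper curve $X$, geometric Frobenius acts on $H^1(X,\cg_\sigma)$ with Weil numbers of weight $1$ (and on $H^2$ with weight $2$). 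Changing the auxiliary Weil structure only conjugates the Frobenius action on the relevant groups by an element of $\on{Aut}(\sigma)$. So the eigenvalue statement is independent of this choice, and I will fix it once and for all.

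Next, describe the formal neighborhood of $\sigma$. Let $\CZ^{\on{rig}}_\kappa$ be the formal moduli of deformations of $\sigma$ rigidified at a point (or framed), and let $A$ be its ring of functions, a complete local ring with maximal ideal $\fm_A$. Then $\CZ_\kappa\simeq \CZ^{\on{rig}}_\kappa/\on{Aut}(\sigma)$ up to the usual reductive-quotient formalism of \cite[Sect. 5.4]{AGKRRV1}. Since $\on{Aut}(\sigma)$ is reductive (as $\sigma$ is semi-simple),
$$A_{\CZ^{\on{coarse}}_\kappa}=A^{\on{Aut}(\sigma)}, \qquad \fm_{\CZ^{\on{coarse}}_\kappa}=\fm_A^{\on{Aut}(\sigma)}.$$
All of this is $\Frob$-equivariant once the Weil structure is fixed, with $\Frob$ acting compatibly on $A$ and on $\on{Aut}(\sigma)$. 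The tangent space of $\CZ^{\on{rig}}_\kappa$ at the closed point is $H^1(X,\cg_\sigma)$ (the framing only changes $H^0$-terms, which are killed by the quotient). Hence $\fm_A/\fm_A^2$ is a $\Frob$-equivariant quotient of $H^1(X,\cg_\sigma)^*$. Geometric Frobenius acts on functions through the dual, so its eigenvalues there have weight $-1$. It follows that $\on{gr}^n_{\fm_A}(A)$ is a $\Frob$-equivariant quotient of $\Sym^n(H^1(X,\cg_\sigma)^*)$, with all eigenvalues Weil numbers of weight $-n$.

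Finally, pass to invariants. Because $\on{Aut}(\sigma)$ is reductive, taking invariants is exact on the finite-dimensional quotients $A/\fm_A^N$. Thus $\fm_{\CZ^{\on{coarse}}_\kappa}/(\fm_{\CZ^{\on{coarse}}_\kappa}\cap\fm_A^N)$ is a $\Frob$-stable subquotient of $\fm_A/\fm_A^N$, and its eigenvalues are Weil numbers of weights in $\{-1,\dots,-(N-1)\}$.

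It remains to show that $\fm_A^N\cap \fm_{\CZ^{\on{coarse}}_\kappa}\subset \fm^2_{\CZ^{\on{coarse}}_\kappa}$ for $N\gg0$. Granting this, $\fm_{\CZ^{\on{coarse}}_\kappa}/\fm^2_{\CZ^{\on{coarse}}_\kappa}$ is a $\Frob$-equivariant quotient of the above finite-dimensional space, and the theorem follows. For the containment I would use Hilbert's finiteness theorem (a completed version): $\on{gr}(A)^{\on{Aut}(\sigma)}$ is generated by finitely many homogeneous invariants of degree $\leq N_0$. Lifting these gives topological generators $f_1,\dots,f_r$ of $\fm_{\CZ^{\on{coarse}}_\kappa}$, and every invariant in $\fm_A^N$ with $N>N_0$ is then a series in the $f_i$ with no linear term, hence lies in $\fm^2_{\CZ^{\on{coarse}}_\kappa}$.

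I expect the main obstacle to be making the identification $\CZ_\kappa\simeq \CZ^{\on{rig}}_\kappa/\on{Aut}(\sigma)$ and the formula $A_{\CZ^{\on{coarse}}_\kappa}=A^{\on{Aut}(\sigma)}$ genuinely $\Frob$-equivariant. The difficulty is that $\Frob$ acts on $\LS^{\on{restr}}_\cG$ only up to the chosen identification of $\sigma$ with its Frobenius pullback, and that choice moves the action by $\on{Aut}(\sigma)$. Since $\on{Aut}(\sigma)$-invariants are insensitive to this ambiguity, I would first check that the induced $\Frob$ on $A^{\on{Aut}(\sigma)}$ is the canonical one on $\CZ^{\on{coarse}}_\kappa$, and only then carry out the filtration argument above.
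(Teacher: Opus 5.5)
Your weight computation matches the paper's: choose a pure weight-$0$ Weil structure on $\sigma$, so that Weil II gives weight $-1$ on $H^1(X,\cg_\sigma)^*$, and hence weights $\leq -1$ on functions vanishing at the closed point of any finite-order neighborhood of it. The gap is in how you connect $\CZ^{\on{coarse}}_\kappa$ to that neighborhood.

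\textbf{The identification is false as stated.} You claim $\CZ_\kappa\simeq \CZ^{\on{rig}}_\kappa/\on{Aut}(\sigma)$ with $\CZ^{\on{rig}}_\kappa$ the formal spectrum of a complete local ring. But $\CZ_\kappa$ is a whole connected component of $\LS^{\on{restr}}_\cG$. Its $\ol\BQ_\ell$-points are all local systems with semisimplification $\sigma$, including the non-semisimple ones. After rigidifying at $x$ one gets an affine formal scheme $\CZ^{\on{rigid}_x}_\kappa$ with a $\cG$-action, in which the orbit of $\sigma$ is the unique closed orbit but not the only orbit. Only the formal completion of $\CZ_\kappa$ along $\on{pt}/\on{Aut}(\sigma)$ has the shape you describe.

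\textbf{What this leaves unproved.} A priori $A_{\CZ^{\on{coarse}}_\kappa}$ is the ring of $\cG$-invariant functions on all of $\CZ^{\on{rigid}_x}_\kappa$. So your formula $A_{\CZ^{\on{coarse}}_\kappa}=A^{\on{Aut}(\sigma)}$ is a genuine statement. Even the injectivity $\fm_{\CZ^{\on{coarse}}_\kappa}\hookrightarrow \fm_A$, which your passage to invariants needs, is not automatic. One must rule out a nonzero invariant function that vanishes to infinite order along the closed orbit but is nonzero on the non-closed orbits.

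\textbf{How the paper fills this.} This is exactly the content of \propref{p:inj}. Coarse functions are the $\cG$-invariants on $\CZ^{\on{rigid}_x}_\kappa$, since $\cG$ is reductive. A nonzero $\cG$-invariant function cannot vanish on a Zariski neighborhood of the closed orbit, because by Hilbert--Mumford every point is contracted onto that orbit by some $\BG_m\subset\cG$. Krull's theorem then gives injectivity into the completion. Finite-dimensionality of $\Gamma(\CZ^{\on{coarse},(1)}_\kappa,\CO)$ lets one stop at a finite $n$. Alternatively you could cite a formal Luna slice theorem for a finite-type model of $\CZ_\kappa$, but that is a real input you neither state nor prove.

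\textbf{Where the difficulty actually lies.} The Frobenius-equivariance issue you flag as the main obstacle is comparatively routine. A choice of Weil structure on $\sigma$ makes $\on{Aut}(\sigma)$ and the neighborhoods $\CZ^{(n)}_\kappa$ $\Frob$-equivariant, compatibly with the canonical action on the coarse space. Once the comparison is established, your Hilbert-finiteness argument for $\fm_A^N\cap \fm_{\CZ^{\on{coarse}}_\kappa}\subset \fm^2_{\CZ^{\on{coarse}}_\kappa}$ is a valid substitute for the paper's finite-dimensionality argument.

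\textbf{A secondary error about framing.} If $A$ is the \emph{framed} deformation ring, its tangent space is the space of cocycles, not $H^1(X,\cg_\sigma)$. That space contains the weight-$0$ directions $\cg/H^0(X,\cg_\sigma)$, and these are \emph{not} killed by $\on{Aut}(\sigma)$-invariants; for instance, when $\on{Aut}(\sigma)$ is finite nothing is killed. You need either an unframed slice with its $\on{Aut}(\sigma)$-action, or $\cG$-invariants along the whole orbit.
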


\ssec{Proof of the existence of \texorpdfstring{$V$}{V}}

Let us show how \thmref{t:eigs coarse} implies the existence of \eqref{e:V}. 

\sssec{}

Consider the following general situation. Let $A$ be a complete local algebra over an algebraically closed field with maximal ideal $\fm$,
and equipped with an automorphism $\phi$.

\medskip

Suppose that the eigenvalues of $\phi$ on $\fm/\fm^2$ have the following property: 

\medskip

There exists an integer $n$ such that for all $n'\geq n$, the $n'$-fold product of scalars that appear as eigenvalues
of $\phi$ on $\fm/\fm^2$ is \emph{not} an eigenvalue on $\fm/\fm^2$. 

\sssec{}

We claim:

\begin{prop} \label{p:V}
Under the above circumstances, we can find a vector space $V$, equipped with an automorphism
$\phi_V$ and a map $V\to \fm$, compatible with the automorphisms, and such that the composition
$$V\to \fm\to \fm/\fm^2$$
is surjective.
\end{prop}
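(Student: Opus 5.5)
We look for a finite-dimensional $\phi$-stable subspace $V\subset \fm$ that surjects onto $\fm/\fm^2$; then we simply take $\phi_V:=\phi|_V$. The plan is to lift $\fm/\fm^2$ to $\fm/\fm^{k}$ step by step, keeping $\phi$-equivariance at each stage, and then pass to the limit. A $\phi$-stable finite-dimensional subspace of $\fm/\fm^k$ exists trivially, since $\fm/\fm^k$ itself is finite-dimensional (this uses that $A$ is Noetherian, i.e. $\fm/\fm^2$ is finite-dimensional, which is implicit in the hypothesis). The difficulty is to make compatible choices that converge in $A=\lim A/\fm^k$.

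Decompose $\fm/\fm^2=\bigoplus_\lambda (\fm/\fm^2)_\lambda$ into generalized eigenspaces of $\phi$, and let $S$ be the finite set of eigenvalues. For each $k\ge 2$, the quotient $\fm^k/\fm^{k+1}$ is a quotient of $\Sym^k(\fm/\fm^2)$, equivariantly for $\phi$. Hence every eigenvalue of $\phi$ on $\fm^k/\fm^{k+1}$ is a $k$-fold product of elements of $S$.

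Work in the finite-dimensional $\phi$-module $\fm/\fm^{k+1}$ and take its generalized eigenspace decomposition. For $k\geq n$ (and $n\ge 2$), the hypothesis says that no $k$-fold product of elements of $S$ lies in $S$. So the generalized $S$-eigenspaces of $\fm^k/\fm^{k+1}$ vanish, and the projection
$$(\fm/\fm^{k+1})_S\to (\fm/\fm^{k})_S$$
onto the $S$-primary parts is an isomorphism. Here $(-)_S$ denotes the sum of generalized eigenspaces for eigenvalues in $S$. This uses that generalized eigenspace decomposition is exact on finite-dimensional $\phi$-modules over an algebraically closed field.

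Consequently the inverse system $(\fm/\fm^{k})_S$ stabilizes for $k\ge n$. Its limit $V:=\lim_k (\fm/\fm^k)_S$ is therefore a finite-dimensional $\phi$-stable subspace of $\lim_k \fm/\fm^k=\fm$, by completeness of $A$. Moreover $V$ surjects onto $(\fm/\fm^2)_S=\fm/\fm^2$, because for each $k$ the map $(\fm/\fm^k)_S\to(\fm/\fm^2)_S$ is surjective, again by exactness of the $S$-primary part. This gives the required $V$ with $\phi_V=\phi|_V$.

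The step I expect to need the most care is the stabilization claim. We must check that the eigenvalues on $\fm^k/\fm^{k+1}$ are exactly controlled by $k$-fold products, and that the hypothesis is applied for all $k\ge n$ simultaneously, so that the tower is eventually constant rather than merely Mittag-Leffler. Finite-dimensionality of $\fm/\fm^2$ should also be stated explicitly.
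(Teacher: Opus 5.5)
Your proof is correct and is essentially the paper's argument. The paper takes $V=(\fm/\fm^n)_S$ and lifts it through the tower $\fm/\fm^{n'}$ via the unique $\phi$-equivariant splitting of $0\to \fm^{n'}/\fm^{n'+1}\to V'\to V\to 0$, which exists because the eigenvalues on $\fm^{n'}/\fm^{n'+1}$ avoid $S$; your observation that the $S$-primary parts of the tower stabilize for $k\geq n$ is the same fact, phrased as a limit.
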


\sssec{}

Note that \thmref{t:eigs coarse} implies that the assumption of \propref{p:V} holds 
for the action of $\Frob$ on $\fm_{\CZ^{\on{coarse}}_\kappa}/\fm^2_{\CZ^{\on{coarse}}_\kappa}$.

\medskip

So, \thmref{t:eigs coarse} and \propref{p:V} imply the existence of the desired substacks. 

\sssec{Proof of \propref{p:V}} \label{sss:who is V}

Let $n$ be as in the proposition. We let $V\subset \fm/\fm^n$ be the subspace spanned
by generalized $\phi$-eigenspaces corresponding to the eigenvalues that appear in $\fm/\fm^2$.

\medskip

We claim that the embedding $V\hookrightarrow \fm/\fm^n$ can be (uniquely) lifted to a map
$$V\to \fm,$$
in a way compatible with the automorphisms.

\medskip

Since $A$ is complete, we can perform the lifting by induction. Thus, we assume that we have found
a lifting 
$$V\to \fm/\fm^{n'}, \quad n'\geq n,$$
and we wish to lift it further to a map
$$V\to \fm/\fm^{n'+1}.$$.

I.e., we have to split a short exact sequence
\begin{equation} \label{e:SES}
0\to \fm^{n'}/\fm^{n'+1}\to V'\to V\to 0
\end{equation}
in a way compatible with the endomorphisms.

\medskip

Now, the assumption of the proposition implies that the eigenvalues of $\phi$
on $\fm^{n'}/\fm^{n'+1}$ are disjoint from those on $V$.

\medskip

Hence, \eqref{e:SES} admits a unique splitting.

\qed[Proof of \propref{p:V}]

\sssec{Proof of \thmref{t:present}, point \em{(a)}} \label{sss:Chev}

To make the construction invariant with respect to $\tau$, we proceed as follows:

\medskip

It is easy to see that we only have to consider the case of $\kappa$ such that $\CZ_\kappa$ 
is itself $\tau$-invariant. In this case, we take $V$ constructed above, and replace it by
$$V\oplus V^\tau.$$

\sssec{Proof of \thmref{t:present}, point \em{(b)}} 

Since $\LS_\cG^{\on{arithm}}$ has finitely many connected 
components\footnote{This is the first part of the proof \cite[Theorem 24.1.4]{AGKRRV1}, see Sect. 25.1.2 in {\it loc.cit.},
which is different in nature from the rest of the argument},
it suffices to consider the case of one Frobenius-invariant connected component
$\CZ_\kappa$. Furthermore, the problem reduces to one about $\CZ^{\on{coarse}}_\kappa$.

\medskip

It suffices to show that in the presentation \eqref{e:pres V}, the maps
\begin{equation} \label{e:pres V bis}
(\{0\}\underset{\wt\Sym^n(V^*)}\times V^*)^{\Frob}\to (V^*)^{\Frob}
\end{equation} 
are isomorphisms. 

\medskip

In fact, we claim that both sides in \eqref{e:pres V bis} are isomorphic to $\on{pt}$. Using the isomorphism
\eqref{e:pres V}, it is sufficient to prove this for the left-hand side.

\medskip

We claim that both
$$(\wt\Sym^n(V^*))^{\Frob} \text{ and } (V^*)^{\Frob}$$ 
are isomorphic to the $\on{pt}$.

\medskip

Indeed, this follows from the fact that the eigenvalues of $\Frob$ on $V$ and on $\Sym^n(V)$ 
are $\neq 1$.

\ssec{Structure of Frobenius-equivariant orbits}

The rest of this section is devoted to the proof of \thmref{t:eigs coarse}. 

\sssec{}

Let $\sigma\in \CZ_\kappa$ be the unique closed point (see \cite[Sect. 3.7]{AGKRRV1}). 
By {\it loc. cit.}, the resulting map
$$\iota:\on{pt}/\on{Aut}(\sigma)\to \CZ_\kappa$$
is a closed embedding. 

\medskip

For an integer $n$, let $\CZ^{(n)}_\kappa$ denote
the classical $n$-th infinitesimal neighborhood of $\on{pt}/\on{Aut}(\sigma)$ in $\CZ_\kappa$.

\medskip

It follows from \cite[Theorem 1.4.5]{AGKRRV1} that $\CZ^{(n)}_\kappa$ is an algebraic stack of finite type.

\sssec{}

Let $\CZ^{\on{coarse},(1)}_\kappa$ be the 1st infinitesimal neighborhood of $\on{pt}$ in $\CZ^{\on{coarse}}_\kappa$,
and consider the fiber product: 
$$\CZ^{\on{coarse},(1)}_\kappa\underset{\CZ^{\on{coarse}}_\kappa}\times \CZ^{(n)}_\kappa,$$
understood in the sense of classical algebraic geometry. 

\medskip

We will prove:

\begin{prop} \label{p:inj}
For $n$ large enough, the map 
$$\Gamma(\CZ^{\on{coarse},(1)}_\kappa,\CO_{\CZ^{\on{coarse},(1)}_\kappa})\to
\Gamma(\CZ^{\on{coarse},(1)}_\kappa\underset{\CZ^{\on{coarse}}_\kappa}\times \CZ^{(n)}_\kappa,
\CO_{\CZ^{\on{coarse},(1)}_\kappa\underset{\CZ^{\on{coarse}}_\kappa}\times \CZ^{(n)}_\kappa})$$
is injective.
\end{prop}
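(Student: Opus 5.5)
We need to prove that $\Gamma$ of the first-order neighborhood of the point in the coarse space injects into functions on its fiber product with $\CZ^{(n)}_\kappa$ for $n$ large.

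Set $A:=A_{\CZ^{\on{coarse}}_\kappa}$ with maximal ideal $\fm$, so that $\Gamma(\CZ^{\on{coarse},(1)}_\kappa,\CO)=A/\fm^2$. By \cite[Sect. 5.4]{AGKRRV1}, $A$ is (the classical truncation of) $\Gamma(\CZ_\kappa,\CO_{\CZ_\kappa})$. Write $\CZ_\kappa$ as a quotient $\CW/\on{Aut}(\sigma)$ near $\sigma$, with $\CW$ a formal affine scheme and $\on{Aut}(\sigma)$ reductive. Concretely, one can take $\CW$ to be the formal completion of the framed (or rigidified) local system stack at $\sigma$, which is Luna-slice type. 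Then $A=\Gamma(\CW,\CO)^{\on{Aut}(\sigma)}$, and $\CZ^{(n)}_\kappa=\CW^{(n)}/\on{Aut}(\sigma)$ with $\CW^{(n)}=\on{Spec}(B/I^{n+1})$, where $B=\Gamma(\CW,\CO)$ and $I$ is the ideal of the closed orbit. Reductivity means taking invariants is exact. So the target of the map becomes
\[
\Gamma\bigl(\CZ^{\on{coarse},(1)}_\kappa\underset{\CZ^{\on{coarse}}_\kappa}\times \CZ^{(n)}_\kappa,\CO\bigr)\simeq \bigl(B/(I^{n+1}+\fm^2B)\bigr)^{\on{Aut}(\sigma)}= A/\bigl(\fm^2+(I^{n+1})^{\on{Aut}(\sigma)}+(\fm^2B)^{\on{Aut}(\sigma)}\bigr).
\]
By the Reynolds operator, $(\fm^2B)^{\on{Aut}(\sigma)}=\fm^2$. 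The claim therefore reduces to showing that $(I^{n+1})^{\on{Aut}(\sigma)}=I^{n+1}\cap A\subset \fm^2$ for $n\gg 0$.

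The key step is a Chevalley/Artin–Rees type comparison between the $I$-adic topology on $B$ and the $\fm$-adic topology on $A$. Since the reduced locus of $\CW$ is the single closed orbit, $\sqrt{\fm B}\supseteq I$, and conversely $\fm\subset I$. So $\fm B$ is $I$-primary up to the closed orbit. If $B$ were noetherian, Chevalley's lemma for the complete local ring $A$ would give the result directly: the ideals $I^{n+1}\cap A$ decrease with intersection $0$, so $I^{n+1}\cap A\subset\fm^2$ for $n\gg0$. Alternatively, $B/I^{n+1}$ is finitely generated over $A$ and $A/\fm^2$ is Artinian, so the decreasing chain of images of $I^{n+1}\cap A$ in $A/\fm^2$ stabilizes at $\bigcap_n(I^{n+1}\cap A)+\fm^2=\fm^2$, using Krull intersection in $B$. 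This is the step I would actually carry out: reduce to the finite-length module $A/\fm^2$, where stabilization is automatic, and then identify the stable value using $\bigcap_n I^{n+1}=0$.

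The main obstacle is the finiteness input. We need $B$ to be noetherian (or at least $I$-adically separated with $B/I^{n}$ finite over $A$), and we need $A$ to be the invariant ring. This is exactly where I would invoke \cite[Theorem 1.4.5]{AGKRRV1}, which says the $\CZ^{(n)}_\kappa$ are finite type, together with the description of $\CZ^{\on{coarse}}_\kappa$ in \cite[Theorem 5.4.2]{AGKRRV1}. If a global quotient presentation is awkward, I would instead work directly with the tower $\Gamma(\CZ^{(n)}_\kappa,\CO)$. Each of these is a finite-dimensional quotient of $A$, because the $\CZ^{(n)}_\kappa$ are finite type with a single point. Their inverse limit is $A$, by the definition of the coarse space as $\lim_n\on{Spec}\Gamma(\CZ^{(n)}_\kappa,\CO)$. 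The kernels $K_n$ of $A\to\Gamma(\CZ^{(n)}_\kappa,\CO)$ then form a decreasing chain with $\bigcap K_n=0$, and Chevalley's lemma gives $K_n\subset\fm^2$ for large $n$. This yields injectivity once the target is identified with $A/(\fm^2+K_n)$, which again uses exactness of invariants for the reductive $\on{Aut}(\sigma)$.
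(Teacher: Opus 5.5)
Your formal steps are fine. Exactness of invariants identifies the target with $A/(\fm^2+K_n)$, and Chevalley's lemma in the complete Noetherian local ring $A$ turns $\bigcap_n K_n=0$ into $K_n\subset\fm^2$ for $n\gg 0$. But the one non-formal input, $\bigcap_n K_n=0$, is assumed rather than proved, and it is essentially the whole content of the proposition. In your first version it is hidden in the identification $A=\Gamma(\CW,\CO)^{\on{Aut}(\sigma)}$ with $\CW$ the formal completion at $\sigma$; that identification is a strengthening of what you are asked to prove. In your second version it is attributed to ``the definition of the coarse space as $\lim_n\on{Spec}\Gamma(\CZ^{(n)}_\kappa,\CO)$'', which is not the definition. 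The ring $A$ consists of functions on all of $\CZ_\kappa$, whose underlying reduced stack contains every local system with semisimplification $\sigma$ (e.g.\ non-split extensions). The $\CZ^{(n)}_\kappa$ only see infinitesimal neighborhoods of the closed substack $\on{pt}/\on{Aut}(\sigma)$. A priori a nonzero element of $A$ could vanish to infinite order along $\on{pt}/\on{Aut}(\sigma)$; excluding this is exactly the point. Neither \cite[Theorem 1.4.5]{AGKRRV1} nor \cite[Theorem 5.4.2]{AGKRRV1} provides it, and a Luna-slice statement of the kind you invoke is not available off the shelf for formal stacks. Also, in your ``alternatively'' argument, Artinian stabilization alone does not identify the stable value with $(\bigcap_n J_n)+\fm^2$; that identification is again Chevalley's lemma.

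The missing idea is the geometric one used in the paper. First base change to the Artinian $\CZ^{\on{coarse},(1)}_\kappa$ and rigidify at $x$. Then $S:=\CZ^{\on{coarse},(1)}_\kappa\times_{\CZ^{\on{coarse}}_\kappa}\CZ^{\on{rigid}_x}_\kappa$ is an affine scheme of finite type (so Noetherian) with a $\cG$-action, and $\Gamma(S,\CO)^{\cG}=A/\fm^2$. Since $A/\fm^2$ is finite-dimensional, it suffices to prove injectivity into the completion of $\Gamma(S,\CO)$ along the preimage of $\on{pt}/\on{Aut}(\sigma)$. By Krull's intersection theorem, an element killed in that completion vanishes on a Zariski neighborhood of the closed orbit. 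If the element is $\cG$-invariant, it vanishes on the $\cG$-saturation of that neighborhood. That saturation is all of $S$, because the preimage of $\on{pt}/\on{Aut}(\sigma)$ is the unique closed $\cG$-orbit and lies in the closure of every orbit (Hilbert--Mumford). Without this unique-closed-orbit input, your argument cannot exclude invariant functions that vanish to infinite order at $\sigma$.
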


\sssec{}

Let us accept this proposition temporarily and proceed with the proof of \thmref{t:eigs coarse}.

\ssec{Estimating the Frobenius eigenvalues}

\sssec{} \label{sss:Frob acts}

By assumption, the isomorphism class of $\sigma$ is $\Frob$-invariant. In particular, the stack
$\on{pt}/\on{Aut}(\sigma)$ and hence $\CZ^{(n)}_\kappa$ acquire an action of $\Frob$. Furthermore, the map
$$\CZ^{\on{coarse},(1)}_\kappa\underset{\CZ^{\on{coarse}}_\kappa}\times \CZ^{(n)}_\kappa\to \CZ^{\on{coarse},(1)}_\kappa$$
is Frobenius-equivariant. 

\medskip

Hence, by \propref{p:inj}, in order to prove \thmref{t:eigs coarse}, it suffice to show that the eigenvalues
of $\Frob$ on 
$$\Gamma(\CZ^{\on{coarse},(1)}_\kappa\underset{\CZ^{\on{coarse}}_\kappa}\times \CZ^{(n)}_\kappa,
\CO_{\CZ^{\on{coarse},(1)}_\kappa\underset{\CZ^{\on{coarse}}_\kappa}\times \CZ^{(n)}_\kappa})$$
\emph{modulo constants} have weights $\leq -1$.

\sssec{}

Note that since $\CZ_\kappa$ is the quotient of an affine formal scheme by a reductive group, for an embedding
of closed substacks
$$\CZ'_\kappa\subset \CZ''_{\kappa}\subset \CZ_\kappa,$$
the induced map
$$\Gamma(\CZ''_{\kappa},\CO_{\CZ''_{\kappa}})\to \Gamma(\CZ'_{\kappa},\CO_{\CZ'_{\kappa}})$$
is surjective.

\sssec{}

Consider the closed embedding 
$$\CZ^{\on{coarse},(1)}_\kappa\underset{\CZ^{\on{coarse}}_\kappa}\times \CZ^{(n)}_\kappa\to \CZ^{(n)}_\kappa,$$

We obtain that it suffices to show that the eigevalues of $\Frob$ on 
$$\Gamma(\CZ^{(n)}_\kappa,\CO_{\CZ^{(n)}_\kappa})$$
\emph{modulo constants} have weights $\leq -1$.

\sssec{}

The fact that the isomorphism class of $\sigma$ is $\Frob$-invariant means that $\sigma$ admits a Weil structure;
let us choose one (we will specify a particularly convenient choice in \secref{sss:wt 0} below). 

\medskip

The choice of Weil structure on $\sigma$ equips $\on{Aut}(\sigma)$ with an action of $\Frob$, so that
the resulting action on $\on{pt}/\on{Aut}(\sigma)$ equals one from \secref{sss:Frob acts}.

\medskip

We obtain that the Frobenius eigenvalues on $\Gamma(\CZ^{(n)}_\kappa,\CO_{\CZ^{(n)}_\kappa})$ modulo
constants are contained in the set of scalars obtained as products (of cardinalities $\leq n$) 
of the Frobenius eigenvalues on
$$\on{Fib}(T^*_\sigma(\CZ_\kappa)\to T^*_\sigma(\on{pt}/\on{Aut}(\sigma))\simeq (H^1(X,\cg_\sigma))^*.$$

\sssec{} \label{sss:wt 0}

By \cite[Corollary 3.7.4]{AGKRRV1}, $\sigma$ is semi-simple. Hence, by by \cite[Proposition 25.4.6]{AGKRRV1}, we can choose 
the Weil structure on $\sigma$ so that is pure of weight $0$. 

\medskip

With this choice, $\cg_\sigma$, viewed as a local system of vector spaces, is pure of weight $0$. Hence,
by Weil-II, the Frobenius eigenvalues on $H^1(X,\cg_\sigma)$ have weight $1$. 

\qed[\thmref{t:eigs coarse}]

\ssec{Proof of \propref{p:inj}}

\sssec{}

Let $\CZ^{\on{rigid}_x}_\kappa$ be the rigidified version of $\CZ_\kappa$, i.e.,
$$\CZ^{\on{rigid}_x}_\kappa:=\CZ_\kappa\underset{\on{pt}/\cG}\times \on{pt},$$
where $\CZ_\kappa\to \on{pt}$ corresponds to the choice of a point $x\in X$.

\medskip

We have
$$\CZ_\kappa\simeq \CZ^{\on{rigid}_x}_\kappa/\cG.$$

\sssec{}

Consider the corresponding (affine) subscheme
$$\CZ^{\on{rigid}_x,(n)}_\kappa\subset  \CZ^{\on{rigid}_x}_\kappa.$$

\medskip

It suffices to show that the map
$$\Gamma(\CZ^{\on{coarse},(1)}_\kappa,\CO_{\CZ^{\on{coarse},(1)}_\kappa})\to
\Gamma(\CZ^{\on{coarse},(1)}_\kappa\underset{\CZ^{\on{coarse}}_\kappa}\times \CZ^{\on{rigid}_x,(n)}_\kappa,
\CO_{\CZ^{\on{coarse},(1)}_\kappa\underset{\CZ^{\on{coarse}}_\kappa}\times \CZ^{\on{rigid}_x,(n)}_\kappa})$$
is injective for $n$ sufficiently large. 

\sssec{}

Let $\CZ^{\on{rigid}_x,(\infty)}_\kappa$ denote the formal completion of $\CZ^{\on{rigid}_x}_\kappa$ along
the preimage of $\on{pt}/\on{Aut}(\sigma)$. 

\medskip 

Since $\Gamma(\CZ^{\on{coarse},(1)}_\kappa,\CO_{\CZ^{\on{coarse},(1)}_\kappa})$ is finite-dimensional,
it suffices to show that the map 
$$\Gamma(\CZ^{\on{coarse},(1)}_\kappa,\CO_{\CZ^{\on{coarse},(1)}_\kappa})\to
\Gamma(\CZ^{\on{coarse},(1)}_\kappa\underset{\CZ^{\on{coarse}}_\kappa}\times \CZ^{\on{rigid}_x,(\infty)}_\kappa,
\CO_{\CZ^{\on{coarse},(1)}_\kappa\underset{\CZ^{\on{coarse}}_\kappa}\times \CZ^{\on{rigid}_x,(\infty)}_\kappa})$$
is injective. 

\sssec{}

Note that since $\cG$ is reductive, the map
$$\Gamma(\CZ^{\on{coarse},(1)}_\kappa,\CO_{\CZ^{\on{coarse},(1)}_\kappa})\to
\Gamma(\CZ^{\on{coarse},(1)}_\kappa\underset{\CZ^{\on{coarse}}_\kappa}\times \CZ^{\on{rigid}_x}_\kappa,
\CO_{\CZ^{\on{coarse},(1)}_\kappa\underset{\CZ^{\on{coarse}}_\kappa}\times \CZ^{\on{rigid}_x}_\kappa})$$
is injective, with the image being the subspace of $\cG$-invariants in the right-hand side. 

\medskip

Therefore, it remains to show that the restriction map
\begin{multline*} 
\Gamma(\CZ^{\on{coarse},(1)}_\kappa\underset{\CZ^{\on{coarse}}_\kappa}\times \CZ^{\on{rigid}_x}_\kappa,
\CO_{\CZ^{\on{coarse},(1)}_\kappa\underset{\CZ^{\on{coarse}}_\kappa}\times \CZ^{\on{rigid}_x}_\kappa})\to \\
\to \Gamma(\CZ^{\on{coarse},(1)}_\kappa\underset{\CZ^{\on{coarse}}_\kappa}\times \CZ^{\on{rigid}_x,(\infty)}_\kappa,
\CO_{\CZ^{\on{coarse},(1)}_\kappa\underset{\CZ^{\on{coarse}}_\kappa}\times \CZ^{\on{rigid}_x,(\infty)}_\kappa})
\end{multline*}
is injective \emph{on $\cG$-invariant elements}. 

\sssec{}

Since the map from a Noetherian local ring to its completion is injective, 
it suffices to show that if $f$ is a non-zero $\cG$-invariant element in 
$$\Gamma(\CZ^{\on{coarse},(1)}_\kappa\underset{\CZ^{\on{coarse}}_\kappa}\times \CZ^{\on{rigid}_x}_\kappa,
\CO_{\CZ^{\on{coarse},(1)}_\kappa\underset{\CZ^{\on{coarse}}_\kappa}\times \CZ^{\on{rigid}_x}_\kappa}),$$
then it does not vanish on any Zariski neighborhood of the preimage of $\on{pt}/\on{Aut}(\sigma)$. 

\medskip

However, this follows from the Hilbert-Mumford criterion: for any point 
$$z\in \CZ^{\on{coarse},(1)}_\kappa\underset{\CZ^{\on{coarse}}_\kappa}\times \CZ^{\on{rigid}_x}_\kappa,$$ we can find
a copy of $\BG_m\subset \cG$ that contracts this point onto the preimage of $\on{pt}/\on{Aut}(\sigma)$: this is because
this preimage is the unique closed $\cG$-orbit in $\CZ^{\on{rigid}_x}_\kappa$.

\end{document}